\documentclass[english]{amsart}
\usepackage{etex}
\usepackage{amssymb}
\usepackage{cite}
\usepackage{booktabs}
\usepackage{url}
\usepackage{hyphenat}
\usepackage{mathtools}
\usepackage[all]{xypic}
\usepackage{ifpdf}
\usepackage{hyperref}
\usepackage{color}
\usepackage[pdftex,lmargin=1in, rmargin=1in, tmargin=1in, bmargin=1in]{geometry}
\usepackage{graphicx}
\usepackage{todonotes}
\makeatletter
\providecommand\@dotsep{5}
\def\listtodoname{List of Todos}
\def\listoftodos{\@starttoc{tdo}\listtodoname}
\makeatother

\usepackage{fancyhdr}
\pagestyle{fancy}
\fancyhead{}
\newcommand{\thesec}{Introduction}
\newcommand{\sectitle}{}
\fancyhead[LO]{\thesec}
\fancyhead[RE]{Heegaard Floer Homologies}
\fancyhead[RO]{\sectitle}
\fancyhead[LE]{Robert Lipshitz }

\setlength{\footskip}{2\baselineskip}
\setlength{\headheight}{14pt}

\setlength{\marginparwidth}{1.2in}

\newread\testin

\graphicspath{{draws/}{mpdraws/}{}}
\makeatletter
\def\input@path{{}{draws/}}
\makeatother

\def\mathcenter#1{%
  \vcenter{\hbox{$#1$}}%
}

\makeatletter
\newcommand\mi@kern[1]{%
  \settowidth\@tempdima{$\mi@obj^{#1}$}
  \kern-\@tempdima
  #1
  \settowidth\@tempdima{$\mi@obj$}
  \kern\@tempdima
}

\newtoks\mi@toksp
\newtoks\mi@toksb
\DeclareRobustCommand{\manyindices}[5]{
  \def\mi@obj{#5}
  \mi@toksp\expandafter{\mi@kern{#2}}
  \mi@toksb\expandafter{\mi@kern{#1}}
  \@mathmeasure4\textstyle{#5_{#1}^{#2}}
  \@mathmeasure6\textstyle{#5_{#3}^{#4}}
  \dimen0-\wd6 \advance\dimen0\wd4
  \@mathmeasure8\textstyle{\hphantom{{}_{#1}^{#2}}#5^{\the\mi@toksp#4}_{\the\mi@toksb#3}}
  \hbox to \dimen0{}{\kern-\dimen0\box8}
}
\makeatother 

\usepackage{ifpdf}
\ifpdf
  
\else
  
\fi



\newcommand{\RR}{\mathbb R}
\newcommand{\CC}{\mathbb C}
\newcommand{\ZZ}{\mathbb Z}
\newcommand{\QQ}{\mathbb Q}

\newcommand{\FF}{\mathbb F}
\newcommand{\NN}{\mathbb N}

\newcommand{\bD}{\mathbb{D}}

\newcommand{\co}{\nobreak\mskip2mu\mathpunct{}\nonscript
  \mkern-\thinmuskip{:}\penalty300\mskip6muplus1mu\relax}


\newcommand{\bdy}{\partial}

\newcommand{\lbracket}{[}
\newcommand{\rbracket}{]}

\newcommand{\spinc}{\mathfrak s}

\DeclareMathOperator{\Sym}{Sym}

\DeclareMathOperator{\rank}{rank}

\DeclareMathOperator{\spin}{spin}
\newcommand{\SpinC}{\spin^c}

\DeclareMathOperator{\ind}{ind}



\DeclareMathOperator{\Cone}{Cone}

\newcommand\interior{\mathrm{int}}


\DeclareMathOperator{\SO}{\mathit{SO}}


\theoremstyle{plain}

\numberwithin{equation}{section}
\newtheorem{theorem}[equation]{Theorem}

\newtheorem{proposition}[equation]{Proposition}
\newtheorem{lemma}[equation]{Lemma}
\newtheorem{corollary}[equation]{Corollary}

\newtheorem{definition}[equation]{Definition}

\theoremstyle{definition}

\theoremstyle{remark}
\newtheorem{example}[equation]{Example}
\newtheorem{remark}[equation]{Remark}

\hyphenation{Thurs-ton}
\hyphenation{mo-no-poles}
\hyphenation{sur-ger-y}


\newcommand{\Kh}{\mathit{Kh}}
\newcommand{\rKh}{\widetilde{\Kh}}
\newcommand{\HF}{\mathit{HF}}

\newcommand{\HFa}{\widehat {\HF}}
\newcommand{\SFH}{\mathit{SFH}}
\newcommand{\SFC}{\mathit{SFC}}

\newcommand{\CF}{{\mathit{CF}}}
\newcommand{\CFa}{\widehat {\mathit{CF}}}

\newcommand{\HFKa}{\widehat{\mathit{HFK}}}
\newcommand{\HFLa}{\widehat{\mathit{HFL}}}
\newcommand{\x}{\mathbf x}
\newcommand{\y}{\mathbf y}
\newcommand{\z}{\mathbf z}


\newcommand\HH{\mathit{HH}}

\newcommand\Hochschild\HH


\newcommand{\alphas}{{\boldsymbol{\alpha}}}
\newcommand{\betas}{{\boldsymbol{\beta}}}

\newcommand{\cM}{\mathcal{M}}




\newcommand{\HFK}{\mathit{HFK}}

\newcommand\Id{\mathbb{I}}

\newcommand{\Field}{{\FF_2}}
\DeclareMathOperator{\nbd}{nbd}

\newcommand{\Heegaard}{\mathcal{H}}
\newcommand{\HD}{\Heegaard}

\renewcommand{\th}{^\text{th}}

\newcommand{\HB}{\mathsf{H}}




\makeatletter
\newcommand\honestalg[3]{\bigl\lbracket
\begin{smallmatrix} #1\@ifempty{#3}{}{&#3} \\ #2 \end{smallmatrix}
\bigr\rbracket}

\makeatother














\newcommand{\pt}{\mathit{pt}}

\newcommand{\Crit}{\mathit{Crit}}
\newcommand{\Foli}{\mathcal{F}}



\setlength{\marginparwidth}{.8in}

\begin{document}
\title[Heegaard Floer Homologies]{Heegaard Floer Homologies\\ Lecture Notes}

\author{Robert Lipshitz}
 \address{Department of Mathematics, Columbia University\\
   New York, NY 10027}
\date{June 23--27, 2014}

\maketitle
\tableofcontents

\newcommand{\correctionhere}{\todo{Fixed}}

\section{Introduction and overview}

\subsection{A brief overview}
Heegaard Floer homology is a family of invariants of objects
in low-dimensional topology. The first of these invariants were
introduced by Ozsv\'ath-Szab\'o: invariants of closed
3-manifolds and smooth 4-dimensional
cobordisms~\cite{OS04:HolomorphicDisks,OS06:HolDiskFour} (see also~\cite{JT:Naturality}). Later, Ozsv\'ath-Szab\'o and,
independently, Rasmussen introduced invariants of knots in
3-manifolds~\cite{OS04:Knots,Rasmussen03:Knots}. There are also
several other invariants, including invariants of contact structures~\cite{OS05:Contact},
more invariants of knots and 3-manifolds~\cite{BrDCov,AbsGraded}, and invariants of Legendrian
and transverse knots~\cite{OST,HKM09:Sutured,LOSS09:transverse}. The subject has had many applications; I will
not even try to list them here, though we will see a few in the
lectures.

In the first three of these lectures, we will focus on a
generalization of one variant of these invariants: an invariant of
sutured 3-manifolds, due to Juh\'asz, called \emph{sutured Floer
  homology}~\cite{Juhasz06:Sutured}. The main goal will be to relate
these invariants to ideas in more classical 3-manifold topology. In
particular, we will sketch a proof that sutured Floer homology detects
the genus of a knot. The proof, which is due to Juh\'asz~\cite{Juhasz08:SuturedDecomp} extending earlier results of Ozsv\'ath-Szab\'o~\cite{OS04:GenusBounds}, uses Gabai's theory of sutured manifolds and sutured hierarchies, which we will review in the first lecture.

In the fourth lecture, we go in a different direction: we will talk
about the surgery exact sequence in Heegaard Floer homology. The goal
is to sketch a (much studied) relationship between Heegaard Floer
homology and Khovanov homology: a spectral sequence due to
Ozsv\'ath-Szab\'o~\cite{BrDCov}.

\subsection{A more precise overview}
Heegaard Floer homology assigns to each closed, oriented, connected
3-manifold $Y$ an abelian group $\HFa(Y)$, and $\ZZ[U]$-modules
$\HF^+(Y)$, $\HF^-(Y)$ and $\HF^\infty(Y)$. These are the homologies
of chain complexes $\CFa(Y)$, $\CF^+(Y)$, $\CF^-(Y)$ and
$\CF^\infty(Y)$. These chain complexes are related by short exact sequences
\[
0 \longrightarrow \CF^-(Y) \stackrel{\cdot U}{\longrightarrow} \CF^\infty(Y) \longrightarrow \CF^+(Y) \longrightarrow 0
\]
\[
  0 \longrightarrow \CF^-(Y) \stackrel{\cdot U}{\longrightarrow} \CF^-(Y) \longrightarrow \CFa(Y) \longrightarrow 0
\]
\[
  0 \longrightarrow \CFa(Y) \longrightarrow \CF^+(Y) \stackrel{\cdot U}{\longrightarrow} \CF^+(Y) \longrightarrow 0  
\]
which, of course, induce long exact sequences in homology. In
particular, either of $\CF^+(Y)$ or $\CF^-(Y)$ determines
$\CFa(Y)$. (The complexes $\CF^+(Y)$ and $\CF^-(Y)$ also have
equivalent information, though this does not quite follow from what
we have said so far.) These invariants are defined
in~\cite{OS04:HolomorphicDisks}. (Some students report finding it
helpful to read\cite{Lipshitz06:CylindricalHF} in conjunction
with~\cite{OS04:HolomorphicDisks}.) It is now known, by work of
Hutchings~\cite{Hutchings02:ECH}, Hutchings-Taubes~\cite{HutchingsTaubes07:GluingI,HutchingsTaubes09:GluingII}, Taubes\cite{Taubes10:ECH-SW1,Taubes10:ECH-SW2,Taubes10:ECH-SW3,Taubes10:ECH-SW4,Taubes10:ECH-SW5}, and Kutluhan-Lee-Taubes\cite{KutluhanLeeTaubes:HFHMI,KutluhanLeeTaubes:HFHMII,KutluhanLeeTaubes:HFHMIII,KutluhanLeeTaubes:HFHMIV,KutluhanLeeTaubes:HFHMV} or Colin-Ghiggini-Honda\cite{ColinGhigginiHonda11:HF-ECH-1,ColinGhigginiHonda11:HF-ECH-2,ColinGhigginiHonda11:HF-ECH-3},
that these invariants correspond to different variants of Kronheimer-Mrowka's Seiberg-Witten Floer
homology groups~\cite{KronheimerMrowka}.

Roughly, smooth, compact, connected 4-dimensional cobordisms between
connected 3-manifolds induce chain maps on $\CFa$, $\CF^\pm$ and
$\CF^\infty$, and composition of cobordisms corresponds to composition
of maps. From the maps on $\CF^\pm$ and the exact sequences above, one
can recover the Seiberg-Witten invariant, or at least something very
much like it~\cite{OS06:HolDiskFour}. Note, in particular,
that $\CFa$ does not have enough information to recover the
Seiberg-Witten invariant.

There is an extension of the Heegaard Floer homology groups to nullhomologous knots
in 3-manifolds, called \emph{knot Floer
  homology}~\cite{OS04:Knots,Rasmussen03:Knots}. Given a knot $K$ in a
$3$-manifold $Y$ there is an induced filtration of $\CFa(Y)$,
$\CF^+(Y)$, and so on. In particular, we can define the \emph{knot
  Floer homology groups} $\HFKa(Y,K)$, the homology of the associated
graded complex to the filtration on $\CFa(Y)$. (So, there is a
spectral sequence from $\HFKa(Y,K)$ to $\HFa(Y)$.)

The gradings in the subject are quite subtle. The chain complexes
$\CFa(Y)$, $\CF^+(Y)$, \dots, decompose as direct sums according to
\emph{$\SpinC$-structures} on $Y$, i.e.,
\[
\CFa(Y)=\bigoplus_{\spinc\in\SpinC(Y)}\CFa(Y,\spinc).
\]
(We will discuss $\SpinC$ structures more in Section~\ref{sec:spinc}.)
Each of the $\CFa(Y,\spinc)$ is relatively graded by some $\ZZ/n\ZZ$
(where $n$ is the divisibility of $c_1(\spinc)$). In particular, if
$c_1(\spinc)=0$ (i.e., $\spinc$ is \emph{torsion}) then
$\CFa(Y,\spinc)$ has a relative $\ZZ$ grading. Similarly, $\HFKa(Y,K)$
decomposes as a direct sum of groups, one per relative $\SpinC$
structure on $(Y,K)$.

In the special case that $Y=S^3$, there is a canonical identification
$\SpinC(S^3,K)\cong \ZZ$, and each $\HFKa(Y,K,\spinc)$ in fact has an
absolute $\ZZ$-grading. That is, $\HFKa(Y,K)$ is a bigraded abelian
group. We will write
$\HFKa(S^3,K)=\HFKa(K)=\bigoplus_{i,j}\HFKa_i(K,j)$, where $j$ stands
for the $\SpinC$ grading. The grading $j$ is also called the
\emph{Alexander grading}, because
\[
\sum_{i,j}(-1)^it^j\rank\HFKa_i(K,j)=\Delta_K(t),
\]
the (Conway normalized) Alexander polynomial of $K$.

The breadth of the Alexander polynomial $\Delta_K(t)$, or equivalently
the degree of the symmetrized Alexander polynomial, gives a lower
bound on the genus $g(K)$ of $K$ (i.e., the minimal genus of any Seifert
surface for $K$). One of the main goals of these lectures will be to
sketch a proof of the following refinement:
\begin{theorem}\cite{OS04:GenusBounds}\label{thm:knot-genus}
  Given a knot $K$ in $S^3$,
  $
  g(K)=\max\{j\mid \bigl(\bigoplus_i \HFKa_i(K,j)\bigr)\neq 0\}.
  $
\end{theorem}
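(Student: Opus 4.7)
The plan is to prove the two inequalities separately. The easier direction, $\max\{j \mid \bigoplus_i \HFKa_i(K,j) \neq 0\} \leq g(K)$, follows from constructing a Heegaard diagram for $(S^3,K)$ adapted to a minimal genus Seifert surface $F$ (for instance via a Morse function on $S^3\setminus\nu(K)$ compatible with $F$), and observing by direct inspection that every generator of $\CFKa$ has Alexander grading at most $g(F) = g(K)$. This is the chain-level refinement of the classical bound $g(K)\geq \tfrac{1}{2}\mathrm{breadth}(\Delta_K)$.

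The reverse inequality is the heart of the matter, and I would follow Juh\'asz's sutured-decomposition approach. The first step is a reinterpretation: view the knot exterior $M(K) = S^3\setminus\nu(K)$ with two oppositely oriented meridional sutures as a balanced sutured manifold, and invoke the identification $\SFH(M(K),\Gamma_\mu)\cong \HFKa(K)$, under which the Alexander grading $j$ corresponds to the relative $\SpinC$ grading. It therefore suffices to show that the summand $\SFH(M(K),\Gamma_\mu,\spinc_{\mathrm{top}})$ in the extremal $\SpinC$ structure is nonzero.

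The second step uses Gabai's theorem: a taut Seifert surface $F$ of minimal genus initiates a finite sutured manifold hierarchy $(M_0,\Gamma_0)\rightsquigarrow (M_1,\Gamma_1)\rightsquigarrow\cdots\rightsquigarrow (M_N,\Gamma_N)$ whose initial term is $(M(K),\Gamma_\mu)$, whose first decomposing surface is $F$, and whose terminal term is a product sutured manifold $F'\times I$ (with sutures $\partial F'\times\{1/2\}$). By a direct Heegaard diagram computation, $\SFH(F'\times I,\partial F'\times\{1/2\})\cong \ZZ$, generated by the unique intersection point in a trivial diagram. The key input is Juh\'asz's surface decomposition theorem: if $(M,\Gamma)\rightsquigarrow(M',\Gamma')$ is a decomposition along a sufficiently ``nice'' surface $S$, then $\SFH(M',\Gamma')$ sits as a direct summand of $\SFH(M,\Gamma)$, consisting precisely of the $\SpinC$ structures that are extremal with respect to $S$. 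Applied inductively to each step of the hierarchy, with $\SpinC$ structures tracked at each stage, this exhibits $\ZZ\cong \SFH(M_N,\Gamma_N)$ as a summand of $\SFH(M(K),\Gamma_\mu,\spinc_{\mathrm{top}})$, which therefore is nonzero.

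The main obstacle is the sutured decomposition theorem itself. To prove it one must produce a Heegaard diagram for $(M,\Gamma)$ adapted to $S$, in which the intersection points and pseudo-holomorphic curves that contribute to the extremal $\SpinC$ summand are all visible on one side of $S$ and so descend to a diagram for $(M',\Gamma')$. The technical device here is the notion of a \emph{surface in normal form with respect to the sutures}; a general-position argument isotopes any decomposing surface to this form without changing the resulting sutured manifold, and then the relevant generators can be identified by a careful $\SpinC$-grading computation. Combined with the hierarchy and with the computation $\SFH(F'\times I)=\ZZ$, this completes the lower bound and hence the theorem.
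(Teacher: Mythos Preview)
Your proposal is correct and follows essentially the same route as the paper: the hard direction is proved exactly as you describe, via Gabai's hierarchy together with Juh\'asz's surface decomposition theorem (Theorem~\ref{thm:Floer-decomp}) and the computation $\SFH(\text{product})\cong\Field$. One minor correction on the easy direction: in the paper's adapted diagram the Alexander grading is $|\x\cap P|-g(F)$, so direct inspection yields only the \emph{lower} bound $j\geq -g(F)$, and the upper bound $j\leq g(F)$ is then obtained by invoking the conjugation symmetry $\HFKa_i(K,j)\cong\HFKa_{i-2j}(K,-j)$ rather than by reading it off the generators (correspondingly, the paper proves nonvanishing at $j=-g(K)$ rather than at $j=+g(K)$).
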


Rather than giving the original proof of Theorem~\ref{thm:knot-genus},
we will give a proof using an extension of $\HFa$ and $\HFKa$, due to
Juh\'asz, called \emph{sutured Floer homology}. Sutured manifolds were
introduced by Gabai in his work on foliations, fibrations, the
Thurston norm, and knot
genus~\cite{Gabai83:foliations,Gabai84:knot-foliations,Gabai86:arborescent,Gabai87:foliations-2-3};
we will review some aspects of this theory in the first lecture.
Sutured Floer homology associates to each sutured manifold
$(Y,\Gamma)$ satisfying certain conditions (called being
\emph{balanced}) a chain complex $\SFC(Y,\Gamma)$ whose homology
$\SFH(Y,\Gamma)$ is an invariant of the sutured manifold. These chain
complexes behave in a particular way under Gabai's \emph{surface
  decompositions}, leading to a proof of Theorem~\ref{thm:knot-genus}.

In the last lecture, we turn to a different topic: the behavior of
Heegaard Floer homology under knot surgery. The goal is to relate
these lectures to the lecture series on Khovanov homology. In
particular, we will sketch the origins of Ozsv\'ath-Szab\'o's spectral
sequence $\rKh(m(K))\Rightarrow \HFa(\Sigma(K))$ from the (reduced)
Khovanov homology of the mirror of $K$ to the Heegaard Floer homology
of the branched double cover of $K$~\cite{BrDCov}.

\subsection{References for further reading}
There are a number of survey articles on Heegaard Floer homology. Three
by Ozsv\'ath-Szab\'o~\cite{OS05:EMS-survey,OS06:Clay1,OS06:Clay2} give
nice introductions to the Heegaard Floer invariants of 3- and
4-manifolds and knots. Juh\'asz's recent survey~\cite{Juhasz:Survey}
contains an introduction to sutured Floer homology, which is the main
subject of these lectures. There are also some more focused surveys of
other recent developments~\cite{Manolescu:knot-survey,LOT:tour}.

Sutured Floer homology, as we will discuss it, is developed in a pair
of papers by
Juh\'asz~\cite{Juhasz06:Sutured,Juhasz08:SuturedDecomp}. For a
somewhat different approach to relating sutured manifolds and Floer
theory, see the work of Ni (starting perhaps with~\cite{Ni09:FiberedMfld}).

\subsection*{Acknowledgments} I thank the participants and organizers
of the 2014 SMF summer school ``Geometric and Quantum Topology in
Dimension 3'' for many corrections to an earlier draft. I also thank
Zhechi Cheng, Andr\'as Juh\'asz, Peter Ozsv\'ath, Dylan Thurston, and Mike Wong for
further suggestions and corrections. I was partly supported by NSF Grant DMS-1149800, and partly by the Soci\'et\'e Mathematique de France.

\section{Sutured manifolds, foliations and sutured hierarchies}
\renewcommand{\thesec}{Lecture 1}
\renewcommand{\sectitle}{Sutured manifolds, foliations and sutured hierarchies}
\subsection{The Thurston norm and foliations}
\begin{definition}
  Given a knot $K\subset S^3$, the \emph{genus of $K$} is the minimal
  genus of any Seifert surface for $K$ (i.e., of any embedded surface
  $F\subset S^3$ with $\bdy F=K$).
\end{definition}

Thurston found a useful generalization of this notion to arbitrary
3-manifolds and, more generally, to link complements in arbitrary
3-manifolds:
\begin{definition}
  For a $3$-manifold $Y$ with boundary $\bdy Y$ a disjoint union of
  tori, the \emph{Thurston norm}
  \[
  x\co H_2(Y,\bdy Y)\to \ZZ
  \]
  is defined as follows. Given a compact, oriented surface $F$ (not
  necessarily connected, possibly with boundary) define the
  \emph{complexity of $F$} to be
  \[
  x(F)=\sum_{\chi(F_i)\leq 0} |\chi(F_i)|,
  \]
  where the sum is over the connected components $F_i$ of $F$.
  
  Given an element $h\in H_2(Y,\bdy Y)$ and a surface $F\subset
  Y$ with $\bdy F\subset \bdy Y$ we say that \emph{$F$ represents $h$}
  if the inclusion map sends the fundamental class of $F$ in
  $H_2(F,\bdy F)$ to $h$. Define
  \[
  x(h)=\min\{x(F)\mid F\text{ an embedded surface representing }h\}.
  \]
\end{definition}

For this definition to make sense, we need to know the surface $F$
exists:
\begin{lemma}
  Any element $h\in H_2(Y,\bdy Y)$ is represented by some embedded surface $F$.
\end{lemma}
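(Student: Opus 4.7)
The plan is to produce $F$ as the preimage of a regular value under a map $Y\to S^1$, using Poincaré--Lefschetz duality to convert the homology class $h$ into such a map.

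First I would invoke Poincaré--Lefschetz duality for the compact oriented 3-manifold with boundary $Y$ to identify
\[
H_2(Y,\bdy Y;\ZZ)\cong H^1(Y;\ZZ).
\]
Next, since $S^1$ is an Eilenberg--MacLane space $K(\ZZ,1)$, the group $H^1(Y;\ZZ)$ is canonically isomorphic to the set of homotopy classes of maps $[Y,S^1]$, via pullback of the generator of $H^1(S^1;\ZZ)$. Thus to $h$ we associate a continuous map $f_h\co Y\to S^1$, well-defined up to homotopy.

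Now I would make $f_h$ transverse to a regular value. Homotope $f_h$ to a smooth map; then by Sard's theorem (and standard transversality), after a further small homotopy we may assume $f_h$ is smooth, that $f_h|_{\bdy Y}$ is smooth, and that some point $p\in S^1$ is a regular value of both $f_h$ and $f_h|_{\bdy Y}$. Set $F\coloneqq f_h^{-1}(p)$. Then $F$ is a properly embedded surface in $Y$ with $\bdy F=F\cap\bdy Y=(f_h|_{\bdy Y})^{-1}(p)$ a 1-submanifold of $\bdy Y$, so $F$ is an embedded surface in $Y$ with boundary in $\bdy Y$, and it inherits an orientation from the orientations of $Y$ and of $S^1$.

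Finally, I would check that $F$ represents $h$. This is the standard computation that the Poincaré--Lefschetz dual of the fundamental class $[F,\bdy F]\in H_2(Y,\bdy Y)$ is exactly $f_h^*$ applied to the generator of $H^1(S^1)$, i.e., our class in $H^1(Y)$ corresponding to $h$; equivalently, $[F]$ is the intersection of $[Y,\bdy Y]$ with $f_h^*(\mathrm{PD}[p])$. The main (very minor) obstacle is the boundary behavior: one must ensure the transversality argument is carried out for the pair $(Y,\bdy Y)$ so that $F$ is properly embedded rather than having $\bdy F$ tangent to $\bdy Y$, but this is handled by applying transversality to $f_h|_{\bdy Y}$ first and then extending.
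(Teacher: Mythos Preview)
Your proposal is correct and follows essentially the same approach as the paper: Poincar\'e--Lefschetz duality to pass to $H^1(Y)$, representing the cohomology class by a map to $K(\ZZ,1)=S^1$, and taking the preimage of a regular value. You have simply filled in more of the details (transversality on the boundary, orientation, verification that the preimage has the right class) than the paper's sketch, which just cites Thurston for these.
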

\begin{proof}[Idea of Proof]
  The class $h$ is Poincar\'e dual to a class in $H^1(Y)$, which in
  turn is represented by a map $f_h\co Y\to K(\ZZ,1)=S^1$. The
  preimage of a regular value of $f_h$ represents $h$.
  See for instance~\cite[Lemma 1]{Thurston86:norm} for more details.
\end{proof}

\begin{proposition}
  If $(Y,\bdy Y)$ has no essential spheres ($Y$ is \emph{irreducible}) or
  disks ($\bdy Y$ is \emph{incompressible}) then $x$ defines a
  pseudo-norm on $H_2(Y,\bdy Y)$ (i.e., a norm except for the
  non-degeneracy axiom). If moreover $Y$ has no essential annuli or
  tori ($Y$ is \emph{atoroidal}) then $x$ defines a norm on $H_2(Y,\bdy Y)$,
  and induces a norm on $H_2(Y,\bdy Y;\QQ)$.
\end{proposition}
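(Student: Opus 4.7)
The plan is to verify the three pseudo-norm properties (homogeneity, triangle inequality, and non-negativity) on the integer lattice, then use atoroidality for non-degeneracy, and finally extend by scaling to rational coefficients.

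I would begin with a normalization lemma: every class $h\in H_2(Y,\bdy Y)$ has a representative with no sphere or disk components. A sphere component bounds a ball by irreducibility and can be discarded without altering $[F]$ or $x(F)$; a disk component has boundary bounding a disk in $\bdy Y$ by incompressibility, and together these two disks form a sphere bounding a ball, allowing the disk to be isotoped into $\bdy Y$ and removed. After normalization every component has $\chi\leq 0$, so $x(F)=-\chi(F)$, reducing all later work to the ordinary Euler characteristic of normalized representatives.

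For homogeneity $x(nh)=|n|\,x(h)$: the inequality $x(nh)\leq|n|\,x(h)$ is immediate from $|n|$ parallel copies of a minimizer (with orientation reversed if $n<0$). The reverse inequality is more delicate: classifying a normalized minimizer $F$ for $nh$ by a map $f\co Y\to S^1$ and choosing $g\co Y\to S^1$ representing $\mathrm{PD}(h)$, the equality $[f]=|n|[g]$ in $H^1(Y;\ZZ)$ allows one to compare $F$ with the auxiliary representative $(z^{|n|}\circ g)^{-1}(p)=\bigsqcup_{i=1}^{|n|}g^{-1}(p_i)$ of $nh$; each $g^{-1}(p_i)$ represents $h$, and a pigeonhole argument on their Euler characteristics yields a representative of $h$ with $-\chi\leq x(nh)/|n|$, giving $|n|\,x(h)\leq x(nh)$. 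The triangle inequality is proved via oriented double curve sum: put normalized minimizers $F_1,F_2$ for $h_1,h_2$ in general position and smooth $F_1\cap F_2$ using the orientations to produce an oriented embedded surface $F_1+F_2$ representing $h_1+h_2$ with $\chi(F_1+F_2)=\chi(F_1)+\chi(F_2)$. The essential technical step---and where irreducibility and $\bdy$-incompressibility enter most substantively---is that $F_1,F_2$ may be isotoped so that $F_1+F_2$ has no sphere or disk components: any such trivial component would bound a ball (or cobound one with $\bdy Y$) through which one of $F_1,F_2$ can be isotoped to reduce $|F_1\cap F_2|$, and iterating this finite process eliminates all trivial components. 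Granting this, $x(h_1+h_2)\leq x(F_1+F_2)=-\chi(F_1+F_2)=x(h_1)+x(h_2)$.

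For non-degeneracy under atoroidality, suppose $x(h)=0$ and take a normalized representative $F$ with $-\chi(F)=0$. Every component has Euler characteristic zero, hence is a torus or annulus; atoroidality rules out essential tori and annuli, so every component is inessential and thus null-homologous in $H_2(Y,\bdy Y)$, forcing $h=0$. To pass to rational coefficients, define $x(q):=x(nq)/n$ for any positive integer $n$ with $nq\in H_2(Y,\bdy Y;\ZZ)$; integral homogeneity makes this well-defined, the triangle inequality gives Lipschitz continuity, and $x$ extends uniquely to a norm on $H_2(Y,\bdy Y;\QQ)$. The main obstacle throughout is the triangle inequality, specifically the claim that the oriented double curve sum can be arranged free of trivial components; this is the one step not essentially formal, and it is precisely where the geometric hypotheses on $Y$ are genuinely required.
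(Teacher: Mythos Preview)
Your overall architecture matches the paper's sketch: the substantive content is the oriented cut-and-paste (``double curve sum'') for the triangle inequality, together with the preliminary elimination of sphere and disk components so that $x(F)=-\chi(F)$. Your treatment of non-degeneracy under atoroidality and the extension to $\QQ$ are fine and simply more explicit than the paper's one-line reference to Thurston.

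There is, however, a genuine gap in your homogeneity argument. You produce an auxiliary representative $\bigsqcup_{i=1}^{|n|} g^{-1}(p_i)$ of $nh$ and apply pigeonhole to its pieces. That gives you a representative of $h$ with complexity at most $\frac{1}{|n|}\sum_i\bigl(-\chi(g^{-1}(p_i))\bigr)$. But nothing you have written relates $\sum_i\bigl(-\chi(g^{-1}(p_i))\bigr)$ to $x(nh)$: the $g^{-1}(p_i)$ are just \emph{some} representatives of $h$, built from $g$, and their total complexity could be enormous compared with that of the minimizer $F$. Having $[f]=|n|[g]$ in $H^1(Y;\ZZ)$ tells you $F$ and $\bigsqcup_i g^{-1}(p_i)$ are homologous, not that their complexities are comparable. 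So the pigeonhole step, as stated, proves only $x(h)\le x(h)$.

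What is actually needed --- and what the paper gestures at --- is to work directly with a minimizer $F$ for $nh$ (with $h$ primitive) and argue that $F$ itself can be arranged to split as $n$ surfaces each representing $h$. One does this by making $F$ transverse to a single regular fiber $G=g^{-1}(p)$ and surgering $F$ along $F\cap G$ exactly as in your triangle-inequality step; this does not change $\chi$, and after discarding trivial pieces $F$ becomes disjoint from $G$ and hence lies in $Y$ cut along $G$, where it is forced (by the primitivity of $h$) to fall into $n$ homologous pieces. Pigeonhole then applies to \emph{those} pieces, giving $x(h)\le x(F)/n = x(nh)/n$ as desired.
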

\begin{proof}[Idea of Proof]
  The main points to
  check are that:
  \begin{enumerate}
  \item $x(n\cdot h)=n\cdot x(h)$ for $n\in\NN$. 
  \item $x(h+k)\leq x(h)+x(k)$.
  \end{enumerate}
  For the first point, a little argument shows that a surface
  representing $n\cdot h$ (with $h$ indivisible) necessarily has $n$
  connected components, each representing $h$. The second is a little
  more complicated. Roughly, one takes surfaces representing $h$ and
  $k$ and does surgery on their circles and arcs of intersection to
  get a new surface representing $h+k$ without changing the Euler
  characteristic. (More precisely, one first has to eliminate
  intersections which are inessential on both surfaces, as doing
  surgery along them would create disjoint $S^2$ or $\bD^2$
  components.) See~\cite[Theorem 1]{Thurston86:norm} for details. 
\end{proof}

\begin{example}
  If $Y=S^3\setminus\nbd(K)$ is the exterior of a knot then
  $H_2(Y,\bdy Y)\cong \ZZ$ and surfaces representing a generator for
  $H_2(Y,\bdy Y)$ are Seifert surfaces for $K$. The Thurston norm of a
  generator is given by $2g(K)-1$ (if $K$ is not the unknot).
\end{example}

\begin{example}
  Consider $Y=S^1\times \Sigma_g$, for any $g>0$. Fix a collection of curves
  $\gamma_i$, $i=1,\dots,2g$, in $\Sigma$ giving a basis for
  $H_1(\Sigma)$. Then $H_2(Y)\cong \ZZ^{2g+1}$, with basis (the
  homology classes represented by) $S^1\times \gamma_i$,
  $i=1,\dots,2g$, and $\{\pt\}\times\Sigma$. We have $x([S^1\times
  \gamma_i])=0$, from which it follows (why?) that $x$ is determined
  by $x([\{pt\}\times\Sigma])$. One can show using elementary
  algebraic topology that $x([\{pt\}\times\Sigma])=2g-2$; see
  Exercise~\ref{exercise:S1-times-Sigma}.
\end{example}

\begin{remark}
  A norm is determined by its unit ball. The Thurston norm ball turns
  out to be a polytope defined by inequalities with integer
  coefficients~\cite[Theorem 2]{Thurston86:norm}.
\end{remark}

\emph{A priori}, the Thurston norm looks impossible to compute in
general. Remarkably, however, it can be understood. The two key
ingredients are foliations, which we discuss now, and a decomposition
technique, due to Gabai, which we discuss next.
\begin{definition}
  A \emph{smooth, codimension-1 foliation $\Foli$} of $M$ is a
  collection of disjoint, codimension-1 immersed submanifolds
  $\{N_j\subset M\}_{j\in J}$ so that each immersion is injective and
  for any $x\in M$ there is a neighborhood $U\ni x$ and a
  diffeomorphism $\phi\co U\to \RR^n$ so that for each $t\in\RR$,
  $\phi^{-1}(\RR^{n-1}\times\{t\})\subset N_j$ for some $j=j(t)$.
  The $N_j$ are called the \emph{leaves} of the foliation.
\end{definition}

We will only be interested in smooth, codimension-1 foliations, so we
will refer to these simply as foliations. (Actually, there are good
reasons to consider non-smooth foliations in this setting. Higher
codimension foliations are also, of course, interesting.)

In a small enough neighborhood of any point, the $N_j$ look like pages
of a book, though each $N_j$ may correspond to many pages. The
standard examples are foliations of the torus
$T^2=[0,1]\times[0,1]/\sim$ by the curves $\{y=m x+b\}$ for fixed
$m\in\RR$ and $b$ allowed to vary. If $m$ is rational then the leaves
are circles. If $m$ is irrational then the leaves are immersed copies
of $\RR$.

The tangent spaces to the leaves $N_j$ in a foliation $\Foli$ of $M^n$
define an $(n-1)$-plane field in $TM$. This is the \emph{tangent
space to $\Foli$}, which we will write as $T\Foli$. An \emph{orientation} of
$\Foli$ is an orientation of $T\Foli$, and a \emph{co-orientation} is an
orientation of the orthogonal complement $T\Foli^\perp$ of
$T\Foli$. Since we are only interested in oriented $3$-manifolds, the
two notions are equivalent.

A curve is \emph{transverse to $\Foli$} if it is transverse to $T\Foli$.

\begin{definition}
  A foliation $\Foli$ of $M$ is called \emph{taut} if there is a curve
  $\gamma$ transverse to $\Foli$ such that $\gamma$ intersects every
  leaf of $\Foli$.
\end{definition}

\begin{theorem}\label{thm:norm-and-foli}\cite[Corollary 2, p. 119]{Thurston86:norm}
  Let $\Foli$ be a taut foliation of $Y$ so that for every component $T$ of
  $\bdy Y$ either:
  \begin{itemize}
  \item $T$ is a leaf of $\Foli$ or
  \item $T$ is transverse to $\Foli$ and $\Foli\cap T$ is taut in $T$.
  \end{itemize}
  Then every compact leaf of $\Foli$ is genus minimizing.
\end{theorem}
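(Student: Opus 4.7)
The plan is to use the Euler class of the tangent plane field to $\Foli$ as the main invariant, following the strategy of Thurston. Write $e = e(T\Foli) \in H^2(Y)$ for the Euler class of the oriented $2$-plane field tangent to the foliation. The two-step strategy is: first, compute $\langle e, [L]\rangle = \chi(L)$ exactly for the compact leaf $L$; second, prove for any embedded surface $S$ representing the same relative class the inequality $|\langle e, [S]\rangle| \leq x(S)$. Together these give $x(L) = -\chi(L) \leq x(S)$, the desired minimality.

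To set up the pairing I would first lift $e$ to a relative class in $H^2(Y,\partial Y)$, which requires a distinguished nowhere-zero section of $T\Foli$ along $\partial Y$. The boundary hypothesis supplies exactly this data: on a component of $\partial Y$ that is a leaf, any nowhere-zero tangent vector field will do; on a transverse component $T$ with $\Foli\cap T$ taut, the tautness of this $1$-dimensional foliation on the torus $T$ provides a nonvanishing section of $T\Foli\cap TT \subset T\Foli|_T$. With this fixed, $\langle e,[L]\rangle$ and $\langle e,[S]\rangle$ are well-defined and equal (since $[L]=[S]$ in $H_2(Y,\partial Y)$), and the first equals $\chi(L)$ because $T\Foli|_L = TL$, which is just Poincar\'e--Hopf on $L$ applied to a section coming from the boundary data.

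The heart of the argument is the inequality $|\langle e, [S]\rangle| \leq x(S)$. After discarding sphere and disk components of $S$ (which contribute $0$ to $x$ and which in the Thurston-norm setting can be removed without changing the homology class), I would perturb $S$ so that $TS$ and $T\Foli$ are generic along $S$; then $T\Foli \cap TS$ is a line field on $S$ with isolated singularities, and Poincar\'e--Hopf expresses $\langle e, [S]\rangle$ as a sum of local indices. Tautness of $\Foli$ enters decisively here via Sullivan's theorem on foliation cycles, producing a closed $2$-form $\omega$ on $Y$ strictly positive on $T\Foli$; the positivity of $\omega$ combined with Stokes' theorem (made applicable by the boundary hypothesis) constrains the signs of the local indices on each component $S_i$ of $S$, yielding $|\langle e,[S_i]\rangle| \leq -\chi(S_i)$ for every component of non-positive Euler characteristic, and summing gives the global bound.

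The main obstacle is this last step: extracting the sharp sign control from tautness. Packaging the positive closed form $\omega$ so that its integral over any embedded surface is bounded by the surface's Thurston complexity --- equivalently, giving a direct index argument controlled by a closed transversal meeting every leaf --- is the technical heart of the result. Once the Euler-class inequality is in hand, combining it with $\langle e,[L]\rangle = \chi(L)$ and the equality of pairings on $[L]=[S]$ yields $x(L) \leq x(S)$ for every competitor $S$, so every compact leaf of a taut foliation realizes the Thurston norm in its homology class and is therefore genus minimizing.
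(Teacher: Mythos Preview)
The paper does not actually prove this theorem; it quotes the statement from Thurston and remarks only that ``(The proof is not so easy.)'' Your Euler-class outline is indeed Thurston's strategy, and the first two steps---lifting $e(T\Foli)$ to a relative class using the boundary hypothesis, and evaluating it on a compact leaf to get $\chi(L)$---are set up correctly.

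The obstacle you flag is a genuine gap, and the route through Sullivan's positive closed $2$-form does not obviously deliver the pointwise sign control you want: integrating $\omega$ over $S$ gives a number, not a bound on indices of a line field. Thurston's actual argument for $|\langle e,[S]\rangle|\le -\chi(S)$ is more hands-on. Put $S$ in general position with respect to $\Foli$ so that the induced singular foliation on $S$ has only Morse singularities (centers and saddles). Poincar\'e--Hopf gives $\chi(S)=c-s$, while $\langle e,[S]\rangle$ is a signed count of the same singularities, the sign recording whether the coorientation of $\Foli$ agrees with the normal to $S$ at the tangency. The key move is to isotope $S$ so as to eliminate all centers; once only saddles remain, $|\langle e,[S]\rangle|\le s=-\chi(S)$ is immediate from the triangle inequality. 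Tautness is used exactly here: it rules out Reeb components, and in their absence a center (a local tangency where nearby leaves of $\Foli|_S$ are small circles) can be cancelled against a saddle, or pushed off $S$ along leaves, without introducing new singularities. So your invocation of tautness is at the right point in the argument, but the mechanism is a general-position/cancellation argument on the induced singular foliation of $S$, not an integral inequality coming from a calibrating $2$-form.
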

(The proof is not so easy.)

As we discuss next, Gabai showed that, at least in principle,
Theorem~\ref{thm:norm-and-foli} can always be used to determine the
Thurston norm.

\subsection{Sutured manifolds}
\begin{definition}
  A \emph{sutured manifold} is an oriented $3$-manifold $Y$ together with a
  decomposition of $\bdy Y$ into three parts (codimension-0
  submanifolds with boundary): the bottom part $R_-$, the top part
  $R_+$, and the vertical part $\gamma$. This decomposition is
  required to satisfy the properties that:
  \begin{enumerate}
  \item Every component of $\gamma$ is either an annulus or a torus.
  \item $\bdy R_+\cap \bdy R_-=\emptyset$ (so $\bdy \gamma=\bdy
    R_+\amalg \bdy R_-$).
  \item Each annulus in $\gamma$ shares one boundary component with
    $R_+$ and one boundary component with $R_-$.
  \end{enumerate}
  (See~\cite[Section 2]{Gabai83:foliations}.)

  Let $T(\gamma)$ denote the union of the toroidal components of
  $\gamma$ and $A(\gamma)$ the union of the annular components of
  $\gamma$. We will often denote a sutured manifold by $(Y,\gamma)$.

  In a sutured manifold, we give $R_+$ the (outward-normal-first)
  orientation induced on $\bdy Y$ and $R_-$ the orientation as $-\bdy
  Y$, i.e., the opposite of the boundary orientation, so both $\bdy
  R_+$ and $\bdy R_-$ induce the same orientations on the cores of the
  annuli.

  A sutured manifold is called \emph{taut} if $Y$ is irreducible
  (every $S^2$ bounds a $\bD^3$) and $R_+$ and $R_-$ are
  Thurston norm-minimizing in their homology classes (in $H_2(Y,\gamma)$).

  A sutured manifold is called \emph{balanced} if:
  \begin{enumerate}
  \item $T(\gamma)=\emptyset$.
  \item $R_+$ and $R_-$ have no closed components.
  \item $Y$ has no closed components.
  \item $\chi(R_+)=\chi(R_-)$ for each connected component of $Y$.
  \end{enumerate}
  Let $\Gamma$ denote the cores of the annular components of
  $\gamma$. Orient $\Gamma$ as the boundary of $R_+$. Then for a balanced sutured manifold, $(Y,\Gamma)$
  determines the whole sutured structure, so we may refer to
  $(Y,\Gamma)$ as a sutured manifold.
\end{definition}

\begin{example}
  Given a surface $R$ with boundary, consider $Y=[0,1]\times R$. Make
  this into a sutured manifold by defining $R_+=\{1\}\times R$,
  $R_-=\{0\}\times R$ and $\gamma=[0,1]\times\bdy R$. Sutured
  manifolds of this form are called \emph{product sutured manifolds}.

  Product sutured manifolds are taut and, if $R$ has no closed
  components, balanced.
\end{example}

\begin{example}
  Let $Y$ be a closed, connected 3-manifold. We can view $Y$ as a
  somewhat trivial example of a sutured 3-manifold. This sutured
  3-manifold may or may not be taut, but is not balanced.

  We can also delete a ball $\bD^3$ from $Y$ and place, say, a single
  annular suture on the resulting $S^2$ boundary. (So, $R_+=\bD^2$,
  $R_-=\bD^2$, and $\gamma=[0,1]\times S^1$.) This sutured manifold is
  not taut (unless $Y=S^3$)---a sphere parallel to the boundary does
  not bound a disk---but it is balanced.
\end{example}

\begin{example}\label{eg:sutured-knot-complement}
  Let $Y$ be a closed, connected $3$-manifold and let $K\subset Y$ be
  a knot. Consider $Y\setminus \nbd(K)$, the exterior of $K$. We can
  view this as a sutured manifold by defining $\gamma$ to be the whole
  torus boundary. This sutured manifold is not balanced.

  More relevant to our later constructions, we can define a balanced
  sutured manifold by letting $\Gamma$ consist of $2n$ meridional
  circles, so $R_+$ and $R_-$ each consists of $n$ annuli.  See
  Figure~\ref{fig:sutured-decomp}.  (In my head, this looks like a
  knotted sea monster biting its own tail: $R_+$ is the part above the
  water.)
\end{example}

\begin{definition}
  A foliation $\Foli$ on $Y$ is \emph{compatible with $\gamma$} if
  \begin{enumerate}
  \item $\Foli$ is transverse to $\gamma$.
  \item $R_+$ and $R_-$ are unions of leaves of $\Foli$, and the
    orientations of these leaves agree with the orientations of $R_\pm$.
  \end{enumerate}
  (I think ``compatible'' is not a standard term.)

  A foliation $\Foli$ on $(Y,\gamma)$ is \emph{taut} if
  \begin{enumerate}
  \item $\Foli$ is compatible with $\gamma$.
  \item $\Foli$ is taut.
  \item For each component $S$ of $\gamma$, $\Foli\cap S$ is taut, as
    a foliation of $S$.
  \end{enumerate}
\end{definition}

\begin{example}
  Every product sutured manifold admits an obvious taut foliation,
  where the leaves are $\{t\}\times R$.
\end{example}

\begin{definition}
  We call a sutured manifold \emph{rational homology trivial} or
  \emph{RHT} if the homology group $H_2(Y)$
  vanishes. (This is not a standard term.)
\end{definition}

\begin{example}
  A knot complement in $S^3$ is RHT. If $Y$ is a closed $3$-manifold
  then $Y\setminus \bD^3$ is RHT if and only if $Y$ is a rational
  homology sphere.
\end{example}

\subsection{Surface decompositions and Gabai's theorem}
\begin{definition}\label{def:decompose}\cite[Definition 3.1]{Gabai83:foliations}
  A \emph{decomposing surface} in a sutured manifold $(Y,\gamma)$ is a
  compact, oriented surface with boundary $(S,\bdy S)\subset (Y,\bdy
  Y)$ so that for every component $\lambda$ of $\bdy S\cap \gamma$,
  either:
  \begin{enumerate}
  \item $\lambda$ is a properly embedded, non-separating arc in $\gamma$, or
  \item $\lambda$ is a circle which is essential in the component of
    $\gamma$ containing $\lambda$.
  \end{enumerate}
  We also require that in each torus component $T$ of $\gamma$, the
  orientations of all circles in $S\cap T$ agree, and in each annular
  component $A$ of $\gamma$, the orientation of all circles in $S\cap
  A$ agree with the orientation of the core of $A$.
  
  Given a sutured manifold $(Y,\gamma)$ and a decomposing surface $S$
  we can form a new sutured manifold $(Y',\gamma')$ as
  follows. Topologically, $Y'=Y\setminus\nbd(S)$. Let
  $S_+,S_-\subset \bdy Y'$ denote the positive and negative pushoffs
  of $S$, respectively. Then $R_+' = (R_+\cap \bdy Y')\cup S'_+$
  (minus a neighborhood of its boundary), $R_-'=(R_-\cap \bdy Y')\cup
  S'_-$ (minus a neighborhood of its boundary), and $\gamma'$ is the
  rest of $\bdy Y'$ (cf.\ Exercise~\ref{exercise:gamma-prime}).  We
  call this operation \emph{sutured manifold decomposition} and write
  $(Y,\gamma)\stackrel{S}{\rightsquigarrow}(Y',\gamma')$.
\end{definition}

\begin{figure}
  \centering
  \includegraphics{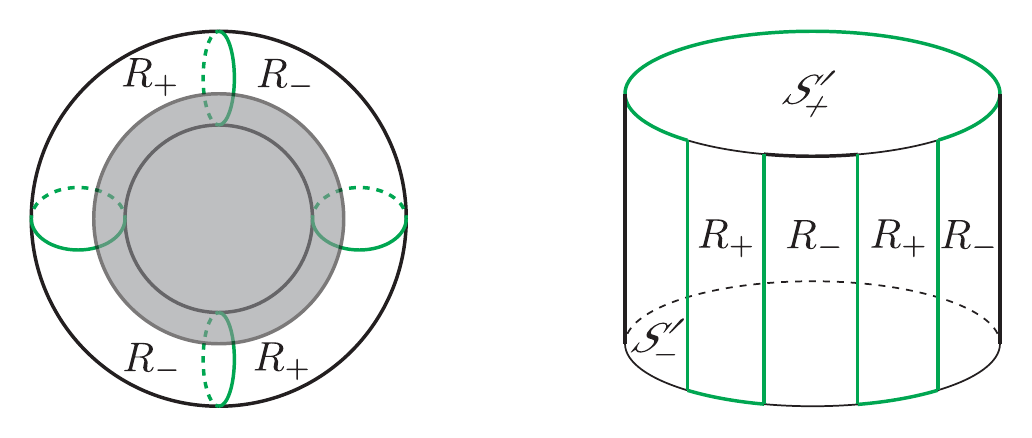}
  \caption{\textbf{A sutured manifold decomposition.} Left: the
    complement of the unknot, with four meridional sutures, together
    with a (gray) decomposing disk. Only the cores of the annular sutures are
    drawn, as green circles. Right: the result of performing a surface
    decomposition to this sutured manifold.}
  \label{fig:sutured-decomp}
\end{figure}

\begin{example}
  If $K$ is a knot in $S^3$, say, $Y=S^3\setminus\nbd(K)$, and
  $\gamma$ consists of $2n$ meridional sutures as in
  Example~\ref{eg:sutured-knot-complement} then any Seifert surface
  for $K$ is a decomposing surface for $(Y,\gamma)$.

  If $K$ is a fibered knot and $F$ is a Seifert surface for $K$ which
  is a fiber of the fibration then the result of doing a surface
  decomposition to the exterior $(Y,\gamma)$ of $K$ is a product
  sutured manifold.  The case that $K$ is the unknot is illustrated in
  Figure~\ref{fig:sutured-decomp}.
\end{example}

It is maybe better to think of the inverse operation to surface
decomposition; perhaps I will say that $(Y,\Gamma)$ is obtained from
$(Y',\Gamma')$ by a \emph{suture-compatible gluing} if $(Y',\Gamma')$
is obtained from $(Y,\Gamma)$ by a surface decomposition. (This is not
a standard term.) Unlike surface decomposition, suture-compatible
gluing is not a well-defined operation: it depends on both a choice of
subsurface $S'\subset \bdy Y'$ and a choice of homeomorphism
$S'_+\cong S'_-$. I think this is why it is not talked about, but for
the behavior of sutured Floer homology gluing seems more natural than
decomposing (especially in view of bordered Floer theory).

\begin{definition}
  We call a decomposing surface $S$ in a balanced sutured manifold
  $(Y,\gamma)$ \emph{balanced-admissible} if $S$ has no closed
  components and for every component $R$ of $R_+$ and $R_-$, the set
  of closed components of $S\cap R$
  is a union of parallel curves (where each of these curves has
  orientation induced by the boundary of $S$), and if these curves are
  null-homotopic then they are oriented as the boundary of their
  interiors.
  (This is not a standard term.)
\end{definition}

\begin{lemma}\label{lem:balanced-to-balanced}
  If $(Y,\Gamma)\stackrel{S}{\rightsquigarrow}(Y',\Gamma')$,
  $(Y,\Gamma)$ is balanced, and $S$ is balanced-admissible then
  $(Y',\Gamma')$ is balanced.
\end{lemma}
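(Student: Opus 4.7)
The plan is to verify in turn the four defining conditions of \emph{balanced} for $(Y',\Gamma')$: (i)~$T(\gamma')=\emptyset$, (ii)~$R_\pm'$ have no closed components, (iii)~$Y'$ has no closed components, and (iv)~$\chi(R_+')=\chi(R_-')$ on each component of $Y'$.

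First I would dispose of (i)--(iii) by direct inspection of the decomposition. Since $T(\gamma)=\emptyset$ and $S$ has no closed components (balanced-admissibility), every component of $\gamma'$ is either a remnant of the old annular $\gamma$ after cutting along arcs or essential circles of $\bdy S\cap\gamma$, or a new rectangular/annular strip introduced alongside an arc or circle of $\bdy S\cap R_\pm$; none of these is closed, giving (i). For (iii), $\bdy Y\neq\emptyset$ and $\bdy S\neq\emptyset$, so every component of $Y\setminus\nbd(S)$ inherits either a piece of the original $\bdy Y$ or one of the pushoffs $S_\pm$, hence has non-empty boundary. For (ii), a closed component of $R_+'$ would have to be assembled from pieces of $R_+$ and $S_+$ (each with non-empty boundary in every component) with all boundary cancelled along $\bdy S\cap R_+$; the parallel-and-coherently-oriented condition on circles of $\bdy S\cap R_\pm$ in the definition of balanced-admissibility rules out such a pathological closing-up, and similarly for $R_-'$.

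The heart of the argument is (iv), and it rests on the Euler characteristic identity
\[
\chi(R_\pm') \;=\; \chi(R_\pm) + \chi(S).
\]
To see this for $R_+'$, let $a_+$ be the number of arc-components of $\bdy S\cap R_+$ and note that the slitted surface $R_+\cap\bdy Y'$ satisfies $\chi(R_+\cap\bdy Y')=\chi(R_+)+a_+$, since cutting along an arc raises $\chi$ by $1$ while cutting along a circle leaves $\chi$ unchanged. Inclusion--exclusion with the gluing region $\bdy S\cap R_+$, whose Euler characteristic is also $a_+$, gives
\[
\chi(R_+') \;=\; \bigl(\chi(R_+)+a_+\bigr) + \chi(S) - a_+ \;=\; \chi(R_+) + \chi(S),
\]
and the ``minus a neighborhood of its boundary'' trimming is a deformation retract and so leaves $\chi$ unchanged. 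The symmetric calculation handles $R_-'$, and then globally
\[
\chi(R_+') - \chi(R_-') \;=\; \chi(R_+) - \chi(R_-) \;=\; 0
\]
by the balanced hypothesis on $(Y,\Gamma)$.

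The hard part will be upgrading this global identity to one valid on each component of $Y'$ individually, since one component of $Y$ can split into several components of $Y'$, and the pushoffs $S_+$ and $S_-$ may land in different components (whenever a sub-piece of $S$ separates a component of $Y$). To handle this I would partition the arcs and circles of $\bdy S$, together with the components of $S$ itself, according to which component of $Y'$ they border, and check the Euler characteristic count on each piece. The essential input from balanced-admissibility is that each circle of $\bdy S$ alternates between $R_+$- and $R_-$-arcs as it passes through annular sutures --- because an arc of $\bdy S\cap\gamma$ is required to be non-separating in its annular component, hence runs from $\bdy R_+$ to $\bdy R_-$ --- so the $a_+$ and $a_-$ contributions are matched along each circle of $\bdy S$, while the parallel-circles condition on $\bdy S\cap R_\pm$ pairs up the closed-circle contributions; tracking these matched contributions through the formula above on each component of $Y'$ then yields the desired componentwise equality $\chi(R_+'|_{Y''})=\chi(R_-'|_{Y''})$.
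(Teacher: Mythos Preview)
The paper does not actually prove this lemma: it is deferred to the exercises (Exercise~\ref{ex:bal-to-bal}), so there is no argument in the text to compare yours against. Your overall strategy---verify the four balanced conditions directly---is the natural one, and your global Euler-characteristic computation
\[
\chi(R_\pm') = \chi(R_\pm) + \chi(S)
\]
via cutting-and-regluing is correct and cleanly done.

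Where the proposal is thin is in the two places you yourself flag. For condition~(ii), the sentence ``the parallel-and-coherently-oriented condition \dots\ rules out such a pathological closing-up'' is an assertion, not an argument; you should exhibit concretely how a violation of balanced-admissibility (e.g.\ an annular $S$ with two null-homotopic boundary circles in $R_+$ oriented incompatibly) produces a closed torus component of $R_+'$, and then explain why the hypothesis prevents exactly this. For componentwise~(iv), your observation that arcs of $\partial S\cap\gamma$ run from $R_+$ to $R_-$ gives $a_+=a_-$ \emph{along each boundary circle of $S$}, but this is still a statement attached to $S$, not to a component $Y''$ of $Y'$. When a component $S_0$ of $S$ separates, $S_{0,+}$ and $S_{0,-}$ land in different components of $Y'$, and the pieces of $R_\pm$ adjacent to them are different pieces of the cut-up $R_\pm$; you need to show that the contribution $\chi(S_0)$ to $R_+'$ on one side is balanced by an equal deficit in the $R_+$-versus-$R_-$ pieces on that side. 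One route is to argue directly with the closed surface $\partial Y''$ and the cobounding relationship between $\partial S_0$ and the pieces of $R_\pm$ it cuts off, tracking orientations carefully; the parallel-circles condition enters when boundary circles of $S$ lie entirely in $R_+$ or $R_-$. As written, your last paragraph names the right ingredients but does not assemble them into a proof.
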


The proof is Exercise~\ref{ex:bal-to-bal}.

A particularly simple kind of sutured decomposition is the following:
\begin{definition}
  A \emph{product disk} in $(Y,\Gamma)$ is a decomposing surface $S$
  for $(Y,\Gamma)$ so that $S\cong \bD^2$ and $S\cap \Gamma$ consists
  of two points.  A \emph{product decomposition} is a sutured
  decomposition $(Y,\Gamma)\stackrel{S}{\rightsquigarrow}(Y',\Gamma')$
  where $S$ is a product disk.
\end{definition}

\begin{lemma}\label{lem:glue-foliation}
  Suppose that $(Y,\Gamma)\stackrel{S}{\rightsquigarrow}(Y',\Gamma')$,
  where $\bdy S$ is disjoint from any toroidal sutures of $Y$. Let
  $\Foli'$ be a foliation on $(Y',\Gamma')$ compatible with $\Gamma'$. Then there is an induced
  foliation $\Foli$ on $(Y,\Gamma)$ compatible with $\Gamma$ with the property that $S$ is a
  leaf of $\Foli$.
\end{lemma}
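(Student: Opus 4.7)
The plan is to recover $(Y,\Gamma)$ from $(Y',\Gamma')$ by reversing the surface decomposition, and then show that $\Foli'$ descends to a foliation on $Y$ with $S$ as a leaf. Concretely, $Y$ is obtained from $Y'$ by gluing $S_+$ to $S_-$ along the canonical homeomorphism $S_+\cong S\cong S_-$, and under this identification the image of $S_\pm$ is the desired surface $S\subset Y$.

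Since $\Foli'$ is compatible with $\Gamma'$, the surfaces $R_+'$ and $R_-'$ are unions of leaves of $\Foli'$, with the orientations of those leaves matching the orientations of $R_\pm'$. By the construction of $(Y',\Gamma')$, the pushoffs $S_+\subset R_+'$ and $S_-\subset R_-'$ are each (away from a neighborhood of their boundary) contained in these ``horizontal'' parts of $\bdy Y'$, and with the sutured-decomposition orientation conventions both $S_+$ and $S_-$ inherit the orientation of $S$ itself. Gluing $S_+$ to $S_-$ therefore identifies a leaf of $\Foli'$ on one side with a leaf on the other side in an orientation-consistent way; across the identification the two half-neighborhoods of $S$ in $Y$ assemble into a foliated collar of $S$ with $S$ itself as the central leaf. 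Away from $S$, the foliation $\Foli$ is just $\Foli'$, so all other local product charts are inherited directly.

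To verify that $\Foli$ is compatible with $\Gamma$, I would check the two defining conditions. Transversality to $\gamma$: the annular sutures $\gamma\subset\bdy Y$ are contained in the part of $\bdy Y'$ disjoint from $S_\pm$, and on that part $\gamma$ coincides with a subset of $\gamma'$, on which $\Foli'$ is transverse by hypothesis. That $R_\pm$ are unions of leaves: once $S_\pm$ is excised from $R_\pm'$ and the result is re-embedded in $\bdy Y$, it becomes exactly $R_\pm\cap\bdy Y$, which is a union of leaves since $R_\pm'$ was.

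The main obstacle is the local structure of $\Foli$ along $\bdy S$. Here the sutured decomposition created \emph{new} annular pieces of $\gamma'$ sitting in neighborhoods of $\bdy S$: where $\bdy S$ ran as an arc through $\gamma$ or as a curve in $R_\pm$, the cut introduces a strip or annulus of $\gamma'$ connecting $S_+$ to $S_-$ along which $\Foli'$ is transverse. The hypothesis that $\bdy S$ is disjoint from any toroidal sutures is what ensures these new pieces of $\gamma'$ are annuli rather than more complicated surfaces, so that the transverse foliation on each such annulus is standard and matches up when we glue $S_+$ to $S_-$. Checking that the product chart of $\Foli'$ near a point of $\bdy S$ on one side glues to the product chart on the other side to yield a genuine foliation chart of $\Foli$ in $Y$ (with $S$ locally appearing as a single leaf meeting $\gamma$ transversely) is the one calculation that requires care; everywhere else the foliation structure is inherited tautologically from $\Foli'$.
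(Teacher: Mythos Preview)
Your approach is the intended one. The paper does not actually give a proof of this lemma: immediately after the statement it remarks that this is the easy case of \cite[Theorem~5.1]{Gabai83:foliations} and relegates the argument to Exercise~\ref{ex:glue-fol}. So there is no in-paper proof to compare against beyond the hint that gluing $S_+$ to $S_-$ and carrying $\Foli'$ across is the straightforward half of Gabai's construction---which is exactly what you do.

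Two small comments on the write-up. First, your claim that the annular part of $\gamma$ ``coincides with a subset of $\gamma'$'' is only literally true away from $\bdy S$; where an arc or essential circle of $\bdy S$ runs through an annular suture, that suture is genuinely cut and re-assembled. You effectively handle this in your final paragraph, so this is only an internal inconsistency rather than a gap. Second, your explanation of the role of the toroidal hypothesis is not quite right: components of $\gamma'$ are always annuli or tori by definition, so nothing ``more complicated'' can appear. The actual issue when a circle $c$ of $\bdy S$ lies on a toroidal suture $T$ is that $T$ becomes an annulus in $\gamma'$ whose two boundary circles must be re-identified to recover $T$, and there is no \emph{a priori} reason the transverse one-dimensional foliation $\Foli'\cap\gamma'$ on that annulus matches up under that re-identification. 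Arranging this (while preserving tautness) is precisely the content of the harder case in Gabai's theorem.
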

In other words, suture-compatible gluing takes foliations to
foliations with $S$ as a leaf.  This is the easy case in the proof
of~\cite[Theorem 5.1]{Gabai83:foliations}; the proof is
Exercise~\ref{ex:glue-fol}. The harder case, when $\bdy S$ intersects
some toroidal sutures, takes up most of the proof.

\begin{theorem}\cite[Theorems 4.2 and 5.1]{Gabai83:foliations}
  \label{thm:hierarchy}
  Let $(Y,\gamma)$ be a taut sutured manifold. Then there is a
  sequence of surface decompositions
  \[
  (Y,\gamma)=(Y_1,\gamma_1)\stackrel{S_1}{\rightsquigarrow}
  (Y_2,\gamma_2)
  \stackrel{S_2}{\rightsquigarrow}\cdots
  \stackrel{S_{n-1}}{\rightsquigarrow} (Y_n,\gamma_n)
  \]
  so that $(Y_n,\gamma_n)$ is a product sutured manifold, and so that
  moreover there is an induced taut foliation on $(Y,\gamma)$.
\end{theorem}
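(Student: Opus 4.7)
The plan is to prove the two halves of the statement in sequence: first construct the hierarchy by induction on a suitable complexity, then propagate a taut foliation backwards from the product end.

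For the hierarchy, I would define a complexity $c(Y,\gamma)$ on taut sutured manifolds---for instance a lexicographic pair combining $x(R_+)+x(R_-)$ with $-\chi(R_+)-\chi(R_-)$ and the number of sutures---chosen so that $c$ is minimal exactly on disjoint unions of product sutured manifolds. The induction step claims that if $(Y,\gamma)$ is taut but not a product, then there is a decomposing surface $S$ such that $(Y,\gamma)\stackrel{S}{\rightsquigarrow}(Y',\gamma')$ is taut with $c(Y',\gamma')<c(Y,\gamma)$. To produce $S$, pick a class in $H_2(Y,\bdy Y)$ whose nontriviality witnesses the failure to be a product, represent it by an embedded Thurston-norm-minimizing surface $F$, and then isotope $\bdy F$ into ``well-groomed'' position relative to $\gamma$: arcs of $\bdy F\cap A(\gamma)$ are nonseparating, circles of $\bdy F\cap A(\gamma)$ are essential and coherent with the core, and $\bdy F\cap T(\gamma)$ consists of parallel coherently oriented curves, as required by Definition~\ref{def:decompose}. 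This grooming step---achieving such standard position without sacrificing norm-minimality---is the technical heart of Gabai's argument and is where I expect the main obstacle to lie.

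Once $S$ is in hand, tautness of $(Y',\gamma')$ follows from standard sutured-manifold arguments: irreducibility passes to $Y'$ because any essential sphere in $Y'$ could be capped along parallel copies of $S$ to produce one in $Y$, and norm-minimality of the new $R'_\pm$ follows from a cut-and-paste comparison with any hypothetical complexity-reducing representative, which would combine with $S$ to contradict norm-minimality of either $R_\pm$ or $S$ itself. Strict decrease of $c$ is forced by the grooming setup, and when $c$ is minimal every surface decomposition available is a product disk decomposition, forcing a disjoint union of product sutured manifolds. Finiteness of the sequence is automatic because $c$ takes values in a well-ordered set.

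For the induced taut foliation, I would proceed by downward induction starting from $(Y_n,\gamma_n)$, which carries the obvious horizontal foliation by $\{t\}\times R$ with transversal $[0,1]\times\{\pt\}$, visibly taut and compatible with $\gamma_n$. At each step, Lemma~\ref{lem:glue-foliation} supplies a compatible foliation on $(Y_i,\gamma_i)$ with $S_i$ as a leaf whenever $\bdy S_i$ is disjoint from $T(\gamma_i)$, and tautness is preserved by concatenating the old transversal with short arcs crossing $S_i$. The case where $\bdy S_i$ meets toroidal sutures is the harder half of \cite[Theorem 5.1]{Gabai83:foliations} and uses a local spinning construction near each torus component to spread $\bdy S_i$ into a foliation of a neighborhood; I would treat this as a black box and focus the exposition on the arc-and-circle case covered by Lemma~\ref{lem:glue-foliation}.
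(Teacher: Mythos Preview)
Your two-stage outline---build the hierarchy by induction, then propagate the foliation backward via Lemma~\ref{lem:glue-foliation} (black-boxing the toroidal case)---matches exactly the structure of the paper's treatment, which is itself only a ``Comments on Proof'' paragraph rather than a proof. In particular, your handling of the foliation half is precisely what the paper says: start from the horizontal foliation on the product piece and glue using Lemma~\ref{lem:glue-foliation} and its harder cousin from \cite[Theorem 5.1]{Gabai83:foliations}.

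The one place where you go beyond the paper and where there is a genuine gap is the complexity function. The paper explicitly warns that Gabai's induction is ``intricate'' and that ``even saying what it is an induction on is not easy.'' Your proposed example---a lexicographic combination of $x(R_+)+x(R_-)$, $-\chi(R_\pm)$, and the suture count---is almost certainly too naive to carry the argument. Decomposing along a norm-minimizing surface need not decrease $x(R_+)+x(R_-)$ (the new $R'_\pm$ incorporate copies of $S$, which can raise complexity), and the suture count can also go up. Your assertion that ``strict decrease of $c$ is forced by the grooming setup'' is therefore unsupported for this $c$. Gabai's actual complexity is more elaborate, involving auxiliary data beyond the sutured manifold itself, and establishing its decrease is a substantial part of \cite[Theorem 4.2]{Gabai83:foliations}. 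You correctly flag grooming as a technical obstacle, but the choice of complexity is an equally serious one, and your sketch underestimates it.
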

\begin{proof}[Comments on Proof]
  Gabai's proof of existence of the sequence of decompositions
  (\emph{sutured hierarchy}), Theorem 4.2 in his paper, is an
  intricate induction; even saying what it is an induction on is not
  easy.  Once one has the hierarchy, one uses
  Lemma~\ref{lem:glue-foliation} and its harder cousin for decomposing
  surfaces intersecting $T(\gamma)$ to reassemble the obvious
  foliation of the product sutured manifold $(Y_n,\gamma_n)$ to a
  foliation for $(Y,\gamma)$;  this part is Theorem 5.1 in his paper.
\end{proof}

In fact, Theorem~\ref{thm:hierarchy} has two modest refinements:
\begin{proposition}\label{prop:balanced-hierarchy}(\cite[Theorem 4.19]{Scharlemann89:sutured}, see
  also~\cite[Theorem 8.2]{Juhasz08:SuturedDecomp}) With notation as in
  Theorem~\ref{thm:hierarchy}, if $(Y,\gamma)$ is balanced then we can
  assume the surfaces $S_i$ are all balanced-admissible.
\end{proposition}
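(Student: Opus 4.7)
The plan is to start from Gabai's hierarchy (Theorem~\ref{thm:hierarchy}) and modify each decomposing surface $S_i$ to be balanced-admissible, while preserving the property that the hierarchy terminates in a product sutured manifold. Lemma~\ref{lem:balanced-to-balanced} guarantees that balanced-admissibility at each step keeps the intermediate sutured manifolds $(Y_i,\gamma_i)$ balanced, so the inductive setup is self-consistent and we only need to worry about modifying one $S_i$ at a time.

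First I would eliminate closed components of each $S_i$. Any sphere component bounds a $3$-ball by irreducibility (which is part of tautness) and may simply be discarded. A higher-genus closed component either is null-homologous in $Y_i$ and can be discarded without changing $[S_i] \in H_2(Y_i,\gamma_i)$, or else it may be tubed into another component of $S_i$ along a small arc; tautness and norm minimality of $R_\pm$ prevent the complexity from worsening.

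Second, I would correct the pattern of closed intersection curves of $S_i$ with a component $R$ of $R_\pm(\gamma_i)$. The standard tool is \emph{double-curve sum}: given two closed curves $C_1, C_2$ of $S_i \cap R$ that are parallel in $R$ but with incompatible orientations, the annular region between them in $R$ can be used to cut-and-reglue $S_i$ so that the offending pair is eliminated, reducing the number of non-coherent circles. Null-homotopic circles bound disks in $R$, and one can similarly surger $S_i$ along such a disk so that the resulting curve is oriented as the boundary of its interior disk. None of these operations changes $[S_i]\in H_2(Y_i,\gamma_i)$, so the modified surface remains a decomposing surface with the same homological effect.

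The hard part will be controlling the \emph{global} consequences of these local modifications: after replacing $S_i$ by a balanced-admissible $\widetilde{S}_i$, one must verify that the rest of the hierarchy can still be carried out so as to terminate in a product sutured manifold, rather than getting stuck. Rather than rederive Gabai's induction from scratch, I would invoke Scharlemann's refinement \cite[Theorem 4.19]{Scharlemann89:sutured}, which produces hierarchies all of whose decomposing surfaces are \emph{well-groomed} — the condition that $S\cap R_\pm$ consists of non-separating arcs and coherently oriented parallel curves. Well-groomedness is essentially balanced-admissibility once closed components are removed; Juh\'asz's \cite[Theorem 8.2]{Juhasz08:SuturedDecomp} records exactly this translation in the balanced setting and gives the statement as claimed.
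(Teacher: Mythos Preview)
The paper does not actually prove this proposition: it is stated with citations to \cite[Theorem 4.19]{Scharlemann89:sutured} and \cite[Theorem 8.2]{Juhasz08:SuturedDecomp} and no argument is given. Your proposal, after sketching some local modifications, ultimately defers to exactly the same references, so in that sense you end up in the same place as the paper.

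That said, the first two-thirds of your proposal attempt more than the paper does, and it is worth being honest about what that sketch accomplishes. The local moves you describe (discarding sphere components, tubing or discarding closed higher-genus components, double-curve summing along annuli in $R_\pm$ to make parallel circles coherently oriented, surgering along disks bounded by null-homotopic circles) are the standard toolkit and are individually fine. But you yourself identify the real issue: after replacing $S_i$ by a modified $\widetilde S_i$, one must know that the \emph{rest} of the hierarchy can still be run to a product. Your local modifications do not obviously preserve the inductive quantity (Gabai's complexity) that makes the hierarchy terminate, and the subsequent $S_j$ were chosen relative to the original $(Y_j,\gamma_j)$, not the modified ones. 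So the sketch is not a proof on its own --- which you acknowledge by invoking Scharlemann's well-groomed hierarchy theorem at the end. That invocation is exactly what the paper does, and it is the correct move: the content really does live in \cite{Scharlemann89:sutured} and the translation to the balanced setting in \cite{Juhasz08:SuturedDecomp}.
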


\begin{definition}\label{def:good}
  A balanced-admissible decomposing surface $S$ is called \emph{good} if every component of $\bdy S$ intersects both $R_+$ and $R_-$.
  (This is Juh\'asz's term~\cite[Definition 4.6]{Juhasz08:SuturedDecomp}.)
\end{definition}

\begin{proposition}\cite[Lemma 4.5]{Juhasz08:SuturedDecomp}\label{prop:good-hierarchy}
  Any balanced-admissible decomposing surface $S$ is isotopic to a
  good decomposing surface $S'$ so that decomposing along $S$ and
  decomposing along $S'$ give the same result. In particular, in
  Proposition~\ref{prop:balanced-hierarchy}, we can assume the
  decomposing surfaces are all good.
\end{proposition}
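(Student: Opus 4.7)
The plan is to identify the boundary components of $S$ that fail goodness and modify each by a small, local isotopy pushing a short arc across a suture. A component $c$ of $\bdy S$ is \emph{bad} precisely when it lies entirely in $R_+$, entirely in $R_-$, or in a single connected component of $\gamma$. Since $(Y,\Gamma)$ is balanced we have $T(\gamma)=\emptyset$, so in the last case that component is an annulus $A$ and $c$ is an essential circle parallel to the core of $A$.

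In the annular case, I would choose two disjoint short subarcs $p,q\subset c$ and simultaneously isotope $p$ slightly across one boundary circle of $A$ into $R_+$ and $q$ across the other into $R_-$, extending the boundary isotopy to an isotopy of $S$ through a collar of $\bdy Y$. After this move, $c$ crosses sutures transversely at four points and meets both $R_+$ and $R_-$.

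For a bad component $c\subset R_+$ (the case $c\subset R_-$ is symmetric), balancedness guarantees that $R_+$ has no closed components, so the component of $R_+$ containing $c$ meets $\Gamma$. I would pick an embedded arc $\alpha\subset R_+$ from a point of $c$ to a point of $\Gamma$, disjoint from the rest of $\bdy S$. Using $\alpha$ as a guide rail, I would isotope a small subarc of $c$ along $\alpha$, over the suture, through the annular neighborhood of $\Gamma$ in $\bdy Y$, and briefly into $R_-$, returning symmetrically on the other side. The resulting curve keeps most of its length in $R_+$ but now also meets $\gamma$ and $R_-$, making it good.

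The main obstacle will be handling the parallel families of closed boundary components permitted by balanced-admissibility: all parallel copies in a single component of $R_\pm$ must be treated together without producing new intersections. I expect this is doable by running the arcs $\alpha_i$ inside a tubular neighborhood of a single arc from the innermost curve of the family out to $\Gamma$ and spacing the modifications apart in that tube. The remaining verifications are routine: the short detours meet $\gamma$ in non-separating arcs (each crosses its annular suture component once), the orientations induced on $\bdy S'\cap\gamma$ remain consistent so that $S'$ is still balanced-admissible, and the whole process is an isotopy through properly embedded decomposing surfaces, whence $(Y,\Gamma)$ decomposed along $S$ and along $S'$ yield homeomorphic sutured manifolds.
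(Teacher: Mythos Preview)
The paper does not prove this proposition; it cites Juh\'asz and moves on. So there is no in-paper proof to compare against, and I will evaluate your argument on its own.

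Your geometric idea---pushing fingers of bad boundary components across sutures---is the right one and is essentially what Juh\'asz does. But the final justification has a gap. You assert that ``the whole process is an isotopy through properly embedded decomposing surfaces,'' and deduce from this that the decompositions agree. The deduction would be fine, but the premise is false. As a finger of $\partial S_t$ first enters an annular component $A$ of $\gamma$, the new component of $\partial S_t\cap A$ is a short arc with both endpoints on the \emph{same} boundary circle of $A$, i.e., a separating arc in $A$. This violates the definition of decomposing surface (Definition~\ref{def:decompose}), so the isotopy you describe leaves the class of decomposing surfaces, and your implication does not apply. (Said differently: an isotopy through genuine decomposing surfaces cannot change the number of intersection points of $\partial S$ with $\Gamma$, so your finger move is never such an isotopy.)

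What is actually required---and what Juh\'asz does---is a direct local check that the finger move leaves the homeomorphism type of the decomposed manifold $(Y',\Gamma')$ unchanged. One compares the sutured structure on $\partial Y'$ near the bigon between the old and new positions of the boundary arc and exhibits an explicit homeomorphism of sutured manifolds. This is not difficult, but it is the actual content of the lemma; calling it ``routine'' and invoking an isotopy principle that does not apply skips the point that needs to be verified.
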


\subsection{Suggested exercises}
\begin{enumerate}
\item \label{exercise:S1-times-Sigma} Show, using algebraic topology, that in $S^1\times \Sigma_g$,
  the fiber $\Sigma_g$ is a minimal genus representative of its
  homology class.
\item\label{exercise:gamma-prime} Give an explicit description of $\gamma'$ from Definition~\ref{def:decompose}.
\item Prove that if $(Y,\gamma)\stackrel{S}{\rightsquigarrow}
  (Y',\gamma')$ and $(Y',\gamma')$ is taut then either $Y$ is taut or
  $Y=\bD^2\times S^1$ and $S$ is a disk. (This is~\cite[Lemma
  3.5]{Gabai83:foliations}.)
\item\label{ex:bal-to-bal} Prove Lemma~\ref{lem:balanced-to-balanced}.
\item\label{ex:glue-fol} Prove Lemma~\ref{lem:glue-foliation}.
\end{enumerate}

\section{Heegaard diagrams and holomorphic disks}\label{sec:HF}
\renewcommand{\thesec}{Lecture 2} 
\renewcommand{\sectitle}{Heegaard diagrams and holomorphic disks}

The goal of this lecture is to define sutured Floer homology, compute
some examples of it, and discuss its basic properties. For simplicity,
we will generally work with RHT sutured manifolds, and will always
take coefficients in $\Field=\ZZ/2\ZZ$; neither of these restrictions
present in~\cite{Juhasz06:Sutured}.

Except as noted, the definitions and theorems in this lecture are all
due to Juh\'asz~\cite{Juhasz06:Sutured} (building on earlier work of Ozsv\'ath-Szab\'o,
Rasmussen, and others). Many of the examples predate his work, but I
will state them in his language.

Throughout this lecture, sutured manifold will mean \emph{balanced}
sutured manifold.
\subsection{Heegaard diagrams for sutured manifolds}
\begin{definition}
  A \emph{sutured Heegaard diagram} is a surface $\Sigma$ with
  boundary and tuples $\alphas=\{\alpha_1,\dots,\alpha_n\}$ and
  $\betas=\{\beta_1,\dots,\beta_n\}$ of pairwise disjoint circles in $\Sigma$ so
  that the result $F_-$ (respectively $F_+$) of performing surgery on
  the $\alpha$-circles (respectively $\beta$-circles) has no closed
  components.

  A sutured Heegaard diagram $\HD=(\Sigma,\alphas,\betas)$ specifies a
  sutured $3$-manifold $Y(\HD)$ as follows:
  \begin{itemize}
  \item As a topological space, $Y(\HD)$ is obtained from a thickened
    copy $\Sigma\times[0,1]$ of $\Sigma$ by attaching $3$-dimensional
    $2$-handles along the $\alpha_i\times \{0\}$ and the
    $\beta_i\times\{1\}$.
  \item The boundary of $Y(\HD)$ is $F_-\cup
    \bigl((\bdy\Sigma)\times[0,1]\bigr)\cup F_+$ where $F_-$ and $F_+$ are the parts of $\bdy Y(\HD)$ corresponding to $\Sigma\times\{0\}$ and $\Sigma\times\{1\}$, respectively. We let $R_-=F_-$,
    $R_+=F_+$ and $\Gamma=(\bdy\Sigma)\times \{1/2\}$.
  \end{itemize}
\end{definition}

\begin{figure}
  \centering
  \includegraphics{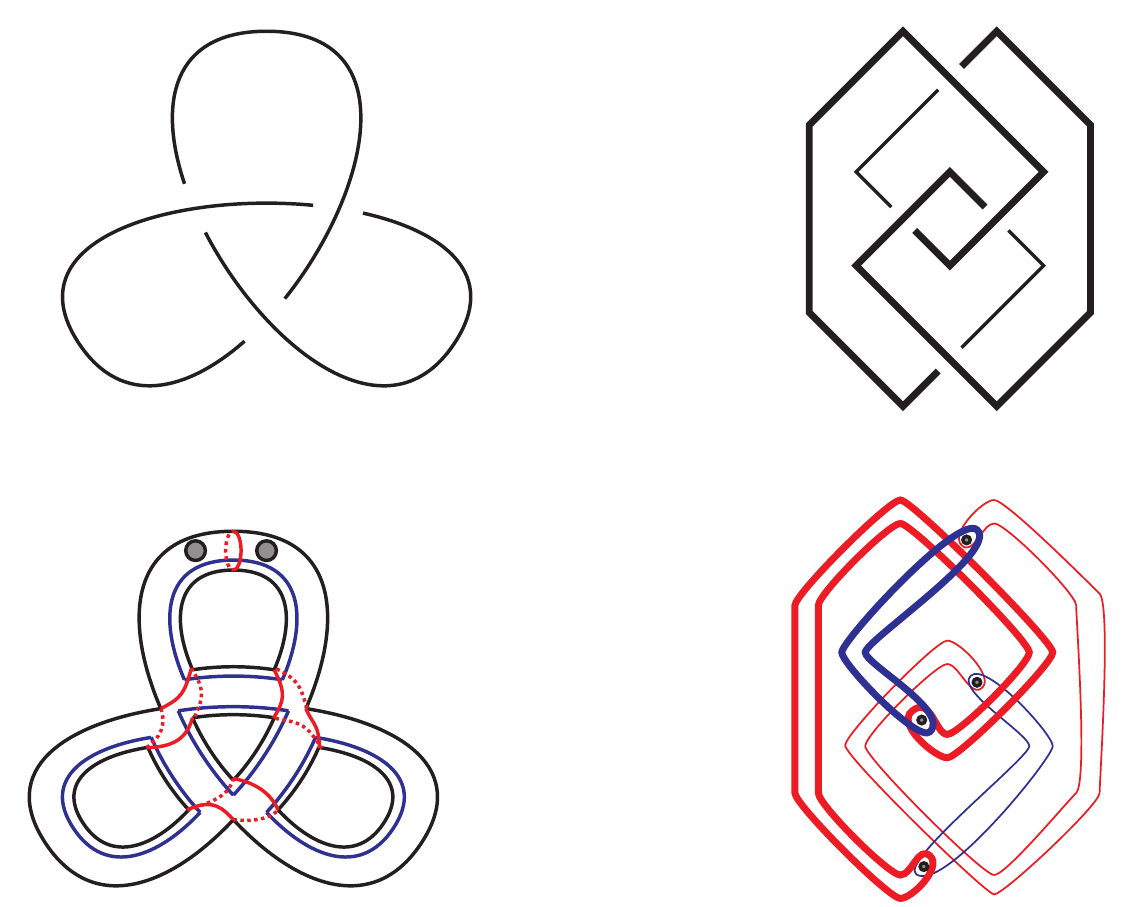}
  \caption{\textbf{Diagrams for knot complements from knot diagrams.}
    Left: the usual diagram for the trefoil and a corresponding
    sutured Heegaard diagram for its exterior. The gray dots are holes
    in the Heegaard surface. Right: a $2$-bridge presentation of the
    trefoil and a corresponding sutured Heegaard diagram. The surface
    $\Sigma$ is $S^2$ minus $4$ disks. In this
    Heegaard diagram, the thin red and blue circles are not part of
    the diagram.}
  \label{fig:knot-to-sutured}
\end{figure}

\begin{example}
  Fix a knot $K\subset S^3$ and a knot diagram $D$ for $K$ with $n$
  crossings. Consider the $3$-manifold $Y=S^3\setminus\nbd(K)$. We can
  find a Heegaard diagram for $Y$ with $2n$ meridional sutures as on
  the left of Figure~\ref{fig:knot-to-sutured}. Alternatively, given
  an $n$-bridge presentation of $K$, there is a corresponding sutured
  Heegaard diagram for $K$ with $2n$ sutures; see the right of
  Figure~\ref{fig:knot-to-sutured}.

  These kinds of Heegaard diagram are exploited
  in~\cite{OSS09:singular,OS09:cube} to give a cube of resolutions
  description of knot Floer homology.
\end{example}

\begin{figure}
  \centering
  \includegraphics{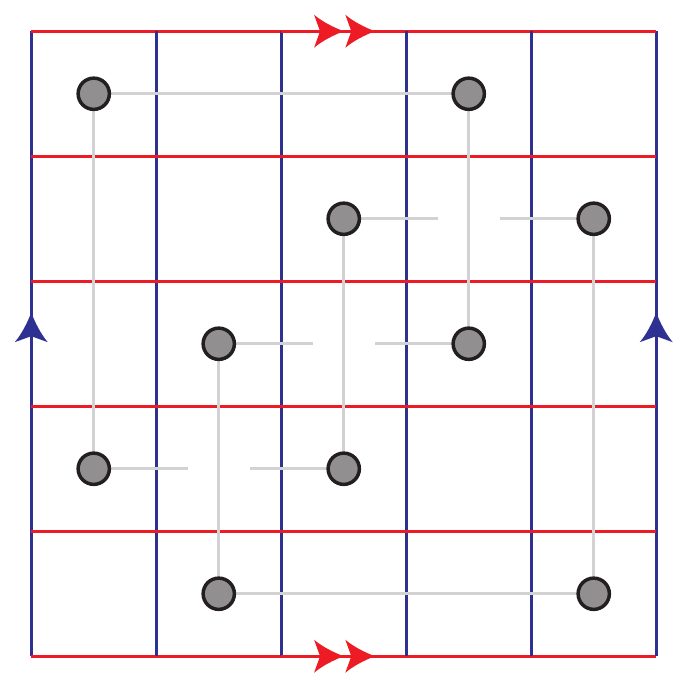}
  \caption{\textbf{A toroidal grid diagram for the trefoil.} The left
    and right edges of the diagram are identified. The knot itself is
    shown in light gray.}
  \label{fig:toroidal-trefoil}
\end{figure}

\begin{example}\label{eg:grid-diags}
  An \emph{$n\times n$ toroidal grid diagram} is a special kind of
  sutured Heegaard diagram in which the $\alpha$-circles (respectively
  $\beta$-circles) are $n$ horizontal (respectively vertical) circles
  on a torus with $2n$ disks removed. Each horizontal (respectively
  vertical) annulus between two adjacent $\alpha$-circles
  (respectively $\beta$-circles) should have two punctures. A toroidal grid diagram
  represents the complement of a link in $S^3$, with meridional
  sutures on the link components. See
  Figure~\ref{fig:toroidal-trefoil}. Toroidal grid diagrams have received
  a lot of attention because, as we will discuss in Section~\ref{sec:grid-diags}, their Heegaard
  Floer invariants have nice combinatorial
  descriptions~\cite{MOS06:CombinatorialDescrip}.
\end{example}

\begin{figure}
  \centering
  \includegraphics{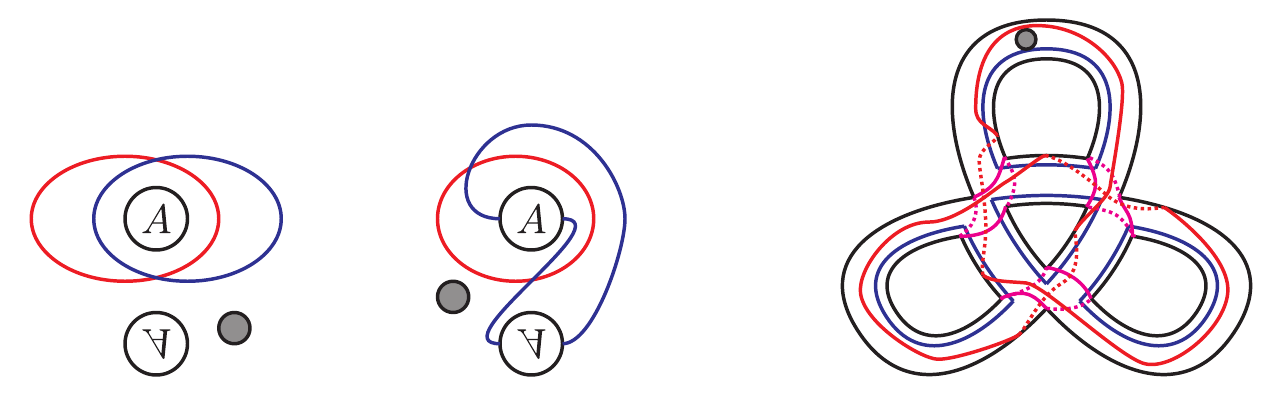}
  \caption{\textbf{Heegaard diagrams for 3-manifolds with $S^2$
      boundary.} Left: a Heegaard diagram for $S^2\times S^1$. Center:
    a Heegaard diagram for $\RR P^3$. Right: a Heegaard diagram for a
    surgery on the trefoil. The $\alpha$ circles are red, $\beta$
    circles are blue, and the labeled empty circles indicate
    handles. (So, the first two pictures lie on punctured tori, and
    the third on a punctured surface of genus $4$.)}
  \label{fig:Heegaard-diags}
\end{figure}

\begin{example}
  Suppose $Y$ is a closed manifold. Fix a \emph{Heegaard splitting}
  for $Y$, i.e., a decomposition $Y=\HB_1\cup_\Sigma \HB_2$, where the
  $\HB_i$ are handlebodies. We can obtain Heegaard diagrams for
  $Y\setminus \bD^3$ as follows. Suppose $\Sigma$ has genus $g$. Fix
  pairwise-disjoint circles $\alpha_1,\dots,\alpha_g\subset \Sigma$ so that:
  \begin{itemize}
  \item Each $\alpha_i$ bounds a disk in $\HB_1$ and
  \item The $\alpha_i$ are linearly independent in $H_1(\Sigma)$.
  \end{itemize}
  Fix circles $\beta_i$ with the same property, but with $\HB_2$ in
  place of $\HB_1$. Let $\Sigma'$ be the result of deleting a disk $D$
  from $\Sigma$ (chosen so that $D$ is disjoint from the $\alpha_i$
  and $\beta_i$). Then
  $(\Sigma',\alpha_1,\dots,\alpha_g,\beta_1,\dots,\beta_g)$ is a
  sutured Heegaard diagram for $Y\setminus\bD^3$ with a single suture
  on the $S^2$ boundary.  See Figure~\ref{fig:Heegaard-diags} for
  some examples.

  In the early days of the subject, these were the only kinds of
  diagrams considered in Heegaard Floer homology.
\end{example}

\begin{figure}
  \centering
  \includegraphics{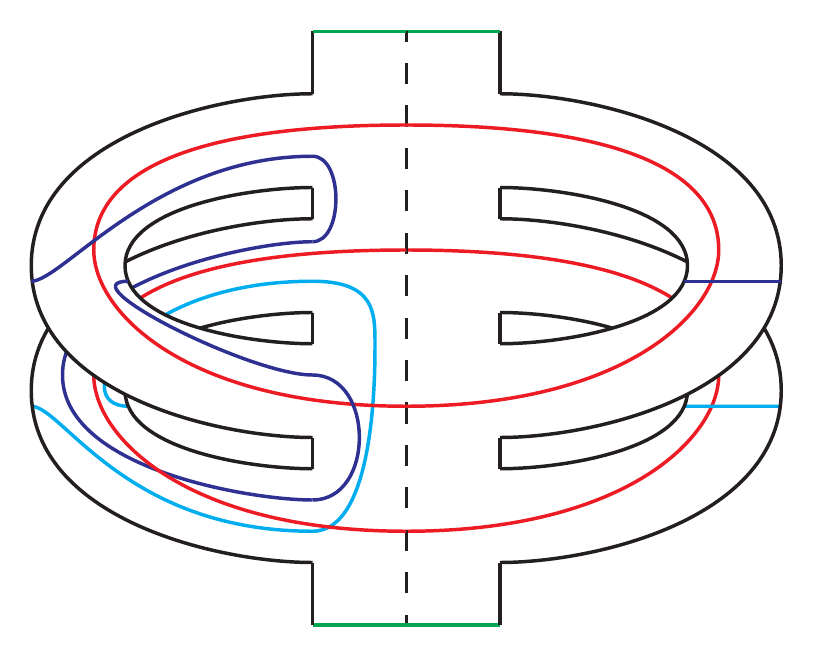}
  \caption{\textbf{Sutured Heegaard diagrams for fibered knot
      complements.} This is a Heegaard diagram for the genus $1$,
    fibered knot with monodromy $ab^{-1}$. The $\alpha$-circles are in
    red and the $\beta$-circles are in blue. The two black arcs in the
    boundary are meant to be glued together in the obvious way.}
  \label{fig:open-book}
\end{figure}

\begin{example}\label{eg:open-book}
  Suppose $K$ is a fibered knot in $Y$, with fiber surface $F$ and
  monodromy $\phi\co F\to F$. Divide $\bdy F$ into two sub-arcs, $A$
  and $B$, so that $\bdy F=A\cup B$ and $A\cap B=\bdy A=\bdy
  B$. Choose $\phi$ so that $\phi(A)=A$ and $\phi(B)=B$.

  Choose $2k$ disjoint, embedded arcs $a_1,\dots,a_{2k}$ in $F$ with
  boundary in $A$, giving a basis for $H_1(F,\bdy F)$. Let
  $b_1,\dots,b_{2k}$ be a set of dual arcs to $a_1,\dots,a_{2k}$, with
  boundary in $B$. (That is, $a_i$ and $b_i$ intersect transversely in
  a single point and $a_i\cap b_j=\emptyset$ if $i\neq j$.)

  Let $\Sigma=[F\cup (-F)]\setminus\nbd(A\cap B)$ be the result of
  gluing together two copies of $F$ and deleting a neighborhood of the
  endpoints of $A$. Let $\alpha_i=a_i\cup a_i$ and let $\beta_i=b_i\cup
  \phi(b_i)$. Then
  $(\Sigma,\alpha_1,\dots,\alpha_{2k},\beta_1,\dots,\beta_{2k})$ is a
  sutured Heegaard diagram for $Y\setminus\nbd(K)$, with two
  meridional sutures along $\bdy\nbd(K)$. See Figure~\ref{fig:open-book}.

  To see this, let $f\co (Y\setminus\nbd(K))\to S^1$ be the
  fibration. Write $S^1=[0,\pi]\cup_{\bdy}[\pi,2\pi]$. We can think of
  $\Sigma$ as 
  \[
  \bigl(f^{-1}(0)\bigr)\cup \bigl(f^{-1}(\pi)\bigr)\cup \bigl([0,\pi]\times A\bigr)\cup \bigl([\pi,2\pi]\times B\bigr).
  \]
  Use the monodromy along $[0,\pi]$ to identify $F=f^{-1}(0)$ and
  $-F=f^{-1}(\pi)$. Then each $\alpha_i$ bounds a disk in
  $f^{-1}([0,\pi])$, and each $\beta_i$ bounds a disk in
  $f^{-1}([\pi,2\pi])$.
\end{example}

Notice that the sutured manifolds specified by a Heegaard diagram are
balanced. (We could have specified unbalanced ones by allowing the
number of $\alpha$ and $\beta$ circles to be different and dropping
our restriction on closed components, but we will not be able to
define invariants of such unbalanced diagrams.
\begin{theorem}\label{thm:HD-exists}
  Any balanced sutured manifold $(Y,\Gamma)$ is represented by a
  sutured Heegaard diagram.
\end{theorem}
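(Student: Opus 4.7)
The plan is to build the Heegaard diagram from a suitably adapted self-indexing Morse function on $Y$, exactly as in the closed case but with boundary behavior dictated by the sutured structure.

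First I would choose a Riemannian metric and a Morse function $f\co Y\to [0,3]$ satisfying the following three adaptedness properties: (i) $f^{-1}(0)=R_-$ and $f^{-1}(3)=R_+$, with $\nabla f$ pointing inward along $R_-$ and outward along $R_+$; (ii) on a collar of the annular suture part $\gamma=A(\gamma)$ we have an identification $\gamma\cong \Gamma\times[0,3]$ under which $f$ is the projection to $[0,3]$, so in particular $\Gamma=f^{-1}(3/2)\cap \gamma$; and (iii) $f$ is self-indexing and has no critical points on $\bdy Y$. Such an $f$ exists by the usual relative Morse theory: start from any smooth function equal to $0$ on $R_-$ and $3$ on $R_+$ which is linear on the collar of $\gamma$, perturb to be Morse in the interior, and then rearrange handles to be self-indexing. (The hypothesis $T(\gamma)=\emptyset$ is used here so that we may take the collar to be a product.)

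Next I would eliminate the index $0$ and index $3$ critical points using the balanced hypotheses. Each index $0$ critical point gives a local minimum $p$ in the interior of $Y$; I want to pair it with an index $1$ critical point and cancel. Since $R_\pm$ have no closed components and $Y$ has no closed components, each connected component of $Y$ meets $R_-$ in a nonempty surface with boundary, so a gradient flowline from any index $0$ critical point can be connected via index $1$ handles to $R_-$. The standard Morse-theoretic cancellation lemma then removes the index $0$ point without affecting the boundary behavior. Symmetrically, the analogous argument using $R_+$ and index $2$ critical points eliminates all index $3$ critical points.

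Now set $\Sigma\mathrel{\mathop:}= f^{-1}(3/2)$. This is a compact surface with boundary $\bdy\Sigma=\Gamma$, obtained from $R_-$ by attaching a $1$-handle for each index $1$ critical point (flowing up) and from $R_+$ by attaching a $1$-handle for each index $2$ critical point (flowing down). Let $\alphas$ be the set of circles on $\Sigma$ where the descending disks of the index $1$ critical points meet $\Sigma$, and $\betas$ the set of circles where the ascending disks of the index $2$ critical points meet $\Sigma$. By construction, surgery on the $\alpha$-circles yields $R_-$ and surgery on the $\beta$-circles yields $R_+$, so the triple $(\Sigma,\alphas,\betas)$ recovers $(Y,\Gamma)$ topologically as in the definition of $Y(\HD)$.

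It remains to verify that $\lvert\alphas\rvert=\lvert\betas\rvert$ and that $R_\pm$ have no closed components, so that $(\Sigma,\alphas,\betas)$ is a genuine sutured Heegaard diagram. The second condition is immediate from the balanced hypothesis. For the first, count Euler characteristics: $\chi(\Sigma)=\chi(R_-)-\lvert\alphas\rvert=\chi(R_+)-\lvert\betas\rvert$, and the balanced condition $\chi(R_+)=\chi(R_-)$ on each component of $Y$ forces $\lvert\alphas\rvert=\lvert\betas\rvert$ on each component, hence globally. I expect the main subtlety to be the cancellation step: one must be careful that the cancellation can be carried out without creating critical points on $\bdy Y$ or disturbing the product structure near $\gamma$, which is exactly where the no-closed-components hypotheses on $Y$ and $R_\pm$ are used.
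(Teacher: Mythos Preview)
Your approach is essentially identical to the paper's proof sketch: construct a self-indexing Morse function $f\co Y\to[0,3]$ with $f^{-1}(0)=R_-$, $f^{-1}(3)=R_+$, product behavior on a collar of $\Gamma$, and no index $0$ or $3$ critical points, and read off $\Sigma=f^{-1}(3/2)$ together with the handle-attaching circles; your Euler-characteristic check that $\lvert\alphas\rvert=\lvert\betas\rvert$ is a nice addition the paper leaves implicit. One terminological slip worth fixing: you have ``ascending'' and ``descending'' reversed---the $\alpha$-circles are where the \emph{ascending} (stable) disks of the index $1$ critical points meet $\Sigma$, and the $\beta$-circles come from the \emph{descending} (unstable) disks of the index $2$ points (the descending disk of an index $1$ point is one-dimensional and lies below level $3/2$, so it cannot meet $\Sigma$ in a circle).
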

\begin{proof}[Proof sketch]
  We will build a Morse function $f\co Y\to\RR$ with certain
  properties and use $f$ to construct the Heegaard
  diagram. Specifically, we want a Morse function $f$ so that:
  \begin{enumerate}
  \item $f\co Y\to [0,3]$.
  \item $f^{-1}(0)=R_-$ and $f^{-1}(3)=R_+$.
  \item\label{item:no-ind-0-3} $f$ has no critical points of index $0$ or $3$.
  \item\label{item:self-indexing} $f$ is \emph{self-indexing}, i.e., for any $p\in\Crit(f)$,
    $f(p)=\ind(p)$.
  \item $f|_{\nbd(\Gamma)\subset\bdy Y}\co \nbd(\Gamma)\cong [0,3]\times\Gamma\to
    [0,3]$ is projection (for some choice of identification $\nbd(\Gamma)\cong [0,3]$).
  \end{enumerate}
  To construct such a Morse function, first define $f$ by hand in a
  neighborhood of $\bdy Y$. Extend $f$ to a Morse function on all of
  $Y$; this is possible since Morse functions are generic. Finally,
  move around / cancel critical points to achieve points
  points~(\ref{item:no-ind-0-3}) and~(\ref{item:self-indexing});
  see~\cite{Milnor65:h-cobordism} for a discussion of how to do that.
  
  Fix also a metric $g$, so that $(\nabla f)|_{\nbd(\Gamma)}$ is
  tangent to $\bdy Y$.

  Now, the Heegaard diagram is given as follows:
  \begin{itemize}
  \item $\Sigma=f^{-1}(3/2)$.
  \item The $\alpha$-circles are the ascending (stable) spheres of the
    index $1$ critical points.
  \item The $\beta$-circles are the descending (unstable) spheres of
    the index $2$ critical points.
  \end{itemize}
  It follows from standard results in Morse theory that the resulting
  Heegaard diagram represents the original sutured manifold;
  see~\cite{Milnor65:h-cobordism} or~\cite{Milnor63:MorseTheory} for the relevant techniques.
\end{proof}

\begin{figure}
  \centering
  \includegraphics{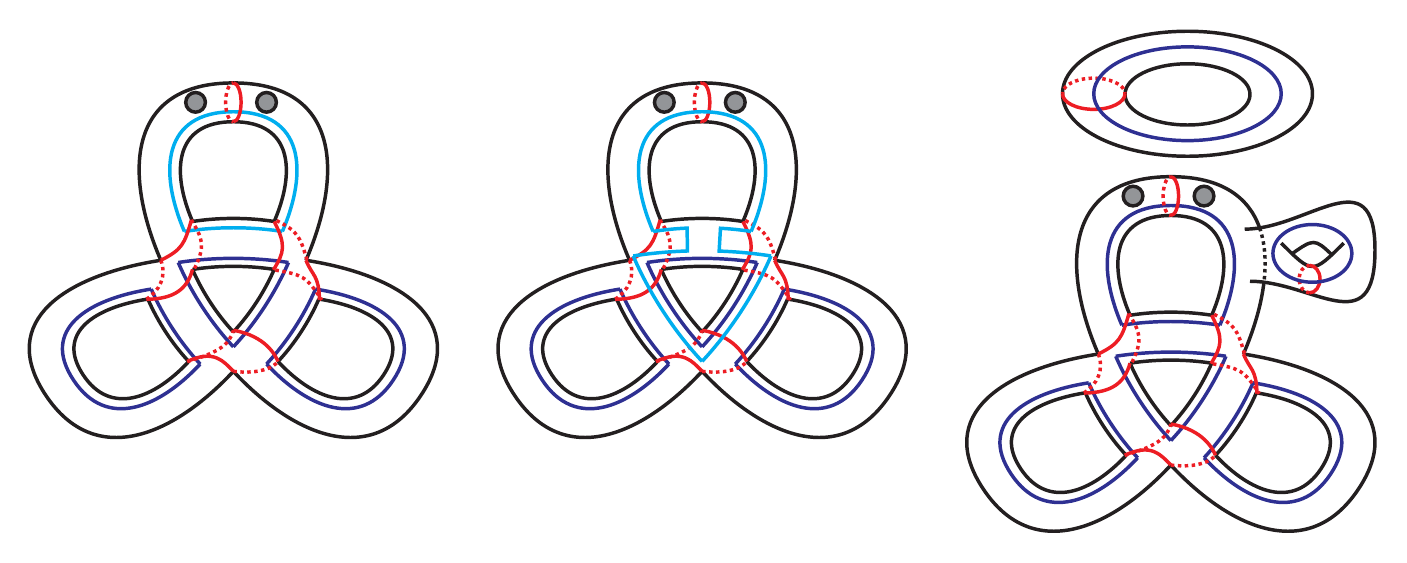}
  \caption{\textbf{Heegaard moves.} Left: a sutured Heegaard diagram
    for the trefoil complement. Center: the result of a
    handleslide among the $\beta$ circles. Right: the standard
    diagram used in the stabilization move, and the result of a
    stabilization.}
  \label{fig:HeegaardMoves}
\end{figure}

We will associate an abelian group $\SFH(\HD)$ to each sutured
Heegaard diagram $\HD$. To prove that these groups depend only on
$Y(\HD)$ (which we will not actually do), it is useful to have a set
of moves connecting any two sutured Heegaard diagrams:
\begin{theorem}\label{thm:heegaard-moves}
  If $\HD$ and $\HD'$ represent homeomorphic sutured manifolds then
  $\HD$ and $\HD'$ can be made homeomorphic by a sequence of the
  following moves:
  \begin{itemize}
  \item Isotopies of $\alphas$ and $\betas$.
  \item Handleslides of one $\alpha$-circle over another or one
    $\beta$-circle over another. (See Figure~\ref{fig:HeegaardMoves}.)
  \item Stabilizations and destabilizations, i.e., taking the
    connected sum with the diagram in Figure~\ref{fig:HeegaardMoves}.
  \end{itemize}
\end{theorem}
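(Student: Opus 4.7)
The plan is to imitate the classical Reidemeister--Singer argument (as used in the proof of invariance of $\HFa$ for closed $3$-manifolds), adapted to the sutured boundary conditions, by working in a Cerf-theoretic one-parameter family of Morse functions on $Y$ that are kept standard near $\bdy Y$.

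First I would realize each diagram by a Morse function. Given $\HD$ and $\HD'$ representing homeomorphic sutured manifolds, by Theorem~\ref{thm:HD-exists} (and its proof) we may assume they arise from self-indexing Morse functions $f_0,f_1\co Y\to[0,3]$ with gradient-like metrics $g_0,g_1$, both having no critical points of index $0$ or $3$, satisfying $f_i^{-1}(0)=R_-$, $f_i^{-1}(3)=R_+$, and standard (projection) behavior on a fixed collar $[0,3]\times\Gamma$ of $\nbd(\Gamma)$. Since such $(f,g)$ are constructed from a prescribed collar near $\bdy Y$, I may further arrange that $f_0$ and $f_1$ agree on this collar, and that the homeomorphism identifying $Y(\HD)$ with $Y(\HD')$ is the identity there.

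Next I would run Cerf theory rel boundary. The space of pairs $(f,g)$ of smooth functions $f\co Y\to [0,3]$ with the prescribed collar behavior and gradient-like metrics is contractible, so I can choose a one-parameter family $(f_t,g_t)_{t\in[0,1]}$ connecting $(f_0,g_0)$ to $(f_1,g_1)$. After a generic perturbation rel boundary, the family $(f_t,g_t)$ is a Cerf family: for all but finitely many $t$ the function $f_t$ is Morse and $g_t$ satisfies the Morse--Smale transversality condition between index-$1$ and index-$2$ critical points; the finitely many exceptional times are either \emph{birth--death} moments (creating or cancelling a pair of critical points of consecutive indices) or \emph{handle-slide} moments (the ascending/descending manifolds of two critical points of the same index fail to be disjoint on $\Sigma$, or two critical points momentarily share the same critical value). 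Crucially, the prescribed boundary behavior prevents any critical point from crossing $\bdy Y$, and I can perturb so that no birth of an index $0$/$1$ or $2$/$3$ pair occurs (or kill any such pair immediately by the cancellation lemma, as in~\cite{Milnor65:h-cobordism}), so the only births/deaths are of index-$1$/$2$ pairs.

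Then I would translate the Cerf events into Heegaard moves. Between singular times, the diagram $\HD_t=(f_t^{-1}(3/2),\alphas_t,\betas_t)$ varies by an ambient isotopy of the $\alpha$- and $\beta$-circles in $\Sigma$. At a handleslide moment where two index-$1$ (resp.\ index-$2$) ascending (resp.\ descending) disks meet on $\Sigma$, one of the corresponding $\alpha$-circles (resp.\ $\beta$-circles) slides over the other, which is exactly the handleslide move. A moment where an index-$1$ and an index-$2$ critical value cross does not change the Heegaard diagram (one can re-normalize the function to remain self-indexing by a further isotopy of the level set $\Sigma$). Finally, a birth of an index-$1$/$2$ pair adds an $\alpha$-circle and a $\beta$-circle meeting in a single point inside a small ball disjoint from the existing diagram, which is precisely the stabilization move in Figure~\ref{fig:HeegaardMoves}; a death is a destabilization. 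The composition of all these local moves gives a sequence from $\HD$ to $\HD'$, modulo a final overall isotopy.

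The main obstacle is the boundary bookkeeping: verifying that generic one-parameter families can indeed be achieved rel the collar on $\nbd(\Gamma)$ (so that $R_\pm$ remain exactly the index-$0$ and index-$3$ level sets of $f_t$ for all $t$) and that all birth--death events can be pushed to be of index-$1$/$2$ type without disturbing this collar structure. Once those points are granted, the identification of Cerf events with the three listed Heegaard moves is standard.
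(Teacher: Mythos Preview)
Your proposal is correct and follows the standard Cerf-theoretic approach; in fact you have supplied far more detail than the paper does. The paper does not give a proof of this theorem at all: it simply remarks, ``Again, the proof I know uses Morse theory,'' and moves on. Your one-parameter Morse/Cerf argument rel the collar of $\bdy Y$ is exactly how one would make that remark precise, so there is no discrepancy to report.
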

Again, the proof I know uses Morse theory.

\begin{remark}
  If one wants to study maps on sutured Floer homology associated to
  cobordisms~\cite{Juhasz:Cobordism}, one needs a more refined
  statement than
  Theorem~\ref{thm:heegaard-moves}. See~\cite{JT:Naturality}.
\end{remark}

\subsection{Holomorphic disks in the symmetric product and \texorpdfstring{$\SFH$}{SFH}}
Brief version:
\begin{definition}(\cite{Juhasz06:Sutured} following~\cite{OS04:HolomorphicDisks}) Fix a sutured Heegaard diagram
  $(\Sigma,\alpha_1,\dots,\alpha_n,\beta_1,\dots,\beta_n)$. Then
  $\SFH(\HD)$ is the Lagrangian intersection Floer homology of
  \[
  T_\alpha=\alpha_1\times\cdots\times\alpha_n,\
  T_\beta=\beta_1\times\cdots\times \beta_n\subset \Sym^n(\Sigma),
  \]
  the $n\th$ symmetric product of $\Sigma$.
\end{definition}
(Recall that the symmetric product $\Sym^n(\Sigma)$ is the quotient of
$\Sigma^{\times n}=\Sigma\times\cdots\times\Sigma$ by the action of
$S_n$ permuting the factors; points in $\Sym^n(\Sigma)$ are unordered
$n$-tuples of points in $\Sigma$, possibly with repetition.)

Longer version:
\subsubsection{Generators}
As its name suggests, the Lagrangian intersection Floer homology is
the homology of a complex $\SFC(\HD)$ generated by the intersection
points between $T_\alpha$ and $T_\beta$:
\[
\SFC(\HD)=\Field\langle T_\alpha\cap T_\beta\rangle.
\]
Unpacking the definition, a point in $T_\alpha\cap T_\beta$ is an
$n$-tuple of points $\{x_i\}_{i=1}^n$, where $x_i\in \alpha_i\cap
\beta_{\sigma(i)}$ for some permutation $\sigma\in S_n$.

\subsubsection{Differential}\label{sec:differential}
The differential, unfortunately, is harder: it counts holomorphic
disks. Recall that an \emph{almost complex structure} on $M$ is a map
$J\co TM\to TM$ so that $J^2=-\Id$. For instance, given a complex
manifold, multiplication by $i$ on the tangent spaces is an almost
complex structure.

To count holomorphic disks, one must work with an appropriate almost complex
structure $J$ on $\Sym^n(\Sigma)$:
\begin{enumerate}
\item The manifold $\Sym^n(\Sigma)$ can be given a reasonably natural
  smooth structure, and in fact has a symplectic form
  $\omega$. (Here, we delete the boundary of $\Sigma$, so $\Sigma$ is a non-compact surface without boundary.) Moreover, the form $\omega$ can be chosen so that
  $T_\alpha$ and $T_\beta$ are Lagrangian~\cite{Perutz07:HamHand}.\footnote{The original
    formulation of Heegaard Floer homology avoided using this fact, by a short
    but clever argument~\cite[Section 3.4]{OS04:HolomorphicDisks}. Also, Perutz's proof is not given in the setting of sutured Floer homology, but rather Ozsv\'ath-Szab\'o's original setting.} In order to know that the moduli spaces of
  holomorphic disks are compact (or have nice compactifications) one
  wants $J$ to be \emph{compatible} with $\omega$, in the sense that
  $\omega(v,Jw)$ is a Riemannian metric.
\item One wants $J$ to be generic enough that the moduli spaces of
  holomorphic disks are transversely cut out.
\end{enumerate}

In practice, one can often work with a \emph{split} almost complex
structure. That is, fix an almost complex structure $j$ on
$\Sigma$. The almost complex structure $j$ induces an almost complex
structure $j^{\times n}$ on $\Sigma^{\times n}$. There is a unique
almost complex structure $\Sym^n(j)$ on $\Sym^n(\Sigma)$ so that the
projection map $\Sigma^{\times n}\to \Sym^n(\Sigma)$ is $(j^{\times
  n},\Sym^n(j))$-holomorphic.

The point of choosing a complex structure is so that we can talk about
holomorphic disks in $\Sym^n(\Sigma)$: a continuous map $u\co \bD^2\to
\Sym^n(\Sigma)$ is \emph{$J$-holomorphic} if $J\circ du = du\circ j$
at all interior points of $\bD^2$, where $j$ is the almost complex
structure on $\bD^2=\{z\in \CC\mid |z|\leq 1\}$ induced by the complex
structure on $\CC$.

\begin{definition}\label{def:disk}
  Given $\x,\y\in T_\alpha\cap T_\beta$, let $\cM(\x,\y)$ be the set
  of non-constant $J$-holomorphic disks $u\co \bD^2\to\Sym^n(\Sigma)$
  so that
  \begin{itemize}
  \item $u(-i)=\x$,
  \item $u(+i)=\y$,
  \item $u(\{z\in\bdy\bD^2\mid \Re(z)\geq 0\})\subset T_\alpha$ and
  \item $u(\{z\in\bdy\bD^2\mid \Re(z)\leq 0\})\subset T_\beta$.
  \end{itemize}
  There is an $\RR$-action on $\cM(\x,\y)$, coming from the
  1-parameter family of conformal transformations of $\bD^2$ fixing
  $\pm i$. (If we identify $\bD^2\setminus\{\pm i\}$ with
  $[0,1]\times\RR$, this $\RR$-action is simply translation in $\RR$.)
\end{definition}

\begin{definition}
  Suppose that $\HD$ represents a RHT sutured 3-manifold. Then define
  $\bdy\co \SFC(Y)\to \SFC(Y)$ by
  \[
  \bdy(\x)=\sum_\y\left(\#\cM(\x,\y)/\RR\right)\y.
  \]
  Here, $\#$ denotes the number of elements modulo $2$; if
  $\cM(\x,\y)/\RR$ is infinite then we declare $\#\cM(\x,\y)/\RR=0$.
\end{definition}

At first glance, this definition looks hard to use: how does one
understand a holomorphic disk in $\Sym^g(\Sigma)$? Somewhat
miraculously, these disks often can be understood, as we will see in
the next section.

If $\HD$ represents a non-RHT sutured 3-manifold, one needs a slightly
more complicated definition. Maps $\bD^2\to \Sym^g(\Sigma)$ decompose
into homotopy classes (corresponding to elements of $H_2(Y)$), and
$\cM(\x,\y)$ is a disjoint union over homotopy classes $\phi$,
$\cM(\x,\y)=\amalg_\phi \cM^\phi(\x,\y)$. One then defines the differential by
$\bdy(\x)=\sum_{\y}\sum_\phi \left(\#\cM^\phi(\x,\y)/\RR\right)\y$,
with the same convention about $\#$ as before. One also needs to add a
requirement on the sutured Heegaard diagram, called
\emph{admissibility}, which ensure that $\#\cM^\phi(\x,\y)=0$ for all
but finitely-many homotopy classes $\phi$. (Admissibility is needed to
get well-defined invariants even if the counts happen to be finite for
other reasons.) 

\subsection{First computations of sutured Floer homology}
\subsubsection{Some \texorpdfstring{$n=1$}{n=1} examples}
If $n=1$ we are just looking at disks in $\Sym^1(\Sigma)=\Sigma$. 

\begin{lemma}\label{lem:moduli-in-Sigma}
  The $0$-dimensional moduli spaces of holomorphic disks in
  $(\Sigma,\alpha\cup\beta)$ correspond to isotopy classes of
  orientation-preserving immersions $\bD\to\Sigma$ with boundary as
  specified in Definition~\ref{def:disk}, $90^\circ$ corners at $x$ and $y$, and which are smooth immersions at all other boundary points.
\end{lemma}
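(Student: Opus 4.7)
The plan is to establish the correspondence in both directions, using the Riemann mapping theorem one way and a Maslov-index/branch-point argument the other way.

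For the forward direction, take an orientation-preserving immersion $u\co \bD\to\Sigma$ satisfying the listed boundary, corner, and smoothness conditions. Pulling back the complex structure $j$ on $\Sigma$ through $u$ gives $\bD$ the structure of a Riemann surface with two $90^\circ$ boundary corners at $\pm i$ and smooth arcs of boundary in between. The Riemann mapping theorem for simply connected domains with two corners produces a biholomorphism $\phi\co (\bD^2,j_{std})\to (\bD,u^\ast j)$ carrying $\pm i$ to $\pm i$; this $\phi$ is unique up to the $1$-parameter group $\Aut(\bD^2,\{\pm i\})\cong\RR$ of conformal automorphisms fixing $\pm i$. Then $u\circ\phi\co\bD^2\to\Sigma=\Sym^1(\Sigma)$ is holomorphic and satisfies the conditions of Definition~\ref{def:disk}, and the ambiguity in $\phi$ is exactly the $\RR$-action that is quotiented out in $\cM(\x,\y)/\RR$. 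Thus we get a well-defined map from isotopy classes of immersions to $\cM(\x,\y)/\RR$.

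For the reverse direction, take a non-constant $j$-holomorphic disk $u$ representing an isolated point of $\cM(\x,\y)/\RR$. Any non-constant holomorphic map between Riemann surfaces is a branched cover of its image, hence an immersion away from a discrete set of interior critical points (and possibly finitely many boundary branch points away from $\pm i$). A Riemann--Hurwitz-type computation shows that each interior branch point raises the expected dimension of $\cM(\x,\y)$ by $2$, while each additional boundary branch point raises it by $1$. Since an isolated element of $\cM(\x,\y)/\RR$ must have Maslov index $1$, $u$ can have no branch points, so $u$ is an immersion. Moreover, the local model of $\alpha$ and $\beta$ crossing transversely at $x$ and $y$ forces the corners to be exactly $90^\circ$, and the open mapping theorem guarantees that $u$ is orientation-preserving onto its image.

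Combining the two directions, two holomorphic disks define the same point of $\cM(\x,\y)/\RR$ precisely when they differ by a reparametrization of $\bD^2$ fixing $\pm i$, which via the Riemann mapping identification corresponds to ambient isotopy of the immersions; this gives the desired bijection. The main obstacle I expect is the index/branch-point computation in the reverse direction: ruling out branch points cleanly requires either a careful Riemann--Hurwitz formula for disks with corners or invoking the Lipshitz index formula specialized to $n=1$. A secondary (but standard) technical point is the Riemann mapping theorem with boundary corners; I would cite a standard reference rather than reprove it.
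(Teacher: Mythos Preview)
Your proposal is correct and follows the approach the paper indicates: the paper does not actually prove this lemma but only remarks ``This follows from the Riemann mapping theorem---exercise'' and then lists it again among the suggested exercises (``State Lemma~\ref{lem:moduli-in-Sigma} precisely, and prove it''). Your argument is a reasonable solution to that exercise: the forward direction is exactly the Riemann mapping step the paper hints at, and your reverse direction (ruling out branch points via the Maslov index) is the standard complement, which can indeed be extracted from the $n=1$ case of the Lipshitz index formula the paper cites elsewhere. The self-identified obstacle---making the branch-point/index count precise---is real but routine; the equivalence between ``isotopy of immersions'' and ``reparametrization by $\Aut(\bD^2,\{\pm i\})$'' could also be stated a bit more carefully, but the idea is right.
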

(This follows from the Riemann mapping theorem---exercise.)

Here are some examples.

\begin{figure}
  \centering
  \includegraphics{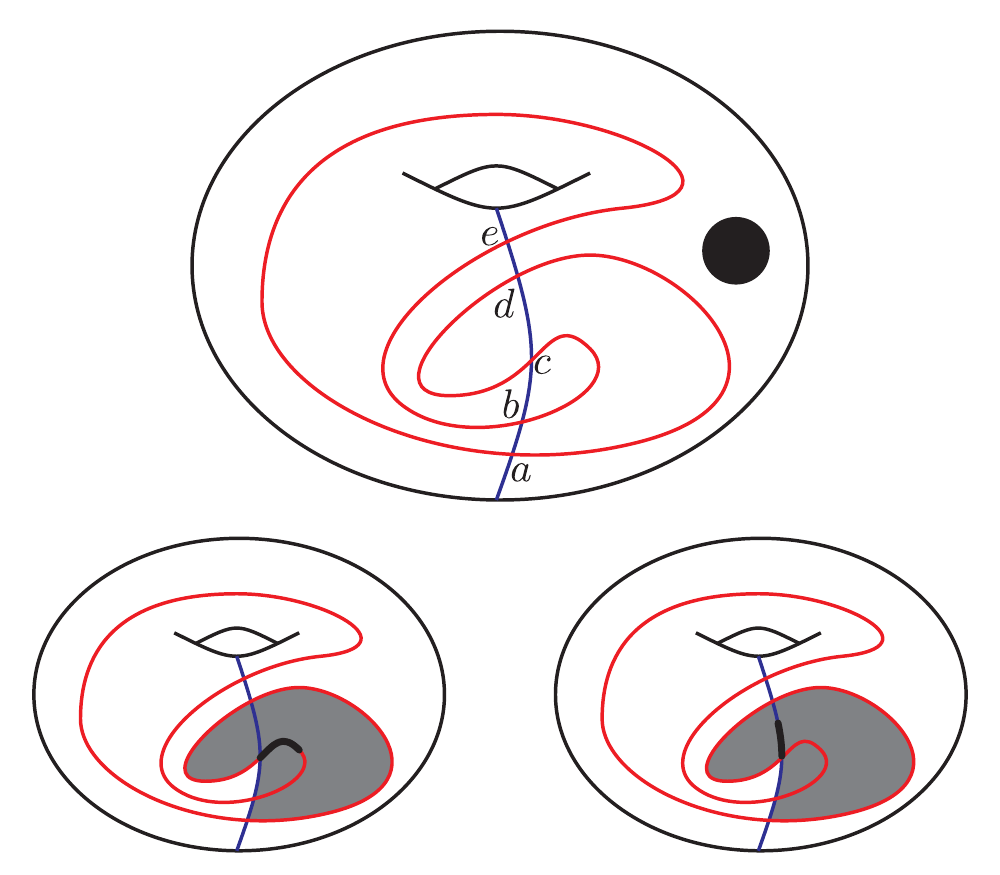}
  \caption{\textbf{A genus-1 Heegaard diagram for
      $S^3\setminus\bD^3$.} Top: the diagram, with generators
    labeled. The big, black disk indicates a hole in $\Sigma$. Bottom:
    a hint of why $\bdy^2=0$. This diagram is adapted
    from~\cite{LOTnotes}.}%
  \label{fig:S3HD-moduli}
\end{figure}

Consider the diagram in Figure~\ref{fig:S3HD-moduli}. This represents
$S^3\setminus \bD^3=\bD^3$, with a single suture on the boundary
$S^2$. The complex $\SFC(\HD)$ has five generators, $a,b,c,d,e$. The
differential is given by 
\begin{align*}
  \bdy(a)&=b+d & \bdy(b)&=c & \bdy(c)&=0\\
  \bdy(d)&=c & \bdy(e)&=b+d
\end{align*}
or graphically
\[
\xymatrix{
  a\ar[d]\ar[drr] & & e\ar[dll]\ar[d]\\
  b\ar[dr] & & d\ar[dl]\\
  & c &
}
\]
So,
\[
\SFH(\bD^3)\cong \Field.
\]

See Figure~\ref{fig:S3HD-moduli} for a hint of why $\bdy^2=0$,
and~\cite[Section 3.1]{LOTnotes} for further discussion of this point.

Note that the fact that the maps must be orientation-preserving means
that the disk from $a$ to $b$ can \emph{not} be read backwards as a
disk from $b$ to $a$.

\begin{figure}
  \centering
  \includegraphics{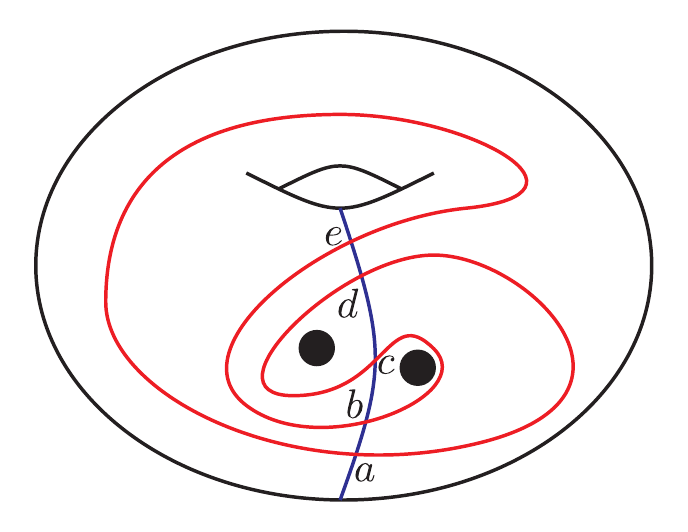}
  \caption{\textbf{A diagram for the figure-8 complement.} The two
    black disks indicate holes in $\Sigma$.}
  \label{fig:fig-8-comp}
\end{figure}
Next, consider the Heegaard diagram in
Figure~\ref{fig:fig-8-comp}. This is the same as
Figure~\ref{fig:S3HD-moduli}, except with different holes. With the
new holes, the differential becomes trivial: the disks we counted
before now have holes in them.  So,
\[
\SFH(S^3\setminus 4_1,\Gamma)\cong (\Field)^5.
\]

These examples can be generalized to compute the Floer homology of
(the complement of) any 2-bridge knot or, more generally, any
(1,1)-knot.

\subsubsection{A stabilized diagram for \texorpdfstring{$\bD^3$}{the three-ball}}\label{sec:genus-2-ball}
Consider the diagram $\HD$ in Figure~\ref{fig:big-D3}. This diagram
again represents $\bD^3$, but now has genus $2$. The complex
$\SFC(\HD)$ has three generators: $\{r,v\}$, $\{s,v\}$ and
$\{t,v\}$. (Notice that one of the $\alpha$-circles is disjoint from
one of the $\beta$-circles, reducing the number of generators.) Since
$\SFH(\HD)=\SFH(\bD^3)=\Field$, the differential must be nontrivial.

There are no obvious bigons in the diagram (or in $\Sym^1(\Sigma)$),
but there is a disk in $\Sym^2(\Sigma)$. Consider the shaded region
$A$ in the middle picture in Figure~\ref{fig:big-D3}. Topologically,
$A$ is an annulus; it inherits a complex structure from the complex
structure on $\Sigma$. I want to produce a holomorphic map $\bD^2\to
\Sym^2(A)$ giving a term $\{s,v\}$ in $\bdy\{t,v\}$. Consider the
result $A_d$ of cutting $A$ along $\alpha_2$ starting at $v$ for a
distance $d$. The key point is the following:

\begin{lemma}\label{lem:annulus-involution}
  There is (algebraically) one length $d$ of cut so that $A_d$ admits a
  holomorphic involution $\tau$ which takes $\alpha$-arcs to $\alpha$-arcs
  (and $\beta$-arcs to $\beta$-arcs and corners to corners).
\end{lemma}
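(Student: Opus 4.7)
The plan is to reduce the problem to a one-parameter moduli count via uniformization of $A_d$ as a standard round annulus, and then apply an intermediate value argument in the cut length $d$.

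First I would observe that for $d$ in the open interval on which the cut has not yet reached the opposite boundary of $A$, the surface $A_d$ is topologically an annulus carrying the complex structure inherited from $\Sigma$. By the Riemann mapping theorem for doubly-connected domains, there is a unique $R(d) > 1$ and a biholomorphism $\varphi_d \colon A_d \to \{z\in\CC\mid 1\leq |z|\leq R(d)\}$, and $R$ depends continuously on $d$. Under $\varphi_d$ the marked corners on $\partial A_d$ (the original corners of $A$ together with the new ones created at the two sides of the slit) land at specific points on $|z|=1$ and $|z|=R(d)$.

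Next I would classify candidate involutions: any holomorphic involution of $\{1\leq|z|\leq R\}$ that swaps the two boundary circles has the form $z\mapsto Re^{i\phi}/z$ for some $\phi\in\RR$, and such a map is automatically an involution. Transported back via $\varphi_d$, such a $\tau$ must pair up the $\alpha$-arcs and $\beta$-arcs on the inner boundary of $A_d$ with those on the outer boundary, with corners going to corresponding corners. Fixing $\phi$ so that one designated pair of corners matches, the remaining corner-matching condition becomes a single real equation in $d$, which I shall call $\Phi(d)=0$.

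Finally I would analyze $\Phi$ at the two endpoints of the admissible range of $d$. As $d \to 0^+$ the two corners at the tip of the cut coalesce and $R(d)$ and the corner positions admit an explicit limit; as $d$ approaches the maximal length (where the slit first meets the opposite arc), $A_d$ degenerates to a disk and $R(d)\to\infty$ (or $\to 1$), with the corners on the two circles limiting to different symmetric configurations. A direct check of these two limits shows that $\Phi$ takes opposite signs at the endpoints, so by the intermediate value theorem $\Phi$ vanishes at least once. Transversality of the complex structure (genericity) upgrades this to the statement that, counted modulo $2$, there is exactly one such $d$, which is what ``algebraically one'' means. The main obstacle is the boundary analysis: honestly computing the asymptotics of $R(d)$ and of the corner positions under $\varphi_d$ as $d$ approaches each endpoint, and verifying that $\Phi$ crosses zero an odd number of times, rather than merely a nonzero number of times.
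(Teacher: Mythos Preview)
The paper does not prove this lemma; it refers to \cite[Lemma 9.4]{OS04:HolomorphicDisks} and assigns the argument as an exercise. Your outline is essentially the standard argument from that reference and is correct in structure.

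One sharpening that makes the endpoint analysis less mysterious: once $A_d$ is uniformized to $\{1\le|z|\le R\}$, the map $z\mapsto Re^{i\phi}/z$ preserves angular lengths, so the condition that $\tau$ carry $\alpha$-arcs to $\alpha$-arcs is exactly that the total angular $\alpha$-length on the inner circle equals that on the outer circle; once those agree, a unique $\phi$ aligns the arcs and the corners come along automatically. Thus you may take $\Phi(d)$ to be the difference of these two angular lengths. In this particular picture the boundary component of $A$ through $v$ lies entirely on a single circle, so at $d=0$ one of the two lengths is $0$ (or $2\pi$) while the other is strictly between; as $d$ grows the slit contributes arc of the opposite type to that component and drives the difference through zero. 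This replaces the delicate asymptotics you flag as the main obstacle with a direct length comparison.

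Two small corrections: the new corners created by the slit sit at its base (the two sides of $v$), not at its tip, which is a smooth $2\pi$-angle point of $\partial A_d$; and as $d$ approaches its maximum the remaining neck pinches, so $R(d)\to\infty$ rather than $\to 1$. Your appeal to genericity of the complex structure for the mod-$2$ count is exactly what ``algebraically one'' means here.
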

This is an adaptation of the proof of~\cite[Lemma
9.4]{OS04:HolomorphicDisks}. See Exercise~\ref{exercise:annulus}.

Given Lemma~\ref{lem:annulus-involution}, we can construct the map
$u\co \bD^2\to \Sym^2(A)$ as follows. The quotient $A_d/\tau$ is
analytically isomorphic to $\bD^2$, via an isomorphism taking the
image of $t$ and one copy of $v$ to $-i$ and the image of $s$ and the
other copy of $v$ to $+i$ (and hence the $\alpha$-arc to the right
half of $\bdy\bD^2$). This gives a 2-fold branched cover $u_\bD\co
A_d\to \bD^2$. Now, the map $u$ sends a point $x\in \bD^2$ to
$u_\bD^{-1}(x)\in \Sym^2(A)$. It is immediate that $u$ is holomorphic with respect to the split almost complex structure.

\begin{figure}
  \centering
  \includegraphics{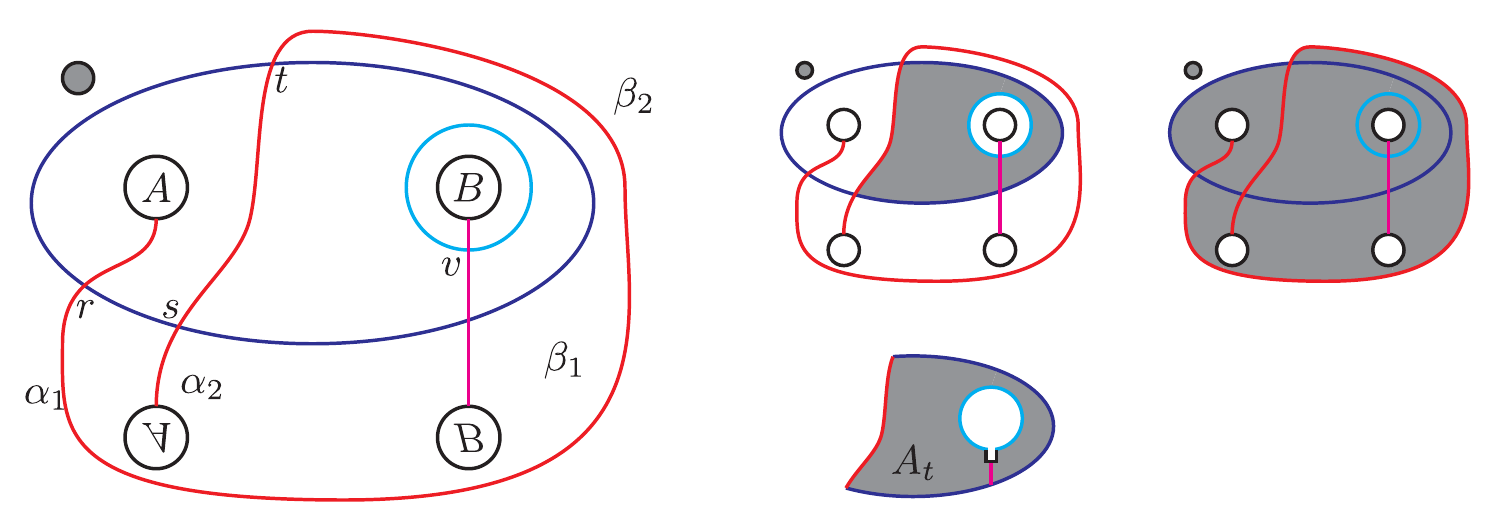}
  \caption{\textbf{A more complicated diagram for $\bD^3$.} The
    $\alpha$-circles are in red and the $\beta$-circles are in
    blue; intersection points which form parts of generators are
    labeled. On the right are two interesting domains, the first from
    $\{t,v\}$ to $\{s,v\}$ and the second from $\{r,v\}$ to
    $\{t,v\}$. The annulus $A_d$ is also shown.}
  \label{fig:big-D3}
\end{figure}

This example illustrates an important principle: any holomorphic disk
$u\co (\bD^2,\bdy\bD^2)\to (\Sym^g(\Sigma),T_\alpha\cup T_\beta)$ has
a shadow in $\Sigma$, in the form of an element of
$H_2(\Sigma,\alphas\cup\betas)$ (i.e., a cellular $2$-chain). This
shadow is called the \emph{domain} of the disk $u$. The multiplicity
of the domain $D(u)$ at a point $p\in \Sigma$ is given by the
intersection number $u\cdot [\{p\}\times \Sym^{g-1}(\Sigma)]$.

Note that the domain has multiplicity $0$ near $\bdy
\Sigma$. Moreover, it follows from \emph{positivity of
  intersections}~\cite{MicallefWhite95:intersection-positivity} that
the coefficients in the domain of a holomorphic $u$ are always
non-negative (at least if one works with an almost complex structure
on $\Sym^g(\Sigma)$ which is close to a split one, or agrees with a
split one on a large enough subset of $\Sym^g(\Sigma)$ so that
$\{z\}\times\Sym^{g-1}(\Sigma)$ is holomorphic for at least one $z$ in
each component of $\Sigma\setminus(\alphas\cup\betas)$; in Heegaard
Floer theory one always makes this restriction). Finally, the domain
has a particular kind of behavior near the generators connected by
$u$: if $u$ connects $\x$ to $\y$ then $\bdy(\bdy D(u)\cap
\alphas)=\y-\x=-\bdy(\bdy D(u)\cap \betas)$.

From these observations, it is fairly easy to see that the only other
possible domain of a holomorphic curve is shown on the far right of
Figure~\ref{fig:big-D3}. This domain connects $\{r,v\}$ to
$\{t,v\}$. But a curve in this homotopy class would violate
$\bdy^2=0$, so the algebraic number of such curves is $0$.

It turns out that one can read the dimension of the moduli space of
disks from the domain $D(u)$: see~\cite[Corollary
4.10]{Lipshitz06:CylindricalHF}.

Of course, in general, computations are more complicated: domains do
not need to be planar (the domain in the right of
Figure~\ref{fig:big-D3} is not planar), and branched covers of degree
greater than $2$ are harder to analyze. Because direct computations
are so hard, there has been a lot of interest in both theoretical and
practical techniques for computing Heegaard Floer homology.

\subsubsection{Grid diagrams}\label{sec:grid-diags}
Consider a toroidal grid diagram $\HD=(\Sigma,\alphas,\betas)$ as in Example~\ref{eg:grid-diags}, and
let $n$ be the number of $\alpha$-circles (which is, of course, also
the number of $\beta$-circles). Since each $\alpha_i$ intersects each
$\beta_j$ in a single point, the generators $\{x_i\in \alpha_i\cap
\beta_{\sigma(i)}\}$ correspond to the permutations $\sigma\in
S_n$. (This correspondence is not quite canonical, since we are using
the indexing of the $\alpha$-circles and $\beta$-circles.)

Next, consider two generators $\x$ and $\y$ such that:
\begin{itemize}
\item $\x\cap \y$ consists of $(n-2)$ points.
\item There is a rectangle $r$ in $\Sigma$ so that the lower-left and
  upper-right corners of $r$ are $\x\setminus \y$, and the upper-left
  and lower-right corners of $r$ are $\y\setminus \x$. (This is a
  meaningful statement.)
\item The interior of $r$ is disjoint from $\x$ (and hence also from $\y$).
\end{itemize}
We will say that $\x$ and $\y$ are \emph{connected by an empty
  rectangle}, and call $r$ an \emph{empty rectangle from $\x$ to $\y$}.

Given an empty rectangle $r$, we can find a holomorphic disk (with
respect to the split complex structure) with domain $r$ as
follows. First, there is a unique holomorphic 2-fold branched cover
$u_\bD\co r\to \bD^2$ sending the $\x$-corners of $r$ to $-i$ and the
$\y$-corners of $r$ to $+i$; see Exercise~\ref{exercise:rect}. (This
map automatically sends the $\alpha$-boundary of $r$ to the right half
of $\bdy \bD^2$ and the $\beta$-boundary to the left half.) Since the
preimage of any point in $\bD^2$ is two points in $r$ (counted with
multiplicity---the branch point is a multiplicity-2 point), we can
view $(u_\bD)^{-1}$ as a map $\bD^2 \to \Sym^2(r)$. There is an
inclusion $\Sym^2(r)\hookrightarrow \Sym^2(\Sigma)\hookrightarrow
\Sym^g(\Sigma)$, where the second inclusion sends $p$ to $p\times
(\x\cap \y)$. (Remember: $p$ is a pair of points in $\Sigma$, and
$\x\cap \y$ is an $(n-2)$-tuple of points in $\Sigma$, so $p\times
(\x\cap\y)$ is an $n$-tuple of points in $\Sigma$. Forgetting the
ordering gives a point in $\Sym^n(\Sigma)$.)

Amazingly, these are the only relevant holomorphic curves in the grid
diagram:
\begin{theorem}\cite{MOS06:CombinatorialDescrip} 
  The rigid holomorphic disks in the symmetric product of a toroidal grid diagram correspond
  exactly to the empty rectangles. In particular, the differential on
  $\SFC(\HD)$ counts empty rectangles in $(\Sigma,\alphas\cup\betas)$.
\end{theorem}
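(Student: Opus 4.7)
The plan is to establish the correspondence in two directions: first, that every empty rectangle contributes exactly one holomorphic disk (modulo the $\RR$-action); second, that these are the only rigid disks.

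For the first direction, I would follow the branched-cover construction already used in Section~\ref{sec:genus-2-ball}. Given an empty rectangle $r$ from $\x$ to $\y$, the Riemann mapping theorem produces a unique (up to reparameterization) holomorphic $2$-fold branched cover $u_\bD\co r\to \bD^2$ sending the $\x$-corners to $-i$, the $\y$-corners to $+i$, the $\alpha$-sides to $\{\Re z\geq 0\}\cap\bdy\bD^2$, and the $\beta$-sides to $\{\Re z\leq 0\}\cap\bdy\bD^2$. Viewed as a map $\bD^2\to \Sym^2(r)$, the preimage $u_\bD^{-1}$ composed with the inclusion $\Sym^2(r)\hookrightarrow \Sym^n(\Sigma)$, $p\mapsto p\cup(\x\cap\y)$, gives a holomorphic disk for the split complex structure. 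This yields exactly one element of the moduli space modulo $\RR$ per empty rectangle.

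For the second direction, I would use the dimension formula $\mu(D)=e(D)+n_\x(D)+n_\y(D)$ from~\cite[Corollary 4.10]{Lipshitz06:CylindricalHF}. The key simplification in a grid diagram is that the Euler measure identically vanishes: by positivity of intersection, $D$ is a non-negative integer combination of the unit squares of the grid, and each such square is a disk with four $90^\circ$ corners and Euler measure $1-4/4=0$. Hence $e(D)=0$, so a rigid domain ($\mu(D)=1$) must satisfy $n_\x(D)+n_\y(D)=1$. Since each point measure is a non-negative element of $\tfrac{1}{4}\NN$, and since the boundary constraints $\bdy(\bdy D\cap\alphas)=\y-\x=-\bdy(\bdy D\cap\betas)$ dictate which intersection points on $\alphas\cup\betas$ can be corners of $D$, a combinatorial case analysis forces $D$ to be a single rectangle of multiplicity $1$, with two corners at $\x$-points and two at $\y$-points; any additional $\x$-point lying in the interior of this rectangle would push $n_\x(D)+n_\y(D)$ to at least $2$, so $D$ must be empty.

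I expect the combinatorial enumeration in the second direction to be the main obstacle: one must rule out disconnected domains, domains that wind nontrivially around the torus, and higher-complexity polygonal domains, all by systematically exploiting non-negativity of multiplicities together with the constraint that $D$ has coefficient $0$ near each puncture of $\Sigma$. Finally, transversality for the split complex structure is automatic in the rigid case, since the explicit branched cover constructed above already realizes the expected dimension of the moduli space; no further perturbation of the complex structure is needed. Combining the two directions yields the stated bijection between rigid holomorphic disks and empty rectangles, and thus the combinatorial formula for the differential on $\SFC(\HD)$.
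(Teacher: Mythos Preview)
Your proposal is correct and follows essentially the same approach the paper indicates: the paper does not give a full proof but remarks that ``it uses an index formula and some combinatorics to show that the domain of a rigid holomorphic curve in a toroidal grid diagram must be a rectangle,'' which is exactly your strategy via the Lipshitz formula $\mu(D)=e(D)+n_\x(D)+n_\y(D)$ together with the observation that every elementary region has Euler measure zero. Your identification of the combinatorial case analysis as the main remaining work, and of the punctures as what prevents domains from wrapping around the torus, is accurate; the only place I would tighten the exposition is the transversality claim at the end---existence of one representative does not by itself establish regularity, so you should either invoke automatic transversality for embedded index-$1$ curves in this setting or argue the algebraic count directly from uniqueness of the branched cover.
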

The proof turns out not to be especially hard: it uses an index
formula and some combinatorics to show that the domain of a rigid
holomorphic curve in a toroidal grid diagram must be a
rectangle. The result, however, is both surprising and useful. 

A similar construction is possible for other
3-manifolds~\cite{SarkarWang07:ComputingHFhat}. There is also a
forthcoming textbook about grid diagrams and Floer homology~\cite{OSS:book}.

\subsection{First properties}
\begin{theorem}\cite[Theorem 7.2]{Juhasz06:Sutured}
  The map $\bdy\co \SFC(\HD)\to\SFC(\HD)$ satisfies $\bdy^2=0$.
\end{theorem}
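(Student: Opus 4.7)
The plan is the standard moduli-space argument: identify $\bdy^2$ as counting the ends of the $1$-dimensional moduli spaces of holomorphic disks, then show these are the only ends by ruling out all other possible degenerations.

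First I would fix $\x,\z \in T_\alpha \cap T_\beta$ and consider the coefficient of $\z$ in $\bdy^2(\x)$. Expanding the definition, this coefficient is
\[
\sum_{\y} \#\bigl(\cM(\x,\y)/\RR\bigr) \cdot \#\bigl(\cM(\y,\z)/\RR\bigr) \pmod 2,
\]
which counts pairs $(u_1,u_2)$ of rigid (expected-dimension-$1$) holomorphic disks connected at an intermediate generator $\y$. The goal is to identify these pairs with the ends of $\cM(\x,\z)/\RR$ for expected-dimension-$2$ moduli spaces (working homotopy class by homotopy class in the non-RHT setting, using admissibility to reduce to finitely many classes). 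Since a compact $1$-manifold has an even number of boundary points, this count will vanish mod $2$.

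The key step is the analysis of the Gromov compactification $\ol{\cM(\x,\z)/\RR}$. For a generic compatible almost complex structure $J$ on $\Sym^n(\Sigma)$ close to the split one, the expected-dimension-$2$ moduli spaces are transversely cut out $1$-manifolds after quotienting by $\RR$, and their compactifications can acquire boundary from four sources: (i) two-story broken flows $\x \to \y \to \z$; (ii) sphere bubbles in $\Sym^n(\Sigma)$; (iii) disk bubbles with boundary on $T_\alpha$ or $T_\beta$; and (iv) boundary degenerations, i.e.\ bubbled disks with boundary entirely on $T_\alpha$ or $T_\beta$ splitting off. Source (i) contributes exactly the terms counted above, so I need to rule out (ii)--(iv).

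The hard part will be excluding bubbling and boundary degenerations; the rest is essentially bookkeeping. For (ii), sphere bubbles occur in codimension $\geq 2$ for a generic $J$ because $\Sym^n(\Sigma)$ is not monotone in a way that forces them into codimension $1$; in practice one uses that the diagonal stratum (where the complex structure differs from split) has high codimension and avoids generic $1$-parameter families. For (iii), disk bubbles would be represented by elements of $\pi_2(\Sym^n(\Sigma), T_\alpha)$ or $\pi_2(\Sym^n(\Sigma), T_\beta)$ with positive symplectic area; one shows, as in \cite{OS04:HolomorphicDisks}, that any such class has a domain with multiplicity $0$ near $\bdy \Sigma$ and therefore, combined with positivity of intersections, cannot contribute for generic $J$. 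For (iv), boundary degenerations have domains that are linear combinations of the closed components of $\Sigma \setminus \alphas$ or $\Sigma \setminus \betas$; in the balanced sutured setting, each component of $\Sigma$ meets $\bdy \Sigma$, so the only such linear combinations with nonnegative multiplicities touching $\bdy \Sigma$ in multiplicity $0$ are trivial, ruling these out. With (ii)--(iv) excluded, the usual gluing theorem identifies a neighborhood of each broken flow in (i) with a half-open interval, so $\ol{\cM(\x,\z)/\RR}$ is a compact $1$-manifold with boundary exactly the two-story buildings, and $\bdy^2 = 0$ follows.
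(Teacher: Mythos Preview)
Your proposal is correct and follows the same standard moduli-space approach as the paper, which simply names transversality, compactness, and gluing as the three ingredients without spelling out the bubbling analysis you give in (ii)--(iv). One small sharpening: in (iv) the precise hypothesis you need is not that each component of $\Sigma$ meets $\bdy\Sigma$ but that $F_{\pm}$ have no closed components (built into the definition of a sutured Heegaard diagram), which is exactly what forces any nonnegative domain with boundary only on $\alphas$ (or $\betas$) and multiplicity $0$ near $\bdy\Sigma$ to be trivial.
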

This follows from ``standard techniques''.  The differential $\bdy$ is
defined by counting $0$-dimensional moduli spaces of disks. The
coefficient of $\z$ in $\bdy^2(\x)$ is given by $\#\amalg_\y
\cM(\x,\y)\times\cM(\y,\z)$. One shows that if $\z$ occurs in $\bdy^2(\x)$ then $\cM(\x,\z)$ is the
interior of a compact $1$-manifold with boundary $\amalg_\y
\cM(\x,\y)\times\cM(\y,\z)$; it follows that $\amalg_\y
\cM(\x,\y)\times\cM(\y,\z)$ consists of an even number of points. The proof that $\cM(\x,\z)$ has the desired structure boils down to three parts: 
\begin{enumerate}
\item A transversality statement, that for a generic almost complex
  structure, $\cM(\x,\z)$ is a smooth manifold.
\item A compactness statement, that any sequence of disks in
  $\cM(\x,\z)$ converges either to a holomorphic disk or a broken
  holomorphic disk.
\item A gluing statement, that near any broken holomorphic disk
  one can find an honest holomorphic disk (and, in fact, that near a
  broken disk the space of honest disks is a 1-manifold).
\end{enumerate}

\begin{theorem}\cite[Theorem 7.5]{Juhasz06:Sutured}
  Up to isomorphism, $\SFH(\HD)$ depends only on the (isomorphism
  class of the) sutured $3$-manifold $(Y,\Gamma)$ represented by
  $\HD$.
\end{theorem}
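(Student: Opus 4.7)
The plan is to reduce the statement to Theorem~\ref{thm:heegaard-moves} by proving that $\SFH(\HD)$ is unchanged, up to canonical isomorphism, under each of the three Heegaard moves (isotopy, handleslide, stabilization), and also to show independence of the auxiliary choices made in the construction --- namely the compatible almost complex structure $J$ on $\Sym^n(\Sigma)$ and, in the non-RHT case, the isotopy witnessing admissibility. For $J$-independence, given two generic $\omega$-compatible $J_0,J_1$, I would interpolate by a generic path $\{J_t\}_{t\in[0,1]}$ and define a continuation map $\Phi\co \SFC_{J_0}(\HD)\to\SFC_{J_1}(\HD)$ by counting disks whose complex structure varies along a prescribed profile. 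Standard transversality, compactness, and gluing arguments (the same three ingredients that give $\bdy^2=0$) show $\Phi$ is a chain map, and the concatenation with a map in the opposite direction is chain-homotopic to the identity via a count of disks parametrized by a $2$-parameter family of $J_t$'s. Isotopy invariance of $\alphas$ or $\betas$ is essentially the same continuation argument applied to a path of Lagrangians $T_{\alphas_t}$, together with the observation that isotopies which cross an intersection point and create/destroy a canceling pair induce chain-homotopy equivalences.

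The central step is handleslide invariance. Suppose $\betas'$ is obtained from $\betas$ by handlesliding $\beta_1$ over $\beta_2$, and let $\betas''$ be a small Hamiltonian perturbation of $\betas$ consisting of curves parallel to the $\beta_i$ for $i\geq 2$ and to $\beta_1\#\beta_2$ for $i=1$. I would consider the triple diagram $(\Sigma,\alphas,\betas,\betas')$ and, following the Ozsv\'ath--Szab\'o template, define a chain map $F\co \SFC(\alphas,\betas)\to\SFC(\alphas,\betas')$ by counting holomorphic triangles in $\Sym^n(\Sigma)$ with boundaries on $T_\alpha$, $T_\beta$, $T_{\beta'}$, weighted by a distinguished ``top'' generator $\Theta\in\SFC(\betas,\betas')$. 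The distinguished generator is produced by a direct model computation: the diagram $(\Sigma,\betas,\betas')$ is a connect sum of standard genus-one pieces, and its $\SFH$ is the tensor product of the homologies of these pieces, each of which is one-dimensional in top degree. That $F$ is a chain map follows from an associativity relation obtained by counting ends of the $1$-dimensional moduli space of triangles; that the composite $F'\circ F$ with the map back is chain-homotopic to the identity follows from an associativity relation for holomorphic quadrilaterals applied to the quadruple diagram $(\Sigma,\alphas,\betas,\betas',\betas'')$, together with the model calculation that triangle-counting with two top generators yields the identity map on $\SFC(\alphas,\betas'')\cong \SFC(\alphas,\betas)$.

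Stabilization invariance I would prove by showing that if $\HD\# \HD_0$ is the connect sum at a point of $\Sigma\setminus(\alphas\cup\betas)$ with the standard genus-one diagram $\HD_0=(T^2,\alpha_0,\beta_0)$ with $|\alpha_0\cap\beta_0|=1$, then for a sufficiently stretched almost complex structure along the neck, every rigid holomorphic disk in $\Sym^{n+1}(\Sigma\#\Sigma_0)$ decomposes as a disk in $\Sym^n(\Sigma)$ with its unique intersection point on the $T^2$ side; this yields a chain isomorphism $\SFC(\HD\#\HD_0)\cong\SFC(\HD)\otimes\FF_2\cong\SFC(\HD)$. In the non-RHT setting one additionally checks that admissibility can be preserved through each of the moves (a variant of the winding arguments of Ozsv\'ath--Szab\'o), ensuring the counts defining each of the above maps are finite. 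The main obstacle will be the handleslide step: it requires setting up the full $A_\infty$-type associativity for holomorphic polygons in $\Sym^n(\Sigma)$ with careful transversality, and carrying out the model calculation of $\SFH(\betas,\betas')$ with the identification of $\Theta$ --- this is technically the deepest piece, while isotopy/$J$-invariance and the neck-stretching for stabilization are comparatively formal.
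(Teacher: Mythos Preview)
Your proposal is correct and follows essentially the same outline as the paper's (sketched) proof: reduce to Theorem~\ref{thm:heegaard-moves} and establish invariance under change of $J$, isotopy (via moving Lagrangians), handleslide (via holomorphic triangles and a distinguished top generator), and stabilization. One minor simplification you miss: in the sutured setting the paper observes that for stabilization it suffices to stabilize near a boundary component of $\Sigma$, in which case the new region adjacent to $\bdy\Sigma$ is forced to have multiplicity zero in every domain and the two chain complexes are literally isomorphic---no neck-stretching is needed. (Also, your description of $\betas''$ is slightly garbled: it should be a small perturbation of the \emph{original} $\betas$, with each curve parallel to $\beta_i$, not to $\beta_1\#\beta_2$.)
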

The proof, which is similar to the invariance proof
in~\cite{OS04:HolomorphicDisks}, is broken into three parts:
invariance under isotopies and change of almost complex structure;
invariance under handleslides; and invariance under
stabilization (see Theorem~\ref{thm:heegaard-moves}). Stabilization is easy: it suffices to stabilize near a
boundary component, in which case the two complexes are
isomorphic. Isotopy invariance follows from standard techniques in
Floer theory: one considers moduli spaces of disks with boundary on a
family of moving Lagrangians. Handleslide invariance is a little more
complicated---one uses counts of certain holomorphic triangles (rather
than bigons) to define the relevant maps---but fits nicely with
the modern philosophy of Fukaya categories.

\subsubsection{Decomposition according to \texorpdfstring{$\SpinC$}{spin-c} structures}\label{sec:spinc}
Notice in the example of $S^3\setminus (4_1)$ that there were generators
not connected by any topological disk (immersed or otherwise). This
relates to the notion of $\SpinC$-structures.
\begin{definition}
  Fix a sutured manifold $(Y,\Gamma)$.  Call a vector field $v$ on $Y$
  \emph{well-behaved} if:
  \begin{itemize}
  \item $v$ is non-vanishing.
  \item On $R_+$, $v$ points out of $Y$.
  \item On $R_-$, $v$ points into $Y$.
  \item Along $\gamma$, $v$ is tangent to $\bdy Y$ (and points from
    $R_-$ to $R_+$).
  \end{itemize}
\end{definition}
(The term ``well-behaved'' is not standard.)

\begin{definition}\label{def:homologous-vf}\cite[Definition 4.2]{Juhasz06:Sutured}
  Fix $Y$ connected and a ball $\bD^3$ in the interior of $Y$. We say
  well-behaved vector fields $v$ and $w$ on $Y$ are \emph{homologous}
  if $v|_{Y\setminus\bD^3}$ and $w|_{Y\setminus \bD^3}$ are isotopic
  (through well-behaved vector fields). This is (obviously) an
  equivalence relation. Let $\SpinC(Y,\Gamma)$ denote the set of
  homology classes of well-behaved vector fields; we refer to elements of
  $\SpinC(Y,\Gamma)$ as \emph{$\SpinC$-structures} on $Y$. For $Y$
  disconnected we define
  $\SpinC(Y,\Gamma)=\prod_i\SpinC(Y_i,\Gamma_i)$, where the product is
  over the connected components of $Y$.
\end{definition}

Juh\'asz's Definition~\ref{def:homologous-vf} is inspired by Turaev's
work~\cite{Turaev97:spinc} and the analogous construction
in the closed case from~\cite{OS04:HolomorphicDisks}.

\begin{lemma}\label{spinc-torseur}\cite[Remark 4.3]{Juhasz06:Sutured}
  $\SpinC(Y,\Gamma)$ is a torseur for (affine copy of) $H_1(Y)\cong
  H^2(Y,\bdy Y)$.
\end{lemma}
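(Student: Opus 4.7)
The plan is to apply obstruction theory for sections of the unit tangent bundle, together with Poincar\'e-Lefschetz duality.

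First, since $Y$ is an orientable $3$-manifold it is parallelizable, so fix a trivialization $TY \cong Y \times \RR^3$. Under this identification a non-vanishing vector field $v$ becomes a map $\varphi_v \co Y \to S^2$, and the well-behaved boundary conditions force $\varphi_v|_{\bdy Y}$ into a specific homotopy class. After a small isotopy supported near $\bdy Y$, any two well-behaved vector fields can be arranged to literally agree on $\bdy Y$, so I will compare them as elements of the relative homotopy set $[Y,\bdy Y; S^2,\varphi_0]$ for a fixed model boundary map $\varphi_0$.

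Second, I will define the action of $\alpha \in H^2(Y,\bdy Y;\ZZ) \cong H_1(Y)$ on a class $[v] \in \SpinC(Y,\Gamma)$ as follows. Represent the Poincar\'e dual of $\alpha$ by a disjoint union $c \subset Y$ of properly embedded arcs and circles, and modify $\varphi_v$ in a tubular neighborhood $\nbd(c)$ by inserting a map of degree one on each meridional disk, using $\pi_2(S^2)=\ZZ$. Standard obstruction theory shows the result is independent of the choice of representative $c$, of the modification, and of $v$ within its homology class (the last uses that the isotopy in Definition \ref{def:homologous-vf} can be performed away from $c$).

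Third, I verify transitivity and freeness. For transitivity: given well-behaved $v,w$, the primary obstruction to homotoping $\varphi_v$ to $\varphi_w$ rel $\bdy Y$ defines a class $\alpha \in H^2(Y,\bdy Y;\pi_2(S^2)) = H^2(Y,\bdy Y;\ZZ)$, and one checks that $\alpha \cdot [v] = [w]$ up to the secondary obstruction in $H^3(Y,\bdy Y;\pi_3(S^2)) \cong \ZZ$ (the Hopf-invariant ambiguity), which the ball-modification in Definition \ref{def:homologous-vf} kills. For freeness: if $\alpha \cdot [v] = [v]$, then an isotopy exists on $Y \setminus \bD^3$, forcing the primary obstruction to vanish there; by excision $H^2(Y,\bdy Y;\ZZ) \cong H^2(Y \setminus \bD^3,\bdy Y;\ZZ)$, so $\alpha = 0$. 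Finally, Poincar\'e-Lefschetz duality identifies $H^2(Y,\bdy Y;\ZZ) \cong H_1(Y;\ZZ)$.

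The main obstacle is the obstruction-theoretic bookkeeping, especially verifying that the $H^3$ ambiguity on relative homotopy classes of maps $Y \to S^2$ is exactly what ball-modification trivializes. These are classical facts going back to Turaev~\cite{Turaev97:spinc} in the closed case, and the extension to the sutured setting requires only that all isotopies and modifications respect the $R_\pm/\gamma$ decomposition on $\bdy Y$.
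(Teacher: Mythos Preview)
The paper does not actually prove this lemma: it is stated with a citation to Juh\'asz and then assigned as an exercise (Exercise~9 in the list at the end of Section~\ref{sec:HF}). So there is no ``paper's own proof'' to compare against.

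Your argument is the standard one and is essentially correct. Trivializing $TY$ to pass to maps $Y\to S^2$, then using obstruction theory with the primary obstruction in $H^2(Y,\bdy Y;\pi_2(S^2))$ and the secondary one in $H^3(Y,\bdy Y;\pi_3(S^2))$, is exactly how Turaev's argument in the closed case adapts. The key observation---that removing $\bD^3$ kills the $H^3$ obstruction, since $H^3(Y\setminus\bD^3,\bdy Y)\cong H_0(Y\setminus\bD^3,S^2)=0$ by Lefschetz duality---is the heart of why Definition~\ref{def:homologous-vf} is set up the way it is. Your excision claim $H^2(Y,\bdy Y)\cong H^2(Y\setminus\bD^3,\bdy Y)$ is correct (it follows from the long exact sequence of the triple $(Y,Y\setminus\interior(\bD^3),\bdy Y)$, using that $H^k(\bD^3,S^2)$ is $\ZZ$ in degree $3$ and zero otherwise).

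Two places where you could be more careful: first, the well-behaved boundary conditions are open conditions (``points out of $Y$'' rather than ``equals the outward normal''), so you should say a word about why the space of such restrictions to $\bdy Y$ is contractible, justifying the reduction to a fixed $\varphi_0$. Second, obstruction theory for maps into non-simple spaces can be delicate about whether the primary obstruction is well-defined independent of choices; for $S^2$ this is fine, but it would be cleaner to invoke the Pontryagin--Thom correspondence (homotopy classes of maps to $S^2$ correspond to framed cobordism classes of framed $1$-submanifolds), which makes both the $H_1$-action and the Hopf-invariant ambiguity transparent.
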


The first reason $\SpinC$ structures are of interest to us is the following:
\begin{lemma}\cite[Corollary 4.8]{Juhasz06:Sutured}
  There is a map $\spinc\co T_\alpha\cap T_\beta\to \SpinC(Y)$ with
  the property that $\spinc(\x)=\spinc(\y)$ if and only if $\x$ and
  $\y$ can be connected by a bigon (Whitney disk) in
  $(\Sym^g(\Sigma),T_\alpha,T_\beta)$.
\end{lemma}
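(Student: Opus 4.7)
The plan is to associate a $\SpinC$-structure to each generator via a Morse-theoretic construction, then identify the obstruction to two generators being connected by a Whitney disk with the difference of their $\SpinC$-structures in $H_1(Y) \cong H^2(Y, \bdy Y)$.

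First I would realize the Heegaard diagram $\HD$ via a self-indexing Morse function $f\co Y \to [0,3]$ as in the proof of Theorem~\ref{thm:HD-exists}. Each $\alpha_i$ is the ascending sphere of a unique index-$1$ critical point $p_i$, and each $\beta_j$ is the descending sphere of a unique index-$2$ critical point $q_j$. A generator $\x = \{x_1,\dots,x_n\}$ with $x_i \in \alpha_i \cap \beta_{\sigma(i)}$ then determines, for each $i$, a gradient flow line $\gamma_i$ from $p_i$ to $q_{\sigma(i)}$ passing through $x_i$. The permutation $\sigma$ pairs all index-$1$ with all index-$2$ critical points. The gradient-like vector field $-\nabla f$ is well-behaved in the sense of Definition~\ref{def:homologous-vf} everywhere \emph{except} at these critical points. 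Inside a Morse chart near $\gamma_i$, one can excise a small tubular neighborhood of the arc from $p_i$ to $q_{\sigma(i)}$ and fill it with a standard non-vanishing model (the Turaev reparametrization used in~\cite{OS04:HolomorphicDisks}, which exists because the relative Euler class of $-\nabla f$ on this neighborhood vanishes). The result is a well-behaved vector field $v_\x$, and I define $\spinc(\x) \in \SpinC(Y,\Gamma)$ to be its homology class.

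Next I would verify that $\spinc(\x)$ is independent of the local choices. Any two fillings of the standard neighborhoods differ by an isotopy supported there, and any two choices of tubular neighborhood or metric can be connected through one-parameter families; the ambiguity is supported in a ball, so after deleting $\bD^3$ the resulting vector fields are isotopic.

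For the equivalence with Whitney disks I would compute the difference $\spinc(\y) - \spinc(\x) \in H_1(Y)$ in terms of the combinatorics on $\Sigma$, following the closed case in~\cite{OS04:HolomorphicDisks}. Choose $1$-chains $a \subset \alphas$ and $b \subset \betas$ with $\bdy a = \y - \x = \bdy b$; then $a - b$ is a $1$-cycle in $\Sigma \subset Y$, and tracing through the reparametrization construction shows that the class of $a-b$ in $H_1(Y)$ equals $\spinc(\y) - \spinc(\x)$. A Whitney disk $\phi$ from $\x$ to $\y$ has a domain $D(\phi) \in H_2(\Sigma,\alphas\cup\betas)$ whose boundary is exactly such a difference $a-b$, showing $a-b$ bounds in $\Sigma$ and hence in $Y$; this proves the ``only if'' direction. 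Conversely, if $\spinc(\x) = \spinc(\y)$, then $a-b$ is nullhomologous in $Y$, and since the map $H_1(\Sigma) \to H_1(Y)$ has kernel generated by the $\alpha$- and $\beta$-curves (they generate the kernels of inclusion into the two compression bodies), one can modify $a-b$ by adding $\alpha$- and $\beta$-curves until it bounds a $2$-chain $D$ on $\Sigma$; this $D$ is the domain of a topological Whitney disk connecting $\x$ to $\y$ (after possibly also adding copies of $[\Sigma]$).

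The main obstacle will be the bookkeeping in the last step, namely carrying out the reparametrization precisely enough to identify $\spinc(\y) - \spinc(\x)$ with the homology class of $a - b$, and checking that the resulting topological disk can be promoted to an honest Whitney disk in $\Sym^n(\Sigma)$; the latter is standard and follows because $\pi_2(\Sym^n(\Sigma), T_\alpha \cup T_\beta)$ surjects onto the relevant cellular $2$-chains with the correct boundary behavior.
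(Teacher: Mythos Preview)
Your construction of $\spinc$ via the Morse function and gradient-flow reparametrization is exactly what the paper sketches in the paragraph following the lemma, and your outline of the ``if and only if'' part---identifying $\spinc(\y)-\spinc(\x)$ with $[a-b]\in H_1(Y)$ and using that $\ker\bigl(H_1(\Sigma)\to H_1(Y)\bigr)$ is spanned by the $\alpha$- and $\beta$-circles---is the standard argument from~\cite{OS04:HolomorphicDisks} as adapted in~\cite{Juhasz06:Sutured}.

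One small slip: the clause ``after possibly also adding copies of $[\Sigma]$'' is imported from the closed setting, where $[\Sigma]$ is a periodic domain because of the basepoint~$z$. Here $\Sigma$ has boundary, so $[\Sigma]$ is not a relative $2$-cycle with boundary in $\alphas\cup\betas$, and adding it would introduce $\bdy\Sigma$ into $\bdy D$. You do not need this step anyway: once $a'-b'$ is null-homologous in $H_1(\Sigma)$ it bounds a $2$-chain whose boundary automatically lies in $\alphas\cup\betas$, and that is already your domain. Otherwise the proposal is correct.
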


The map $\spinc$ is not hard to construct from the Morse theory
picture. Start with the gradient vector field $\nabla f$. A generator
$\x$ specifies an $n$-tuple $\{\eta_i\}$ of flow lines connecting the
index $1$ and $2$ critical points. The vector field $\nabla
f|_{Y\setminus \nbd\{\eta_i\}}$ extends to a non-vanishing vector
field on all of $Y$ (easy exercise), which in turn specifies the
$\SpinC$-structure $\spinc(\x)$.

\begin{corollary}
  $\SFH(Y,\Gamma)$ decomposes as a direct sum over $\SpinC$ structures
  on $Y$:
  \[
  \SFH(Y,\Gamma)=\bigoplus_{\spinc\in\SpinC(Y,\Gamma)}\SFH(Y,\Gamma,\spinc).
  \]
\end{corollary}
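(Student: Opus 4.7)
The plan is to lift the decomposition to the chain level: define $\SFC(Y,\Gamma,\spinc)$ as the $\Field$-span of those generators $\x \in T_\alpha \cap T_\beta$ with $\spinc(\x) = \spinc$, so that as a vector space
\[
\SFC(Y,\Gamma) = \bigoplus_{\spinc \in \SpinC(Y,\Gamma)} \SFC(Y,\Gamma,\spinc).
\]
It then suffices to show the differential respects this splitting, i.e., that if $\y$ appears in $\bdy \x$ then $\spinc(\x) = \spinc(\y)$; taking homology of each summand separately yields the stated decomposition.

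To verify the differential respects the splitting, recall that the coefficient of $\y$ in $\bdy \x$ counts (rigid) holomorphic disks $u \co \bD^2 \to \Sym^n(\Sigma)$ with $u(-i) = \x$, $u(i) = \y$, and boundary on $T_\alpha \cup T_\beta$. Any such holomorphic disk is in particular a (continuous) Whitney disk from $\x$ to $\y$. By the preceding lemma, the existence of a Whitney disk connecting $\x$ and $\y$ forces $\spinc(\x) = \spinc(\y)$. Hence $\bdy$ sends $\SFC(Y,\Gamma,\spinc)$ into itself, as required.

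There is essentially no obstacle here beyond invoking the preceding lemma; the only point to be a little careful about is the non-RHT case, in which the differential is a sum over homotopy classes $\phi$ of $\#\cM^\phi(\x,\y)/\RR$. In that case the same argument applies class by class: any nonempty $\cM^\phi(\x,\y)$ certifies the existence of a Whitney disk between $\x$ and $\y$, and so again $\spinc(\x) = \spinc(\y)$ whenever $\y$ appears in $\bdy \x$. Taking homology of the $\spinc$-graded chain complex produces the stated direct sum decomposition of $\SFH(Y,\Gamma)$.
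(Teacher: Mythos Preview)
Your argument is correct and is exactly the intended one: the paper states this as an immediate corollary of the preceding lemma and gives no separate proof, and you have simply spelled out the (standard) details—split the chain complex by the value of $\spinc(\x)$, observe that a holomorphic disk is in particular a Whitney disk so the differential preserves each summand, and take homology.
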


In fact, $\SFH(Y,\Gamma)$ has a grading by homotopy classes of
well-behaved vector fields. There is a free $\ZZ$-action on
the set of homotopy classes of well-behaved vector fields, so that the
quotient is the set of $\SpinC$-structures. If $Y$ is RHT, this action
is free, which we can abbreviate as:
\[
0\to\ZZ\to \{\text{well-behaved vector
  fields}\}/\text{isotopy}\to\SpinC(Y,\Gamma)\to 0.
\]
The differential on $\SFC(Y,\Gamma)$ changes the ``$\ZZ$-component''
of this grading by $1$ (and leaves the ``$\SpinC(Y,\Gamma)$
component'' unchanged, of
course). See\cite{GrippHuang:plane-fields,GH:bord-gr} for more
details.

\subsubsection{Definition of $\HFa$ and $\HFKa$}\label{sec:HFa}
A few important special cases predated sutured Floer homology, and so
have their own names:
\begin{itemize}
\item For $Y$ a closed $3$-manifold, $\HFa(Y)\coloneqq \SFH(Y\setminus
  \bD^3,\Gamma)$, where $\Gamma$ consists of a single circle on
  $S^2$. This is one of Ozsv\'ath-Szab\'o's original \emph{Heegaard
    Floer homology} groups, from~\cite{OS04:HolomorphicDisks}.
\item For $K$ a nullhomologous knot in a closed manifold $Y$,
  $\HFKa(Y,K)\coloneqq\SFH(Y\setminus\nbd(K),\Gamma)$, where $\Gamma$
  consists of two meridional sutures. The group $\HFKa(Y,K)$ is (one
  variant of)
  the \emph{knot Floer homology group} of $K$, and was introduced by
  Ozsv\'ath-Szab\'o~\cite{OS04:Knots} and
  Rasmussen~\cite{Rasmussen03:Knots}. In the special case $Y=S^3$,
  $\HFKa(Y,K)$ is often denoted simply by $\HFKa(K)$.

  For $Y=S^3$, $\SpinC(Y\setminus \nbd(K),\Gamma)\cong \ZZ$
  canonically. So, $\HFKa(K)$ decomposes:
  \[
  \HFKa(K)=\bigoplus_{j\in\ZZ} \HFKa(K,j).
  \]
  The integer $j$ is called the \emph{Alexander grading}.

  There is also a $\ZZ$-valued homological grading, the \emph{Maslov
    grading}. Further,
  \[
  \sum_{i,j}(-1)^it^j\dim\HFKa_{i}(K,j)=\Delta_K(t),
  \]
  the Alexander polynomial of $K$. (Here, $i$ denotes the Maslov grading.)
\item For $L$ a link in $Y$ each of whose components is
  nullhomologous, $\HFLa(Y,L)\coloneqq\SFH(Y\setminus\nbd(L),\Gamma)$,
  where $\Gamma$ consists of two meridional sutures on each component
  of $\bdy\nbd(L)$. Again, in the special case $Y=S^3$, one often
  writes simply $\HFLa(L)$. The group $\HFLa(Y,L)$ is one variant of
  the
  \emph{link Floer homology} of $L$, and was introduced
  in~\cite{OS05:HFL}.
\end{itemize}

Some other, less well-studied variants also predated sutured Floer
homology. For example, (one variant of) Eftekhary's \emph{longitude
  Floer homology}~\cite{Eftekhary05:LongitudeWhitehead} corresponds to
the sutured Floer homology of a knot complement with two longitudinal
sutures.

\subsubsection{Product sutured manifolds}
If $(Y,\Gamma)$ is a product sutured manifold then we can take
$\Sigma=R_-=R_+$, with $0$ $\alpha$ and $\beta$ circles. In this
rather degenerate case, $\Sym^0(\Sigma)$ is a single point, and
$T_\alpha$ and $T_\beta$ are each a single point as well, giving
$\SFC(Y,\Gamma)=\Field$ with trivial differential. There is also a
unique $\SpinC$ structure on $(Y,\Gamma)$. Thus:
\begin{lemma}\label{lem:HF-prod-sut-mfld}\cite[Proposition 9.4]{Juhasz06:Sutured}
  For $(Y,\Gamma)$ a product sutured manifold,
  $\SFH(Y,\Gamma)=\Field$, supported in the unique $\SpinC$ structure
  on $(Y,\Gamma)$.
\end{lemma}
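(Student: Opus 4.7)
The plan is to exhibit the simplest possible sutured Heegaard diagram for a product sutured manifold and observe that the resulting Floer complex is trivial for purely formal reasons. Write $(Y,\Gamma)=([0,1]\times R,\,[0,1]\times \bdy R)$. Since $(Y,\Gamma)$ is balanced, $R_+=R$ has no closed components, so neither does $R$ itself. I therefore take $\HD=(\Sigma,\alphas,\betas)=(R,\emptyset,\emptyset)$ with $n=0$. The ``no closed components'' condition in the definition of a sutured Heegaard diagram reduces (since no surgery is performed) to the statement that $\Sigma=R$ has no closed components, which we have just checked. Applying the reconstruction recipe $Y(\HD)=\Sigma\times[0,1]$ with no $2$-handles attached produces precisely $R\times[0,1]$ with $R_\pm=R\times\{0,1\}$ and $\Gamma=(\bdy R)\times\{1/2\}$, that is, $(Y,\Gamma)$.

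Next I carry out the Floer computation in this $n=0$ degenerate case. The symmetric product $\Sym^0(\Sigma)$ is a single point, and $T_\alpha,T_\beta$ are each this single point (the empty product of $\alpha$-factors, respectively $\beta$-factors), so $T_\alpha\cap T_\beta$ consists of exactly one generator. With only one generator, no nonzero differential is possible, so $\SFC(\HD)\cong \Field$ with trivial differential and hence $\SFH(\HD)\cong \Field$. By the invariance Theorem~7.5, $\SFH(Y,\Gamma)\cong \Field$. Alternatively, one may arrive at the same Heegaard diagram by running the Morse-theoretic construction of Theorem~\ref{thm:HD-exists} starting from the manifestly product Morse function $f(t,x)=3t$, which has no critical points.

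For the $\SpinC$-statement, the Morse-theoretic assignment $\spinc\co T_\alpha\cap T_\beta\to \SpinC(Y,\Gamma)$ described after Lemma~4.8 attaches to the unique generator the homology class of the non-vanishing vector field $\nabla f=\partial_t$ (which is manifestly well-behaved, pointing out of $R_+$, into $R_-$, and tangent to $\gamma$). Since there is only one generator, all of $\SFH(Y,\Gamma)$ lies in this single $\SpinC$-class, as claimed. The only delicate point in the whole argument is simply to be comfortable with the $n=0$ case of the definitions — there is no analytic input from holomorphic disks required, and no transversality or compactness issues to address, since the complex is one-dimensional over $\Field$.
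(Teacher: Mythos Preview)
Your proof is correct and follows essentially the same approach as the paper: take the degenerate Heegaard diagram $\HD=(R,\emptyset,\emptyset)$ with $n=0$, observe that $\Sym^0(\Sigma)$ is a single point so $\SFC\cong\Field$ with trivial differential, and note the $\SpinC$-support. The paper's discussion (in the paragraph preceding the lemma) is nearly word-for-word the same, and it also includes your parenthetical suggestion to stabilize once if one is uncomfortable with $\Sym^0$.
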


(If you are uncomfortable with $\Sym^0$, stabilize the diagram
once. The computation remains trivial.)

\subsubsection{Product decompositions, disjoint unions, boundary sums and excess \texorpdfstring{$S^2$}{spherical} boundary components}\label{sec:easy-decomps}
In the next lecture we will discuss how sutured Floer homology behaves under surface decompositions; this behavior is key to its utility. As a simple special case, however, consider a product decomposition $(Y,\Gamma)\stackrel{D}{\rightsquigarrow}(Y',\Gamma')$. One can find a Heegaard diagram $(\Sigma,\alphas,\betas)$ for $(Y,\Gamma)$ with the following properties:
\begin{enumerate}
\item $D\cap\Sigma$ consists of a single arc $\delta$ such that
\item $\delta$ is disjoint from $\alphas$ and $\betas$.
\end{enumerate}
(See~\cite[Lemma 9.13]{Juhasz06:Sutured}.) Cutting $\Sigma$ along
$\delta$ gives a sutured Heegaard diagram $(\Sigma',\alphas,\betas)$
for $(Y',\Gamma')$ (Exercise~\ref{exercise:product-decomp}). With respect to these sutured Heegaard diagrams,
there is an obvious correspondence between generators of
$\SFC(Y,\Gamma)$ and $\SFC(Y',\Gamma')$. Moreover, since the domain of
any holomorphic curve has multiplicity $0$ near $\bdy \Sigma$, it
follows that this identification intertwines the differentials on
$\SFC(\HD)$ and $\SFC(\HD')$. (A little argument, using positivity of
intersections, is needed here.) Thus:
\begin{proposition}\label{prop:product-decomp}\cite[Lemma 9.13]{Juhasz06:Sutured}
  If $(Y,\Gamma)$ and $(Y',\Gamma')$ are related by a product
  decomposition then $\SFH(Y,\Gamma)\cong \SFH(Y',\Gamma')$.
\end{proposition}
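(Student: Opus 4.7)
The plan is to exhibit a sutured Heegaard diagram for $(Y,\Gamma)$ that is adapted to the product disk $D$, to cut along $D$ in order to produce a diagram for $(Y',\Gamma')$, and then to show that this cutting induces an identification of chain complexes. First I would invoke the existence result cited in the statement (\cite[Lemma 9.13]{Juhasz06:Sutured}) to choose a sutured Heegaard diagram $\HD=(\Sigma,\alphas,\betas)$ for $(Y,\Gamma)$ together with a properly embedded arc $\delta\subset\Sigma$ such that $D\cap\Sigma=\delta$ and $\delta$ is disjoint from all the $\alpha$- and $\beta$-circles. One should check, using the Morse-theoretic construction from the proof of Theorem~\ref{thm:HD-exists}, that such a diagram exists: pick a Morse function on $Y$ so that $D$ is a union of flow lines from $R_-$ to $R_+$ and meets $\Sigma=f^{-1}(3/2)$ in a single arc.

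Next I would form the cut diagram $\HD'=(\Sigma',\alphas,\betas)$, where $\Sigma'$ is obtained from $\Sigma$ by cutting along $\delta$ (so two new boundary arcs appear, which join up with the traces of $D$ on $R_\pm$ to form the new sutures $\Gamma'$). A short topological verification, using that $\delta$ is disjoint from $\alphas\cup\betas$, shows that attaching $2$-handles along $\alphas$ and $\betas$ in $\Sigma'\times[0,1]$ recovers exactly $(Y',\Gamma')=(Y\setminus\nbd(D),\Gamma')$; this is what Exercise~\ref{exercise:product-decomp} asks for.

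Since cutting along $\delta$ does not alter any $\alpha_i$, $\beta_j$ or their intersection pattern, there is a tautological bijection on generators $T_\alpha\cap T_\beta$ for $\HD$ and $\HD'$, inducing an $\Field$-linear isomorphism $\SFC(\HD)\cong\SFC(\HD')$ on the level of vector spaces, and this bijection is compatible with the $\SpinC$ decomposition. The core step is to match differentials. Any holomorphic disk $u$ contributing to $\bdy$ has a domain $D(u)\in H_2(\Sigma,\alphas\cup\betas)$ whose multiplicity at $\bdy\Sigma$ is zero; by positivity of intersections the local multiplicities of $D(u)$ are non-negative, and combined with the fact that $\delta$ connects two components of $\bdy\Sigma$ where the multiplicity vanishes, one concludes that $D(u)$ has multiplicity zero in a neighborhood of $\delta$. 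Therefore every such $D(u)$ descends to a domain in $\Sigma'$, and conversely every domain on $\Sigma'$ lifts. A standard almost-complex-structure-stretching argument near $\delta$ then promotes this equality of domains to a bijection of the relevant moduli spaces, showing that the bijection on generators intertwines the two differentials.

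The step I expect to be the main obstacle is the honest verification that the counts of holomorphic disks, not merely their domains, are preserved. In principle one must rule out the possibility that cutting along $\delta$ destroys or creates holomorphic representatives, for instance by using a neck-stretching or degeneration argument along $\delta$; since the multiplicities of $D(u)$ vanish on a neighborhood of $\delta$, no holomorphic curve actually sees $\delta$, but one needs to write this cleanly in the cylindrical or $\Sym^n$ framework. Once this bijection of moduli spaces is established, the isomorphism $\SFH(Y,\Gamma)\cong \SFH(Y',\Gamma')$ follows, and the $\SpinC$ refinement is automatic from the construction.
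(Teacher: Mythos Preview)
Your proposal is correct and follows essentially the same approach as the paper: choose a diagram in which $D\cap\Sigma$ is an arc $\delta$ disjoint from $\alphas\cup\betas$, cut along $\delta$, and observe that generators and differentials agree because domains have multiplicity zero near $\bdy\Sigma$ (hence on the entire region of $\Sigma\setminus(\alphas\cup\betas)$ containing $\delta$). Your worry about neck-stretching is unnecessary: since every contributing domain vanishes on the region containing $\delta$, the holomorphic curves literally have image in $\Sigma\setminus\nbd(\delta)\hookrightarrow\Sigma'$, so the moduli spaces are identical---no degeneration argument is required. (Also, $\delta$ need not connect distinct components of $\bdy\Sigma$; all you use is that its endpoints lie on $\bdy\Sigma$.)
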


In a slightly different direction, suppose we have sutured manifolds
$(Y_1,\Gamma_1)$ and $(Y_2,\Gamma_2)$. The disjoint union $(Y_1\amalg
Y_2,\Gamma_1\amalg\Gamma_2)$ is again a sutured manifold. Moreover,
if $\HD_i$ is a Heegaard diagram for $(Y_i,\Gamma_i)$ then
$\HD_1\amalg \HD_2$ is a Heegaard diagram for $(Y_1\amalg
Y_2,\Gamma_1\amalg \Gamma_2)$. The symmetric product
$\Sym^{g_1+g_2}(\Sigma_1\amalg\Sigma_2)$ decomposes as
$\amalg_{i+j=g_1+g_2}\Sym^{i}(\Sigma_1)\times\Sym^{j}(\Sigma_2)$, but
the Heegaard tori lie in the component $\Sym^{g_1}(\Sigma_1)\times
\Sym^{g_2}(\Sigma_2)$. So (choosing an appropriate almost complex
structure), we get an isomorphism of chain complexes
\[
\SFC(\HD_1\amalg \HD_2)\cong \SFC(\HD_1)\otimes \SFC(\HD_2).
\]
Thus:
\begin{proposition}
  $\SFH(Y_1\amalg Y_2,\Gamma_1\amalg \Gamma_2)\cong
  \SFH(Y_1,\Gamma_1)\otimes \SFH(Y_2,\Gamma_2)$.
\end{proposition}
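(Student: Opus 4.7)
The proof follows almost verbatim the outline already sketched before the statement, so my plan is simply to make that sketch precise and check the one thing that needs genuine verification: that holomorphic disks in the disjoint-union diagram split as products.

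First I would pick sutured Heegaard diagrams $\HD_i = (\Sigma_i, \alphas_i, \betas_i)$ for $(Y_i, \Gamma_i)$ with $g_i$ alpha (and beta) circles, as provided by Theorem~\ref{thm:HD-exists}. The disjoint union $\HD_1 \amalg \HD_2 = (\Sigma_1 \amalg \Sigma_2, \alphas_1 \cup \alphas_2, \betas_1 \cup \betas_2)$ is manifestly a sutured Heegaard diagram for $(Y_1 \amalg Y_2, \Gamma_1 \amalg \Gamma_2)$ (the Morse-theoretic handle description is just the disjoint union of the handle descriptions), and it is balanced since each $\HD_i$ is. Set $g = g_1 + g_2$.

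Next I would analyze the symmetric product. There is a decomposition
\[
\Sym^{g}(\Sigma_1 \amalg \Sigma_2) = \coprod_{i+j=g} \Sym^{i}(\Sigma_1) \times \Sym^{j}(\Sigma_2),
\]
and the Heegaard tori lie entirely in the single component $\Sym^{g_1}(\Sigma_1) \times \Sym^{g_2}(\Sigma_2)$, since any intersection point must use exactly $g_1$ of the circles in $\Sigma_1$ and $g_2$ in $\Sigma_2$. Consequently, the generators satisfy $T_\alpha \cap T_\beta = (T_{\alpha_1} \cap T_{\beta_1}) \times (T_{\alpha_2} \cap T_{\beta_2})$, giving an obvious identification of $\Field$-vector spaces
\[
\SFC(\HD_1 \amalg \HD_2) \cong \SFC(\HD_1) \otimes_{\Field} \SFC(\HD_2).
\]

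The one real point is to arrange that this identification intertwines differentials. For this I would use a split almost complex structure on $\Sym^{g_1}(\Sigma_1) \times \Sym^{g_2}(\Sigma_2)$ obtained from individually chosen complex structures $J_i$ on $\Sym^{g_i}(\Sigma_i)$ (generic enough to achieve transversality for both factors; such $J_i$ exist by the standard transversality arguments sketched in Section~\ref{sec:differential}). With such a product almost complex structure, the two projections are holomorphic, so any holomorphic disk $u \co \bD^2 \to \Sym^{g_1}(\Sigma_1) \times \Sym^{g_2}(\Sigma_2)$ splits as a pair $(u_1, u_2)$ of holomorphic disks, one into each factor. Conversely, any such pair yields a disk in the product. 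The boundary and corner conditions of Definition~\ref{def:disk} split factorwise, and the $\RR$-action is diagonal, so the rigid (index-$1$, modulo translation) disks connecting $\x = (\x_1, \x_2)$ to $\y = (\y_1, \y_2)$ correspond exactly to pairs consisting of a rigid disk in one factor together with a constant disk at a generator in the other factor. This yields the Leibniz rule $\bdy(\x_1 \otimes \x_2) = \bdy\x_1 \otimes \x_2 + \x_1 \otimes \bdy\x_2$.

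The main (minor) obstacle is the transversality/genericity choice: the product almost complex structure need not be generic on $\Sym^{g}(\Sigma_1 \amalg \Sigma_2)$ in general. However, since the Heegaard tori live in the single component $\Sym^{g_1}(\Sigma_1) \times \Sym^{g_2}(\Sigma_2)$, all relevant moduli spaces are cut out by the product structure, and genericity of each $J_i$ separately suffices. Taking homology then gives the claimed K\"unneth isomorphism, and it visibly respects the $\SpinC$ decomposition (Section~\ref{sec:spinc}), since $\SpinC(Y_1 \amalg Y_2, \Gamma_1 \amalg \Gamma_2) = \SpinC(Y_1, \Gamma_1) \times \SpinC(Y_2, \Gamma_2)$ by Definition~\ref{def:homologous-vf}.
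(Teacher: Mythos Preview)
Your proposal is correct and follows essentially the same approach as the paper: the paper's argument is precisely the sketch preceding the proposition (disjoint-union Heegaard diagram, decomposition of the symmetric product, Heegaard tori lying in the single component $\Sym^{g_1}(\Sigma_1)\times\Sym^{g_2}(\Sigma_2)$, and a product almost complex structure giving a chain-level K\"unneth isomorphism). You have simply fleshed out the details---the splitting of holomorphic disks, the Leibniz rule via index/rigidity considerations, and the transversality remark---none of which the paper makes explicit but all of which are the intended content of the parenthetical ``choosing an appropriate almost complex structure.''
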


\begin{figure}
  \centering
  \includegraphics{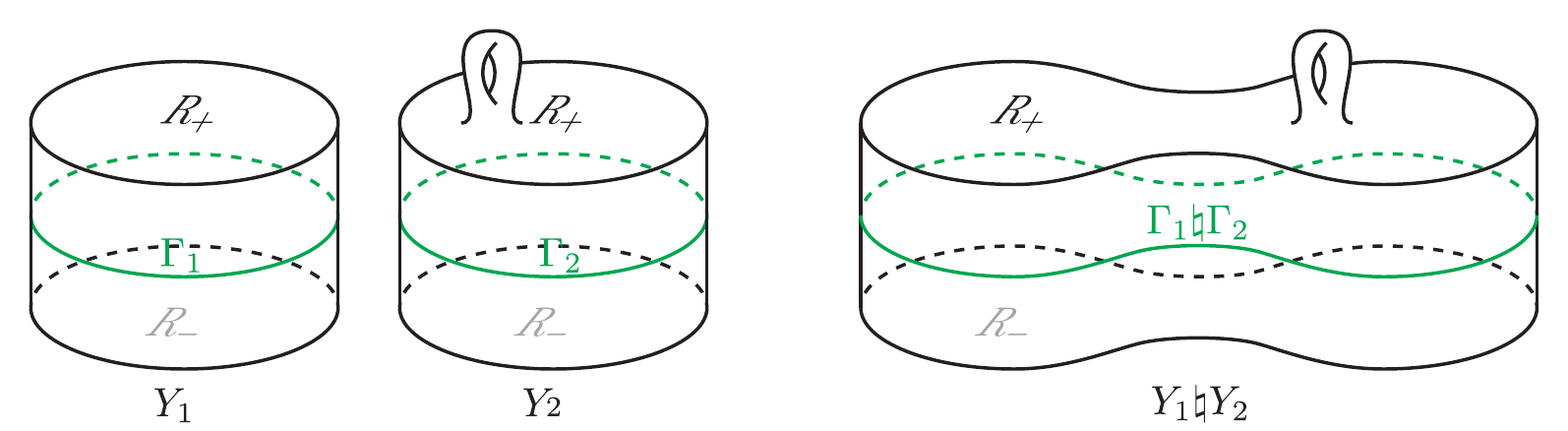}
  \caption{\textbf{The sutured manifold structure on the boundary sum.} The points $q_i$ are marked.}
  \label{fig:sutured-bdy-sum}
\end{figure}

Next, suppose that $(Y_1,\Gamma_1)$ and $(Y_2,\Gamma_2)$ are sutured
manifolds and $\HD_1$ and $\HD_2$ are associated Heegaard
diagrams. Fix a point $p_i\in\bdy\Sigma_i$, corresponding to a point
$q_i\in \Gamma_i\subset \bdy Y_i$. Then we can form the boundary sum
$\HD_1\natural\HD_2$ of $\HD_1$ and $\HD_2$ at the points $p_1$ and
$p_2$. The diagram $\HD_1\natural\HD_2$ represents the boundary sum of
$Y_1$ and $Y_2$, which inherits a sutured manifold structure; see
Figure~\ref{fig:sutured-bdy-sum}. The manifold $Y_1\natural Y_2$
differs from the disjoint union $Y_1\amalg Y_2$ by a product
decomposition, so:
\begin{corollary}
  $\SFH(Y_1\natural Y_2,\Gamma)\cong \SFH(Y_1,\Gamma_1)\otimes
  \SFH(Y_2,\Gamma_2)$.
\end{corollary}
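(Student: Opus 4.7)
The plan is to deduce this corollary immediately by combining the preceding proposition (which handles disjoint unions) with Proposition~\ref{prop:product-decomp} (which states that product decompositions preserve $\SFH$). The strategy is to exhibit an explicit product disk in $(Y_1\natural Y_2,\Gamma)$ whose decomposition recovers the disjoint union $(Y_1\amalg Y_2,\Gamma_1\amalg\Gamma_2)$, and then quote the two earlier results.

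First, I would identify the relevant product disk. Recall that the boundary sum is formed by gluing in a 3-dimensional 1-handle $h\cong[0,1]\times \bD^2$ connecting small disk neighborhoods of $q_1\in\Gamma_1$ and $q_2\in\Gamma_2$. Let $D$ be the cocore disk $\{1/2\}\times\bD^2$ of this 1-handle. Because the attaching points $q_i$ were chosen to lie on the sutures $\Gamma_i$, the intersection $\bdy D\cap\Gamma$ consists of exactly the two points where the cocore crosses the arc through which the sutures from $\Gamma_1$ and $\Gamma_2$ were joined (see Figure~\ref{fig:sutured-bdy-sum}). Thus $D\cong\bD^2$ meets $\Gamma$ transversely in two points and so is a product disk in the sense of Definition~\ref{def:decompose}.

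Next, I would perform the sutured manifold decomposition along $D$. Cutting along the cocore disk separates the 1-handle, and hence separates $Y_1\natural Y_2$ back into $Y_1$ and $Y_2$. Because $D$ meets the sutures in the two points coming from the identified arcs of $\Gamma_1$ and $\Gamma_2$, the push-offs $D_\pm$ contribute exactly the small sub-disks of $R_\pm$ that had been removed on either side when the boundary sum was formed. Consequently the resulting sutured manifold is precisely $(Y_1\amalg Y_2,\Gamma_1\amalg \Gamma_2)$, i.e.,
\[
(Y_1\natural Y_2,\Gamma)\stackrel{D}{\rightsquigarrow}(Y_1\amalg Y_2,\Gamma_1\amalg\Gamma_2).
\]

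Applying Proposition~\ref{prop:product-decomp} to this decomposition gives $\SFH(Y_1\natural Y_2,\Gamma)\cong \SFH(Y_1\amalg Y_2,\Gamma_1\amalg\Gamma_2)$, and then the preceding disjoint-union proposition yields $\SFH(Y_1\amalg Y_2,\Gamma_1\amalg\Gamma_2)\cong \SFH(Y_1,\Gamma_1)\otimes \SFH(Y_2,\Gamma_2)$, completing the proof. The only point requiring any care is verifying that $D$ is genuinely a product disk, i.e.\ that $\bdy D\cap\Gamma$ really consists of two points (not zero and not four); this is transparent from the picture but is the one place where the hypothesis $q_i\in\Gamma_i$ (rather than elsewhere on $\bdy Y_i$) is used, so I would make it explicit. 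Otherwise the argument is entirely formal.
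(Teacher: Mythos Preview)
Your argument is correct and is exactly the approach the paper takes: the paper simply notes that $Y_1\natural Y_2$ differs from $Y_1\amalg Y_2$ by a product decomposition and then invokes Proposition~\ref{prop:product-decomp} together with the disjoint-union proposition. You have filled in the details (identifying the cocore of the $1$-handle as the product disk) that the paper leaves implicit.
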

(Of course, this is also easy to prove directly.)

Now, consider the special case of $(Y_1=[0,1]\times S^2,\Gamma_1)$,
where $\Gamma_1$ consists of a single suture on each boundary
component. A Heegaard diagram for $(Y_1,\Gamma_1)$ is shown in
Figure~\ref{fig:Y1}. Here, $\SFC(Y_1,\Gamma_1)$ has two generators,
$x$ and $y$, and there are two disks from $x$ to $y$. (This
computation is easy, since we are in the first symmetric product.)
Consequently, $\bdy(x)=2y=0$, so $\SFH(Y_1,\Gamma_1)=(\Field)^2$.

\begin{figure}
  \centering
  \includegraphics{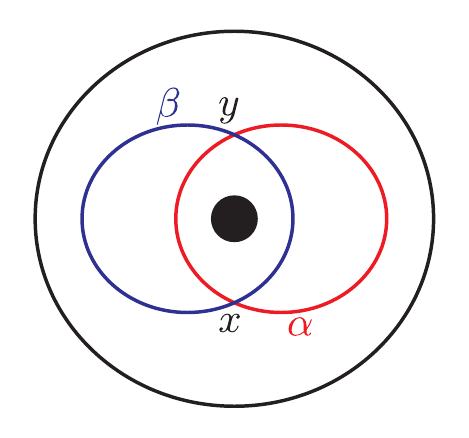}
  \caption{\textbf{A Heegaard diagram for $(Y_1,\Gamma_1)$.} The
    surface $\Sigma$ is an annulus, and there is a single $\alpha$
    circle and a single $\beta$ circle running around the hole.}
  \label{fig:Y1}
\end{figure}

Notice that taking the boundary sum with $(Y_1,\Gamma_1)$ has the effect of introducing a new $S^2$ boundary component, with a single suture. So, we have:
\begin{corollary}\cite[Corollary 9.16]{Juhasz06:Sutured}
  Let $(Y,\Gamma)$ be a sutured manifold and let $(Y',\Gamma')$ be the
  result of deleting a $\bD^3$ from the interior of $Y$ and placing a
  single suture on the resulting boundary component. Then
  $\SFH(Y',\Gamma')\cong \SFH(Y,\Gamma)\otimes(\Field)^2$.
\end{corollary}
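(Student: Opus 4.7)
The plan is to deduce this corollary immediately from the boundary-sum formula and the explicit computation of $\SFH(Y_1,\Gamma_1)$ that precedes it in the excerpt. First I would identify $(Y',\Gamma')$ with the sutured boundary sum $(Y,\Gamma)\glue(Y_1,\Gamma_1)$, where $(Y_1,\Gamma_1)=([0,1]\times S^2,\Gamma_1)$ carries a single suture on each $S^2$ boundary component. Topologically, gluing a small $\bD^3\subset\bdy Y$ to a small $\bD^3\subset\{0\}\times S^2\subset\bdy Y_1$ amounts to drilling a tube from $\bdy Y$ with far end $\{1\}\times S^2$; this tube is isotopic (rel the rest of $\bdy Y$) to simply deleting an open $\bD^3$ from the interior of $Y$, with $\{1\}\times S^2$ becoming the new $S^2$ boundary component. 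I would perform the boundary sum at a point lying on a suture of $\bdy Y$ and on the (unique) suture of $\{0\}\times S^2$, so that those two sutures fuse into a single suture on the merged boundary (leaving the sutured structure of the original $\bdy Y$ unchanged up to isotopy), while the suture on $\{1\}\times S^2$ remains as the single new suture on the new $S^2$ component. This exhibits $(Y',\Gamma')\cong(Y,\Gamma)\glue(Y_1,\Gamma_1)$ as sutured manifolds.

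Next I would apply the boundary-sum corollary proved just above in the excerpt to obtain
\[
\SFH(Y',\Gamma')\cong \SFH(Y,\Gamma)\otimes\SFH(Y_1,\Gamma_1),
\]
and substitute the computation $\SFH(Y_1,\Gamma_1)\cong(\Field)^2$ established via the annular Heegaard diagram of Figure~\ref{fig:Y1} (two generators $x,y$ connected by two bigons, so the mod $2$ differential vanishes). This yields $\SFH(Y',\Gamma')\cong\SFH(Y,\Gamma)\otimes(\Field)^2$, as required.

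The only real content beyond the preceding results is the topological identification $(Y',\Gamma')\cong(Y,\Gamma)\glue(Y_1,\Gamma_1)$, and this is the step I expect to be the main (though still modest) obstacle: one must verify that the boundary-sum construction, when performed along sutures as described, reproduces exactly the sutured structure obtained by drilling an interior ball and placing one circle suture on the resulting $S^2$. This is a local check at the connect-sum point and is governed by the same local picture used to justify the general boundary-sum formula (cf.\ Figure~\ref{fig:sutured-bdy-sum}); there is no holomorphic-curve input at all in this corollary.
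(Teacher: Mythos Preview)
Your proposal is correct and follows exactly the argument the paper gives: the paper observes that taking the boundary sum with $(Y_1,\Gamma_1)=([0,1]\times S^2,\Gamma_1)$ has the effect of introducing a new $S^2$ boundary component with a single suture, and then invokes the boundary-sum formula together with the computation $\SFH(Y_1,\Gamma_1)\cong(\Field)^2$. Your write-up spells out the topological identification a bit more carefully than the paper does, but the content is identical.
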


\begin{remark}
  At first glance, one might expect $\SFH(Y_1,\Gamma_1)$ to vanish, as
  one can find a Heegaard diagram in which $\alpha$ and $\beta$ are
  disjoint. Note that $(Y_1,\Gamma_1)$ is not RHT, so one is in the
  more complicated situation described at the end of
  Section~\ref{sec:differential}. The need to work with an admissible
  Heegaard diagram is the reason $\SFH(Y_1,\Gamma_1)\neq 0$; but see
  also Exercise~\ref{exercise:vanish}.
\end{remark}

\subsection{Excess meridional sutures}
\begin{proposition}\label{prop:extra-merid}
  If $(Y',\Gamma')$ is obtained from $(Y,\Gamma)$ by replacing a
  suture on a toroidal boundary component with three parallel sutures
  then $\SFH(Y',\Gamma')\cong \SFH(Y,\Gamma)\otimes(\Field)^2$.
\end{proposition}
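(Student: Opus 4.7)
The plan is to construct a sutured Heegaard diagram $\HD'$ for $(Y',\Gamma')$ from a diagram $\HD=(\Sigma,\alphas,\betas)$ for $(Y,\Gamma)$ by a local modification concentrated near the toroidal boundary component $T$ in question, and to show that this modification contributes a rank-$2$ tensor factor to the chain complex, in direct analogy with the ``extra $S^2$ boundary'' argument of Section~\ref{sec:easy-decomps}.

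First, using the proof of Theorem~\ref{thm:HD-exists}, I would arrange the Morse function $f$ on $Y$ so that the suture being replaced corresponds to a single boundary component $c$ of $\Sigma$, with a collar neighborhood of $c$ disjoint from $\alphas\cup\betas$. I would then modify $f$ in a collar of $T$ by introducing a cancelling pair of index-$1$ and index-$2$ critical points, with stable/unstable disks localized near where the two new sutures are to appear. The resulting diagram is $\HD'=(\Sigma',\alphas\cup\{\alpha_0\},\betas\cup\{\beta_0\})$, where $\Sigma'=\Sigma\cup P$ with $P$ a pair-of-pants attached to $\Sigma$ along $c$ and carrying two new boundary circles (corresponding to the two new parallel sutures), and $\alpha_0,\beta_0\subset P$ are parallel simple closed curves meeting transversely in exactly two points, each separating $c$ from the two new boundary circles of $\Sigma'$.

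Since $\alpha_0$ and $\beta_0$ are disjoint from $\alphas\cup\betas$, the intersection points in $T_{\alpha'}\cap T_{\beta'}\subset\Sym^{g+1}(\Sigma')$ are in bijection with pairs $(\x,y)$ for $\x\in T_\alpha\cap T_\beta$ and $y\in\alpha_0\cap\beta_0$, giving $\SFC(\HD')\cong \SFC(\HD)\otimes V$ as $\Field$-vector spaces, where $V=\Field\langle y_1,y_2\rangle$. By positivity of intersections and the vanishing of domain multiplicities along $\bdy\Sigma'$, the domain of any rigid holomorphic disk in $\Sym^{g+1}(\Sigma')$ must be supported either entirely in $\Sigma$ or entirely in $P$, so the differential splits as $\bdy_{\SFC(\HD)}\otimes\Id_V+\Id\otimes\bdy_V$. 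It remains to identify $(V,\bdy_V)$: the local piece is modeled exactly on the non-RHT computation for $(Y_1,\Gamma_1)$ in Figure~\ref{fig:Y1}, and after imposing admissibility by winding transverse to $\alpha_0,\beta_0$ in $P$, the two visible bigons contribute cancelling counts. Hence $\bdy_V=0$ and $H(V)\cong(\Field)^2$, yielding $\SFH(Y',\Gamma')\cong\SFH(Y,\Gamma)\otimes(\Field)^2$.

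The main obstacle is justifying the geometric claim in the second paragraph: that the Morse-theoretic modification (equivalently, the attachment of $(P,\alpha_0,\beta_0)$) genuinely implements the passage from one suture to three parallel sutures on $T$, and not some other change in sutured structure such as introducing a toroidal component of $\gamma$ or an extra $S^2$ boundary. This amounts to tracking how $R_\pm$ and the gradient flow of $f$ behave in a collar of $T$ as the cancelling pair is inserted, and it is where the hypothesis that the new sutures are parallel to the old one enters essentially. A secondary subtlety is the admissibility of $\HD'$: because the local piece is not RHT, finiteness of $\bdy_V$ requires winding, just as in the remark following the $(Y_1,\Gamma_1)$ computation.
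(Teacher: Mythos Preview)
The paper does not actually give a proof of Proposition~\ref{prop:extra-merid}; it is simply stated and then used to deduce the grid-diagram corollary. Your overall strategy---modify a Heegaard diagram for $(Y,\Gamma)$ locally near the boundary circle $c$ corresponding to the suture being replaced, and show that the local piece contributes a tensor factor of $(\Field)^2$ with trivial differential---is the natural one, and is in the same spirit as the argument for extra $S^2$ boundary components in Section~\ref{sec:easy-decomps}.

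There is, however, a concrete error in your construction. Replacing one suture by three parallel sutures increases the number of sutures by \emph{two}, so $\Sigma'$ must have two more boundary components than $\Sigma$. Gluing a pair of pants $P$ to $\Sigma$ along $c$ removes the boundary circle $c$ and adds the two remaining boundary circles of $P$, for a net gain of only \emph{one}. Thus your $\HD'$ has the wrong number of sutures and does not represent $(Y',\Gamma')$. The fix is to take $P$ to be a four-holed sphere glued along $c$ (equivalently, puncture $\Sigma$ twice in a collar of $c$, keeping $c$ as a boundary circle). Then $\alpha_0$ and $\beta_0$ should be essential separating curves on this four-holed sphere giving \emph{different} partitions of the four boundary circles into pairs---chosen so that surgering on $\alpha_0$ (respectively $\beta_0$) splits off the new annular component of $R_-'$ (respectively $R_+'$). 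Such curves are not isotopic to one another (so your description of them as ``parallel'' is also off) and meet in two points. Three of the four complementary regions in $P$ contain a boundary circle of $\Sigma'$ and hence support no holomorphic bigon; the fourth opens into the rest of $\Sigma$ and is not a bigon at all. So $\bdy_V=0$ and the rest of your argument goes through.
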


\begin{corollary}\cite[Proposition 2.5]{MOS06:CombinatorialDescrip}
  If $\HD$ is a grid diagram for a knot $K$ with $n+1$ $\alpha$-circles then 
  \[
  \SFH(\HD)\cong \HFKa(S^3,K)\otimes(\Field)^{2n}.
  \]
\end{corollary}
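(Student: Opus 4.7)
The plan is to deduce this corollary directly from Proposition~\ref{prop:extra-merid} by an induction on $n$, since both sides of the claimed isomorphism are associated to the same knot exterior but with different numbers of meridional sutures.

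First I would unpack what $\HD$ represents. An $(n+1) \times (n+1)$ toroidal grid diagram is, by Example~\ref{eg:grid-diags}, a sutured Heegaard diagram for $S^3 \setminus \nbd(K)$ whose torus boundary carries $2(n+1)$ parallel meridional sutures (one pair for each of the $n+1$ horizontal annuli with two punctures). On the other hand, by the definition in Section~\ref{sec:HFa}, $\HFKa(S^3,K) = \SFH(S^3\setminus\nbd(K),\Gamma_0)$ where $\Gamma_0$ consists of just $2$ meridional sutures. So the corollary reduces to comparing $\SFH$ of the same underlying $3$-manifold equipped with $2(n+1)$ versus $2$ meridional sutures on its toroidal boundary.

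Next I would perform the induction. Each application of Proposition~\ref{prop:extra-merid} takes a toroidal boundary component with $2k$ parallel meridional sutures, replaces one of them with three parallel copies, and thereby produces $2k+2$ parallel meridional sutures while tensoring $\SFH$ with $(\Field)^2$. Starting from $(S^3 \setminus \nbd(K), \Gamma_0)$ with $2$ sutures and applying this move $n$ times in succession yields a sutured manifold with $2+2n = 2(n+1)$ parallel meridional sutures, and
\[
\SFH\bigl(S^3\setminus\nbd(K),\Gamma_{n}\bigr) \cong \HFKa(S^3,K) \otimes (\Field)^2 \otimes \cdots \otimes (\Field)^2 \cong \HFKa(S^3,K)\otimes (\Field)^{2n}.
\]
Finally I would identify this sutured manifold with the one represented by $\HD$. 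The sutured manifold structure depends (up to homeomorphism) only on the topological type of $Y$ together with the isotopy class of $\Gamma$ on $\bdy Y$; any two collections of $2(n+1)$ parallel meridional sutures on the torus boundary are ambient isotopic, so by the invariance statement for sutured Floer homology, $\SFH(\HD)$ is isomorphic to the iterated construction above.

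The only real content in this argument lives in Proposition~\ref{prop:extra-merid}; everything else is bookkeeping about how many meridional sutures a grid diagram carries, and invariance of $\SFH$ under homeomorphism. So the main ``obstacle'' is simply being careful with the count $2+2n = 2(n+1)$ and noting that all meridional suture configurations with a fixed (even) number of components on the boundary torus are equivalent, so that the inductive construction genuinely produces the sutured manifold represented by $\HD$.
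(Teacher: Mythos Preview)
Your proposal is correct and matches the paper's approach exactly: the paper states the corollary immediately after Proposition~\ref{prop:extra-merid} with no separate proof, treating it as an immediate consequence by iterated application. Your unpacking of the suture count ($2(n+1)$ meridional sutures for an $(n+1)\times(n+1)$ grid versus $2$ for $\HFKa$) and the $n$-fold application of the proposition is precisely the intended argument.
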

(For a graded version, $(\Field)^{2n}$ is the exterior algebra on a
bigraded vector space $V$ where one generator is in grading $(0,0)$
and the other is in grading $(-1,-1)$.)


\subsection{Suggested exercises}
\begin{enumerate}
\item Convince yourself that Figure~\ref{fig:fig-8-comp} does, in
  fact, represent the complement of the figure-eight knot, with two
  meridional sutures.
\item Convince yourself that Figure~\ref{fig:big-D3} represents
  $\bD^3$ with one suture on the boundary.
\item Generalize Example~\ref{eg:open-book} to the case of fibered
  links. What if we want a diagram for $Y\setminus \bD^3$, where $Y$
  is the 3-manifold in which the knot (or link) $K$ lies, rather than
  $Y\setminus \nbd(K)$?
\item \label{exercise:annulus}Prove Lemma~\ref{lem:annulus-involution}.
\item Show that, given a link $L$ in $S^3$, there is a toroidal grid
  diagram representing $S^3\setminus\nbd(L)$, with some number of
  meridional sutures on each component of $L$. Explicitly find
  toroidal grid diagrams for $(p,q)$ torus knots.
\item Use grid diagrams to compute $\SFH$ for the complement of the
  unknot with $4$ meridional sutures and $6$ meridional sutures, and
  the complement of the Hopf link with $4$ meridional sutures on each
  component.
\item Compute $\SFH$ for the complements of some other $2$-bridge knots.
\item State Lemma~\ref{lem:moduli-in-Sigma} precisely, and prove it.
\item Prove Lemma~\ref{spinc-torseur}.
\item The group $\SpinC(3)$ is isomorphic to $U(2)$. There is a map
  $\SpinC(3)=U(2)\to \SO(3)$ given by dividing out by
  $S^1=\left\{\left(\begin{smallmatrix} e^{i\theta} & 0 \\ 0 &
        e^{i\theta}\end{smallmatrix}\right)\right\}$.

  The usual definition of a $\SpinC$ structure is a principal
  $\SpinC(3)$-bundle $P$ over $Y$, and a bundle map from $P$ to the
  bundle of frames of $Y$, respecting the actions of $\SpinC(3)$ and
  $\SO(3)$ in the obvious sense. (This uses the homomorphism
  $\SpinC(3)\to \SO(3)$ above.) Identify this definition with
  Definition~\ref{def:homologous-vf}.
\item\label{exercise:product-decomp} Suppose that
  $(Y,\Gamma)\stackrel{D}{\rightsquigarrow}(Y',\Gamma')$ is a product
  decomposition and that $\HD=(\Sigma,\alphas,\betas)$ is a Heegaard
  diagram for $(Y,\Gamma)$ and $\delta\subset\Sigma$ is as in
  Section~\ref{sec:easy-decomps}. Let $\Sigma'$ be the result of
  cutting $\Sigma$ along $\delta$. Show that
  $(\Sigma',\alphas,\betas)$ is a sutured Heegaard diagram for $(Y',\Gamma')$.
\item \label{exercise:vanish} Suppose that $(Y,\Gamma)$ is a sutured
  manifold so that one component of $\bdy Y$ is a sphere with $n>1$
  sutures. Prove that $\SFH(Y,\Gamma)=0$. (Warning: to give an honest
  proof, you probably need to know something about admissibility
  conditions.)
\item\label{exercise:lens} Find a genus $1$ Heegaard diagram for the lens space $L(p,q)$
  (or, from the sutured perspective, $L(p,q)\setminus\bD^3$). Use this
  diagram to compute $\HFa(L(p,q))=\SFH(L(p,q)\setminus\bD^3)$.
\item Which surgery on the trefoil is shown in Figure~\ref{fig:Heegaard-diags}?
\item\label{exercise:rect} Let $r$ be a rectangle in the plane, i.e., a topological disk
  with boundary consisting of four smooth arcs. Show that there is a
  unique holomorphic $2$-fold branched cover $r\to \bD^2$ sending the
  corners to $\pm i$. (Hint: start by applying the Riemann mapping
  theorem. Then use the fact that branched double covers $r\to \bD^2$
  correspond to involutions of $r$.)
\end{enumerate}

\section{Surface decompositions and sutured Floer homology}
\renewcommand{\thesec}{Lecture 3} 
\renewcommand{\sectitle}{Surface decompositions and sutured Floer homology}
Recall that to each balanced sutured manifold $(Y,\Gamma)$ we have associated an $\Field$-vector space $\SFH(Y,\Gamma)$. Moreover, $\SFH(Y,\Gamma)$ is a direct sum over (relative) $\SpinC$-structures on $(Y,\Gamma)$,
\[
\SFH(Y,\Gamma)=\bigoplus_{\spinc\in\SpinC(Y,\Gamma)}\SFH(Y,\Gamma,\spinc).
\]

\begin{theorem}\label{thm:Floer-decomp}\cite[Theorem 1.3]{Juhasz08:SuturedDecomp}
  Let $(Y,\Gamma)$ be a balanced sutured manifold and
  $(Y,\Gamma)\stackrel{S}{\rightsquigarrow}(Y',\Gamma')$ a sutured
  manifold decomposition. Suppose that $S$ is good
  (Definition~\ref{def:good}). Then
  \[
  \SFH(Y',\Gamma')\cong \bigoplus_{\spinc\in O(S)}\SFH(Y,\Gamma,\spinc).
  \]
\end{theorem}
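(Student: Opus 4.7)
\medskip

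\noindent\textbf{Proof Plan.} The plan is to construct a Heegaard diagram for $(Y,\Gamma)$ adapted to the decomposing surface $S$ (a \emph{surface diagram}, in Juh\'asz's terminology), and to identify $\SFC(Y',\Gamma')$ with a natural subcomplex of $\SFC(Y,\Gamma)$---namely the one spanned by generators in $\SpinC$-structures lying in $O(S)$.

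First I would construct the adapted diagram. Starting from a self-indexing Morse function $f\co Y\to[0,3]$ as in the proof of Theorem~\ref{thm:HD-exists}, I would arrange $S$ to be transverse to the level sets of $f$ away from $\bdy S$, with the trajectories of $\nabla f|_S$ flowing $\bdy S$ up into $R_+$ and down into $R_-$. The goodness hypothesis (Definition~\ref{def:good}) is precisely what makes this possible: since every component of $\bdy S$ meets both $R_+$ and $R_-$, no component of $S$ can get trapped inside $\Sigma=f^{-1}(3/2)$ as a closed curve, and $S$ instead meets $\Sigma$ in a collection $A$ of properly embedded arcs with endpoints on $\bdy\Sigma$. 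After small isotopies and handleslides I would further arrange $\alphas\cup\betas$ to be disjoint from $A$.

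Second, I would check that cutting $\Sigma$ along $A$ yields a sutured Heegaard diagram $(\Sigma',\alphas,\betas)$ for $(Y',\Gamma')$, with the new arcs of $\bdy\Sigma'$ giving the new portion of $\Gamma'$. This is a direct Morse-theoretic computation: decomposing $Y$ along $S$ removes exactly the neighborhood of the set of gradient trajectories passing through $S$, which at the level of $\Sigma$ is the neighborhood of $A$. I would then establish the central identification: a generator $\x\in T_\alpha\cap T_\beta$ lifts to a generator of $\SFC(Y',\Gamma')$ if and only if every component of $\x$ lies on the ``positive'' side of the corresponding arc of $A$, which in turn holds if and only if $\spinc(\x)\in O(S)$. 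This is a local computation: the vector field $v_\x$ realizing $\spinc(\x)$, obtained by modifying $\nabla f$ along the flow lines of $\x$, is positively transverse to $S$ exactly when no component of $\x$ sits on the ``wrong'' side of a cutting arc.

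Third, I would match differentials. Two claims are needed: (a) no holomorphic disk connects a positive-side generator to one with a component on the negative side; and (b) the count of holomorphic disks between positive-side generators agrees with the differential count in $\Sym^n(\Sigma')$ computing $\bdy$ on $\SFC(Y',\Gamma')$. Both rely on positivity of intersections applied to the arcs $A$. Since $A$ is disjoint from $\alphas\cup\betas$, the multiplicities of a domain on the two sides of an arc of $A$ are tightly constrained by the vanishing of multiplicities at $\bdy\Sigma$; for disks between positive-side generators this forces the domain to be supported on the positive side, giving a bijection with domains on $\Sigma'$. Working with a split (or sufficiently degenerate) almost complex structure on $\Sym^n(\Sigma)$, holomorphic disks supported on the positive side correspond exactly to holomorphic disks in $\Sym^n(\Sigma')$, completing the chain-level identification.

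The main obstacle is the first step: producing the adapted Morse function. Simultaneously achieving self-indexing, compatibility with $(R_\pm,\Gamma)$, and a clean intersection $S\cap\Sigma=A$ is delicate, and Juh\'asz carries this out by an intricate induction occupying~\cite[Sections~4--6]{Juhasz08:SuturedDecomp}. Without the goodness hypothesis, $S\cap\Sigma$ would contain closed components that no amount of arc-cutting can remove, so the construction genuinely requires Proposition~\ref{prop:good-hierarchy}. By contrast, once the adapted diagram is in hand, the differential matching in step three is a relatively standard positivity-plus-gluing argument.
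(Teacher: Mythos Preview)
Your proposal has a genuine gap in the very first step: the picture of an adapted Heegaard diagram is wrong, and the error propagates through the rest of the argument.

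You imagine arranging $S$ so that $S\cap\Sigma$ is a collection of properly embedded arcs $A$ disjoint from $\alphas\cup\betas$, and then cutting $\Sigma$ along $A$ to get a diagram $(\Sigma',\alphas,\betas)$ for $(Y',\Gamma')$.  An Euler characteristic count shows this cannot work once $\chi(S)<0$.  Cutting $\Sigma$ along $|A|$ arcs while keeping the same $\alphas$ gives $\chi(R'_-)=\chi(R_-)+|A|$, whereas the decomposition demands $\chi(R'_-)=\chi(R_-)+\chi(S)$.  For a genus-one Seifert surface of a knot, $\chi(S)=-1$, so you would need $|A|=-1$.  Your construction is exactly the one used for product decompositions (Proposition~\ref{prop:product-decomp}), and it is correct precisely in that case; it does not generalize.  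Goodness of $S$ constrains $\bdy S$, not the interior, and does not force $S\cap\Sigma$ to be arcs, let alone arcs disjoint from $\alphas\cup\betas$.

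The correct adapted diagram (Definition~\ref{def:adapted-HD}) is quite different: $S$ is represented by a \emph{subsurface} $P\subset\Sigma$ with $\bdy P=A\cup B$, and the curves are allowed to enter $P$; one only requires $A\cap\betas=\emptyset$ and $B\cap\alphas=\emptyset$.  Outer generators are those with $\x\cap P=\emptyset$ (Lemma~\ref{lem:is-outer}), and the Heegaard surface $\Sigma'$ for $(Y',\Gamma')$ is obtained by \emph{doubling} $P$ (gluing two copies of $P$ to $\Sigma\setminus\interior(P)$ along $A$ and $B$ respectively), not by cutting along arcs.  Because $\alpha$- and $\beta$-curves genuinely pass through $P$, your positivity-of-intersections argument for matching differentials does not apply: a domain between outer generators can and does cover parts of $P$.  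The paper handles this by a neck-stretching argument (collapsing $A$ and $B$, analyzing the degenerate limit via Gromov compactness and gluing, and showing the pieces over $P$ are forced to be trivial bigons); Juh\'asz's original proof uses nice diagrams instead.  Either way, the differential comparison is substantially more involved than what you have outlined.
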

(Theorem~\ref{thm:Floer-decomp} holds with ``good'' replaced by
``balanced-admissible''; \cite[Theorem 1.3]{Juhasz08:SuturedDecomp} is
actually the more general statement.)

The notation $O(S)$ needs explanation. A $\SpinC$ structure is called
\emph{outer} with respect to $S$ if it can be represented by a
(non-vanishing) vector field $v$ which is never equal to $-\nu_S$, the
(negative) normal vector field to $S$.  $O(S)$ denotes the set of
outer $\SpinC$ structures. (This definition can be rephrased in terms
of relative Chern classes; see~\cite[Lemma 3.10]{Juhasz08:SuturedDecomp}.)

A key step in proving Theorem~\ref{thm:Floer-decomp} is to study
Heegaard diagrams adapted to the surface decomposition:
\begin{definition}\label{def:adapted-HD}
  Fix a sutured manifold $(Y,\Gamma)$ and a decomposing surface $S$ in
  $Y$. By a \emph{Heegaard diagram for $(Y,\Gamma)$ adapted to $S$} we
  mean a sutured Heegaard diagram $\HD=(\Sigma,\alphas,\betas)$ for
  $(Y,\Gamma)$ together with a subsurface $P\subset \Sigma$ with the
  following properties:
  \begin{enumerate}
  \item The boundary of $P$ is the union $A\cup B$ where $A$ and $B$
    are disjoint unions of smooth arcs.
  \item $\bdy A=\bdy B=A\cap B\subset \bdy\Sigma$.
  \item $A\cap \betas=\emptyset$ and $B\cap \alphas=\emptyset$.
  \item Let $S(P)=P \cup A\times [1/2,1]\cup B\times[0,1/2]\subset
    Y(\HD)=Y$. Then $S(P)$ is isotopic to $S$, where each intermediate
    surface in the isotopy is a decomposing surface.
  \end{enumerate}
  See Figure~\ref{fig:adapted-HD}.
\end{definition}

\begin{figure}
  \centering
  \includegraphics{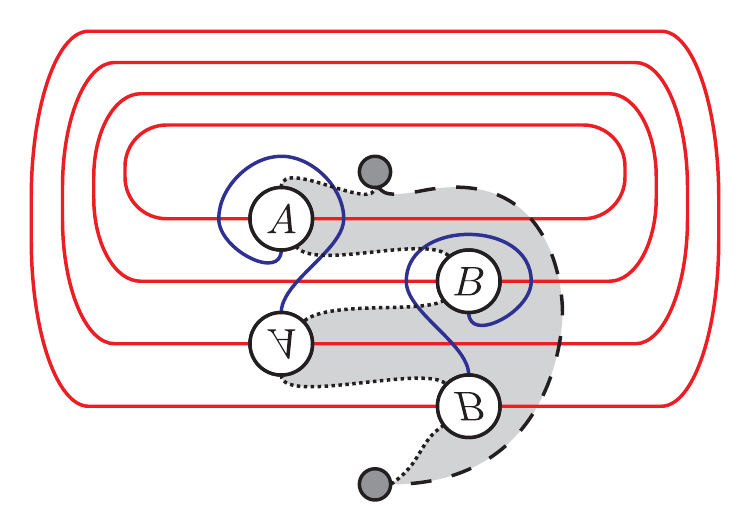}
  \caption{\textbf{A sutured Heegaard diagram adapted to a decomposing
      surface.} This is a Heegaard diagram for the complement of the
    figure 8 knot, and $S(P)$ is a minimal-genus Seifert surface for
    the Figure 8 knot. The polygon $P$ is shaded. $A$ is the
    dashed arc and $B$ is the dotted arc.}
  \label{fig:adapted-HD}
\end{figure}

\begin{proposition}\cite[Proposition 4.4]{Juhasz08:SuturedDecomp}
  Let $(Y,\Gamma)$ be a balanced sutured manifold and $S$ a good
  decomposing surface for $(Y,\Gamma)$. Then there is a sutured
  Heegaard diagram for $(Y,\Gamma)$ adapted to $S$.
\end{proposition}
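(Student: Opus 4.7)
The plan is to construct a self-indexing Morse function $f\co Y\to [0,3]$ of the kind produced in Theorem~\ref{thm:HD-exists}, chosen so that $S$ becomes ``vertical'' with a horizontal band at the middle level $3/2$. The associated Heegaard diagram, together with this horizontal band, will serve as the data $(\HD,P)$.

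First, normalize $S$ near $\bdy Y$: using the product structure $\gamma\cong\Gamma\times[0,3]$ in which $f$ will project onto $[0,3]$, isotope $S$ through decomposing surfaces so that each arc of $\bdy S\cap \gamma$ has the form $\{\mathit{pt}\}\times[0,3]$, running straight from $R_-$ up to $R_+$. The balanced hypothesis already forces such arcs to run between the two boundary circles of their annular component of $\gamma$, and the goodness of $S$ guarantees that every boundary component of $S$ actually contains such arcs, so each component of $\bdy S$ alternates between vertical arcs in $\gamma$, arcs in $R_+$, and arcs in $R_-$.

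Next, build a Morse function $h\co S\to [0,3]$ satisfying $h^{-1}(0)=\bdy S\cap R_-$, $h^{-1}(3)=\bdy S\cap R_+$, with no index $0$ or $2$ critical points and all index $1$ critical points at level $3/2$. This is possible since $S$ has no closed components and each boundary component meets both $R_\pm$. Extend $h$ to a Morse function on a tubular neighborhood $\nbd(S)$ agreeing, near $\bdy Y$, with the collar model used in Theorem~\ref{thm:HD-exists}; then extend further to a Morse function $f$ on all of $Y$, cancel index $0$ and $3$ critical points, and rearrange to make $f$ self-indexing, keeping $f$ fixed on a smaller neighborhood of $\bdy Y\cup S$ throughout. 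Setting $\Sigma=f^{-1}(3/2)$ and $P:=S\cap \Sigma$, one obtains a subsurface (with corners at the level-$3/2$ critical points of $h$) whose boundary splits as $A\cup B$ according to whether $S$ extends upward to $R_+$ or downward to $R_-$ across that arc. Conditions (1)--(4) of Definition~\ref{def:adapted-HD} then follow; in particular, an $A$-arc cannot meet a $\beta$-circle, because the upward gradient flow from $A$ stays inside $S\cup \nbd(S)$ and hence avoids the index-$2$ critical points of $f$, which are the sources of the $\beta$-circles. The condition $B\cap \alphas=\emptyset$ is symmetric.

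The main obstacle is the third step: extending the adapted Morse function from $\nbd(\bdy Y)\cup \nbd(S)$ to all of $Y$ and then performing the critical-point moves without spoiling the adaptedness on $\nbd(S)$. Cancellations of index $0$ and $3$ critical points must be carried out along paths disjoint from $S$, and the gradient-like vector field used in the self-indexing rearrangement must be chosen tangent to $S$ in a neighborhood of $S$. Generically both are possible, because $S$ is only $2$-dimensional and a generic arc in a $3$-manifold avoids it; this is the analogue, in the adapted setting, of the critical-point management carried out in the original construction of a sutured Heegaard diagram.
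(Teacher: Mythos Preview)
Your overall strategy matches the paper's hint (the paper itself only says the proof is ``similar to but somewhat more intricate than'' the Morse-theoretic construction of an ordinary sutured Heegaard diagram, and defers to Juh\'asz). But there is a genuine dimensional error in your execution.

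You want $P := S \cap \Sigma$ to be a subsurface of $\Sigma$, as Definition~\ref{def:adapted-HD} requires, yet you also arrange $h = f|_S$ to be a \emph{Morse} function with its index-$1$ critical points at level $3/2$. These two things are incompatible: if $h$ is Morse, then $S$ is transverse to $\Sigma = f^{-1}(3/2)$ away from the critical points, so $S \cap \Sigma = h^{-1}(3/2)$ is a $1$-complex---arcs meeting in figure-X's at the saddles---not a $2$-dimensional subsurface. The ``horizontal band at the middle level'' in your opening sentence is the correct picture, but it directly contradicts $h$ being Morse on $S$.

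The fix is to drop the Morse condition on $S$ in favor of a genuine plateau: arrange $f|_S \equiv 3/2$ on an honest $2$-dimensional subsurface $P \subset S$ (for instance, a regular neighborhood in $S$ of the graph $h^{-1}(3/2)$ for your Morse $h$), with $f|_S$ strictly increasing to $3$ on the components of $S \setminus P$ meeting $R_+$ and strictly decreasing to $0$ on those meeting $R_-$; then isotope so that $P$ lies inside $\Sigma$. An alternative route, closer to Juh\'asz's original argument, is to first perform the decomposition along $S$, build a Morse function and Heegaard diagram on the decomposed manifold $(Y',\Gamma')$, and observe that the resulting Heegaard surface already contains a parallel copy of $S$ as a subsurface; regluing yields the adapted diagram on $(Y,\Gamma)$. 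With either correction in place, the rest of your outline---normalizing $\bdy S$ in $\gamma$ using goodness, keeping the index-$1$ and index-$2$ critical points of $f$ away from $\nbd(S)$, and deducing $A \cap \betas = B \cap \alphas = \emptyset$ from the behavior of the gradient flow near $S$---goes through essentially as you wrote.
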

The proof is similar to but somewhat more intricate than the proof of Theorem~\ref{thm:HD-exists}.

We will return to the proof of Theorem~\ref{thm:Floer-decomp} in
Section~\ref{sec:decomp-proof}.

\subsection{Application: knot genus, (Thurston norm, fiberedness)}
We recall Theorem~\ref{thm:knot-genus}:
\begin{theorem}\cite[Theorem 1.2]{OS04:GenusBounds}
  $\HFKa(S^3,K)$ detects the genus of $K$. Specifically
  \[
  g(K)=\max\{j\mid \HFKa_*(K,j)\neq 0\}.
  \]
\end{theorem}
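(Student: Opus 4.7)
The plan is to prove both inequalities in the claimed equality separately, working throughout with the balanced sutured manifold $(Y,\Gamma) := (S^3\setminus\nbd(K),\Gamma_\mu)$ where $\Gamma_\mu$ consists of two meridional sutures, so that $\SFH(Y,\Gamma)\cong \HFKa(K)$ (Section~\ref{sec:HFa}) and the Alexander grading is identified with the relative $\SpinC$-grading on this sutured manifold.

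For the \emph{upper bound} $\max\{j \mid \HFKa_*(K,j) \neq 0\}\le g(K)$, I would fix a minimal-genus Seifert surface $F$ for $K$ and build a sutured Heegaard diagram for $(Y,\Gamma)$ adapted to $F$ in the sense of Definition~\ref{def:adapted-HD} (the existence of such a diagram is the analogue for $S=F$ of the proposition following that definition). The standard formula relating the Alexander grading to relative Chern classes evaluated against $F$ then shows that for every generator $\x\in T_\alpha\cap T_\beta$ one has $A(\x)\le g(F)=g(K)$, with the bound saturated only by generators extremal with respect to the polygon $P$ in the diagram. Hence $\HFKa_*(K,j)=0$ for $j>g(K)$.

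For the \emph{lower bound} $\max\{j \mid \HFKa_*(K,j)\neq 0\}\ge g(K)$, which is the substantive direction, I would assume $K$ is nontrivial (the unknot case is immediate since $\HFKa(\text{unknot},0)=\Field$) and use surface decompositions twice: first along a minimal-genus Seifert surface $F$ to extract the top Alexander summand, and then along a Gabai hierarchy to land in a product sutured manifold. After replacing $F$ by an isotopic good decomposing surface via Proposition~\ref{prop:good-hierarchy}, Theorem~\ref{thm:Floer-decomp} gives
\[
\SFH(Y',\Gamma') \;\cong\; \bigoplus_{\spinc\in O(F)} \SFH(Y,\Gamma,\spinc),
\]
and a $\SpinC$-theoretic computation identifies $O(F)$ with the unique $\SpinC$-structure at Alexander grading $g=g(K)$, giving $\SFH(Y',\Gamma')\cong\bigoplus_i \HFKa_i(K,g)$. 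Since $F$ is minimal genus and $K$ is nontrivial, $(Y',\Gamma')$ is taut; combining Theorem~\ref{thm:hierarchy} with Propositions~\ref{prop:balanced-hierarchy} and~\ref{prop:good-hierarchy} produces a sequence of good surface decompositions
\[
(Y',\Gamma') \stackrel{S_1}{\rightsquigarrow} (Y_2,\Gamma_2) \stackrel{S_2}{\rightsquigarrow} \cdots \stackrel{S_{n-1}}{\rightsquigarrow} (Y_n,\Gamma_n)
\]
ending in a product sutured manifold. By Lemma~\ref{lem:HF-prod-sut-mfld}, $\SFH(Y_n,\Gamma_n)\cong\Field$, and iterating Theorem~\ref{thm:Floer-decomp} shows each $\SFH(Y_{i+1},\Gamma_{i+1})$ is a direct summand of $\SFH(Y_i,\Gamma_i)$. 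Therefore $\SFH(Y',\Gamma')\supseteq\Field$, whence $\HFKa_*(K,g)\neq 0$.

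\textbf{Main obstacle.} The hardest step will be the $\SpinC$-theoretic identification of $O(F)$ with the top Alexander grading: this uses the precise relationship between outer $\SpinC$ structures, relative Chern classes of $(\spinc,F)$, and the fact that minimality of genus forces the bound $\langle c_1(\spinc),[F,\bdy F]\rangle \le -\chi(F)+|\bdy F\cap\Gamma|/2$ to be saturated by a unique $\spinc$. A secondary subtlety is ensuring that the balanced, good, and (implicit) admissibility hypotheses propagate through every stage of the hierarchy so that Theorem~\ref{thm:Floer-decomp} applies iteratively; Lemma~\ref{lem:balanced-to-balanced} handles the balanced condition, but some care is needed to re-apply Proposition~\ref{prop:good-hierarchy} at each stage so that the outer-$\SpinC$ summand framework remains valid.
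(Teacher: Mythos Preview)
Your proposal is correct and follows essentially the same route as the paper: Proposition~\ref{prop:easy-direction} for the vanishing above genus, then Theorem~\ref{thm:Floer-decomp} applied first to a minimal-genus Seifert surface and then iteratively along a good sutured hierarchy (Theorem~\ref{thm:hierarchy}, Propositions~\ref{prop:balanced-hierarchy} and~\ref{prop:good-hierarchy}, Lemma~\ref{lem:HF-prod-sut-mfld}) to exhibit an $\Field$-summand in the extremal Alexander grading. One small caveat: in the paper's conventions (Lemma~\ref{lem:is-outer} together with the formula $A(\x)=|\x\cap P|-g(F)$ in the proof of Proposition~\ref{prop:easy-direction}) the outer $\SpinC$-structures for $F$ sit at Alexander grading $-g(K)$ rather than $+g(K)$, so you should either reverse the orientation of $F$ or invoke the symmetry $\HFKa_i(K,j)\cong\HFKa_{i-2j}(K,-j)$ to transfer nonvanishing to the top grading---this also resolves your ``main obstacle,'' which in the adapted-diagram framework reduces to that combinatorial formula rather than a Chern-class computation.
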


Similarly:
\begin{theorem}\label{thm:detect-Thurston-norm}
  (\cite[Theorem 2.2]{HeddenNi10:small}, building on \cite[Theorem 1.1]{OS04:GenusBounds})
  For $Y^3$ closed, $\HFa(Y)$ detects the Thurston
  norm: for $h\in H_2(Y)$,
  \[
  x(h)=\max\{\langle c_1(\spinc),h\rangle \mid \HFa(Y,\spinc)\neq 0\}.
  \]
\end{theorem}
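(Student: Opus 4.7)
The plan is to reduce everything to sutured Floer homology via the identification $\HFa(Y) = \SFH(Y\setminus\nbd(\bD^3),\Gamma_0)$ of Section~\ref{sec:HFa} (where $\Gamma_0$ is a single suture on the $S^2$ boundary), and then combine the decomposition formula (Theorem~\ref{thm:Floer-decomp}) with Gabai's sutured hierarchy (Theorem~\ref{thm:hierarchy}, refined by Propositions~\ref{prop:balanced-hierarchy} and~\ref{prop:good-hierarchy}) and the computation $\SFH=\Field$ for product sutured manifolds (Lemma~\ref{lem:HF-prod-sut-mfld}). The proof splits into an upper bound (an \emph{adjunction inequality}) and a matching lower bound (sharpness).

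For the upper bound, fix $\spinc$ with $\HFa(Y,\spinc)\neq 0$ and $h\in H_2(Y)$. Represent $h$ by a connected, norm-minimizing, embedded surface $S\subset Y\setminus\nbd(\bD^3)$, and by Proposition~\ref{prop:good-hierarchy} arrange $S$ to be a good decomposing surface. Decomposing along $S$ and applying Theorem~\ref{thm:Floer-decomp} gives
\[
\SFH(Y',\Gamma')\;\cong\;\bigoplus_{\spinc'\in O(S)}\SFH(Y\setminus\nbd(\bD^3),\Gamma_0,\spinc').
\]
The relative-Chern-class characterization of outer $\SpinC$ structures (\cite[Lemma~3.10]{Juhasz08:SuturedDecomp}) then yields $\langle c_1(\spinc'),[S]\rangle\leq -\chi(S)=x(h)$ for every $\spinc'\in O(S)$, so $\HFa(Y,\spinc)\neq 0$ forces $\langle c_1(\spinc),h\rangle\leq x(h)$.

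For the lower bound one keeps the same $S$ and continues: Theorem~\ref{thm:hierarchy}, together with the balanced-and-good refinements, produces a sutured hierarchy
\[
(Y\setminus\nbd(\bD^3),\Gamma_0)\stackrel{S}{\rightsquigarrow}(Y_1,\Gamma_1)\stackrel{S_1}{\rightsquigarrow}\cdots\stackrel{S_{n-1}}{\rightsquigarrow}(Y_n,\Gamma_n)
\]
terminating in a product sutured manifold. Iterating Theorem~\ref{thm:Floer-decomp} and invoking Lemma~\ref{lem:HF-prod-sut-mfld} gives
\[
\Field\;\cong\;\SFH(Y_n,\Gamma_n)\;\cong\;\bigoplus_{\spinc'\in O(S)\cap O(S_1)\cap\cdots\cap O(S_{n-1})}\SFH(Y\setminus\nbd(\bD^3),\Gamma_0,\spinc').
\]
Hence there is a unique $\spinc^\star$ in this intersection with $\HFa(Y,\spinc^\star)\neq 0$. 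Identifying $\spinc^\star$ as the ``extremally outer'' class with respect to the initial surface $S$ then gives $\langle c_1(\spinc^\star),h\rangle=-\chi(S)=x(h)$, which is the sharpness statement.

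The hardest step is the last identification: showing that the surviving $\spinc^\star$ really saturates the bound $\langle c_1(\spinc^\star),[S]\rangle=-\chi(S)$ rather than hitting some smaller outer value. This requires a careful accounting of how the relative first Chern class changes under the initial decomposition along $S$, essentially the content of~\cite[Lemma~3.10]{Juhasz08:SuturedDecomp} tracked across the entire hierarchy. Subsidiary technicalities, to be dispatched before the main argument, are the reduction from arbitrary $h$ to a connected, indivisible representative (using additivity of $x$ and of Chern-class pairings on disjoint components) and the degenerate cases in which $x(h)=0$ or $S$ is a sphere or torus, where the Chern-class inequality is vacuous but one still needs some input, typically handled by a direct sutured computation or by appeal to Theorem~\ref{thm:knot-genus} for the knot case.
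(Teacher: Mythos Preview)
The paper does not give a proof of this theorem at all: it is stated with citations, and the text immediately following it explains that Ozsv\'ath--Szab\'o proved the statement for a \emph{twisted} version of Heegaard Floer homology, and that Hedden--Ni then deduced the untwisted $\HFa$ statement via the universal coefficient theorem. More to the point, the paper explicitly records (Section~3.3, open question~(1), and the paragraph following Theorem~\ref{thm:fiberedness}) that \emph{there is no known proof of Theorem~\ref{thm:detect-Thurston-norm} via sutured Floer homology}. So your proposal is not an alternative route to the paper's proof; it is an attempt at what the paper flags as an open problem.

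As for concrete gaps in your sketch: the most basic one is that a class $h\in H_2(Y)$ is represented by a \emph{closed} surface $S$, and Theorem~\ref{thm:Floer-decomp} (even in its stronger balanced-admissible form) requires the decomposing surface to have no closed components. You would need to puncture $S$ through the removed ball and check carefully that the resulting surface is good, that the decomposition stays balanced and taut, and that the relative Chern-class bookkeeping (your ``hardest step'') survives this modification. Your upper-bound argument is also not right as written: Theorem~\ref{thm:Floer-decomp} identifies the summand supported in $O(S)$, but says nothing about $\spinc\notin O(S)$, so it does not by itself give the adjunction inequality for all $\spinc$ with $\HFa(Y,\spinc)\neq 0$. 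Finally, your iterated formula $\bigoplus_{\spinc'\in O(S)\cap O(S_1)\cap\cdots}$ conflates $\SpinC$-structures on different sutured manifolds; what actually follows from iteration is only that $\SFH(Y_n,\Gamma_n)$ is a summand of $\bigoplus_{\spinc\in O(S)}\SFH(Y\setminus\bD^3,\Gamma_0,\spinc)$. That weaker statement suffices for nonvanishing of \emph{some} outer $\spinc$, but the precise identification of $\langle c_1(\spinc^\star),h\rangle$ with $x(h)$---which you yourself flag as the crux---is exactly where the argument is incomplete, and presumably where the genuine difficulty lies.
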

Here, $c_1(\spinc)$ denotes the first Chern class of the
$\SpinC$-structure $\spinc$ (which is the same as the Euler class of
the 2-plane field orthogonal to $\spinc$, if we think of $\spinc$ as a
vector field). Ozsv\'ath-Szab\'o proved this result for a
twisted version of Heegaard Floer homology; Hedden-Ni deduce the untwisted statement
using the universal coefficient theorem.

\begin{theorem}\label{thm:fiberedness}
  (\cite[Theorem 1.1]{Ni07:FiberedKnot}, building
  on~\cite[Theorem 1.4]{Ghiggini08:FiberedGenusOne}) $\HFKa(K)$ detects fibered
  knots: $S^3\setminus K$ fibers over $S^1$ if and only if $\sum_i
  \dim\HFKa_i(K,j)=1$.
\end{theorem}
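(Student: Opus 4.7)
The statement is an iff, and I would prove the two directions separately. For the ``only if'' direction, suppose $K\subset S^3$ is fibered with fiber $F$ (a minimal-genus Seifert surface of genus $g=g(K)$) and monodromy $\phi\co F\to F$. Using the open-book construction of Example~\ref{eg:open-book}, I would build a sutured Heegaard diagram $\HD$ for $(S^3\setminus\nbd(K),\Gamma)$ (two meridional sutures) adapted to $F$ in the sense of Definition~\ref{def:adapted-HD}: namely, take $P$ to be one of the two copies of $F$ used in the construction. Surface decomposition along $F$ produces a product sutured manifold $(Y',\Gamma')=([0,1]\times F,[0,1]\times\bdy F)$, and by Lemma~\ref{lem:HF-prod-sut-mfld} we have $\SFH(Y',\Gamma')\cong\Field$. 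Theorem~\ref{thm:Floer-decomp} identifies this with $\bigoplus_{\spinc\in O(F)}\SFH(S^3\setminus\nbd(K),\Gamma,\spinc)$, and a computation of relative Chern classes (as in~\cite[Lemma 3.10]{Juhasz08:SuturedDecomp}) shows that the outer $\SpinC$ structures with respect to a genus-minimizing $F$ are exactly those lying in the top Alexander grading $j=g(K)$. Hence $\bigoplus_i \HFKa_i(K,g(K))\cong\Field$.

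For the ``if'' direction, suppose $\sum_i\dim\HFKa_i(K,g(K))=1$. Applying Theorem~\ref{thm:detect-Thurston-norm} (or its version for $\HFKa$ implicit in Theorem~\ref{thm:knot-genus}), the top nonvanishing Alexander grading equals $g(K)$. Let $F$ be a minimal-genus Seifert surface for $K$, viewed as a good decomposing surface (Definition~\ref{def:good}) in the balanced sutured knot complement $(Y,\Gamma)$. Let $(Y',\Gamma')$ be the result of decomposing $(Y,\Gamma)$ along $F$; by the same Chern class calculation as above, Theorem~\ref{thm:Floer-decomp} yields
\[
\SFH(Y',\Gamma')\cong\bigoplus_i\HFKa_i(K,g(K))\cong\Field.
\]
Moreover, $(Y',\Gamma')$ is taut and balanced (using Lemma~\ref{lem:balanced-to-balanced}, plus tautness of $(Y,\Gamma)$ and the fact that $F$ is Thurston-norm minimizing). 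To conclude that $K$ is fibered with fiber $F$, it suffices to prove that $(Y',\Gamma')$ is a product sutured manifold $[0,1]\times F$.

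The engine for the last step is the following key lemma, which I would establish separately: \emph{if $(Y',\Gamma')$ is a taut, balanced sutured manifold with $\SFH(Y',\Gamma')\cong\Field$, then $(Y',\Gamma')$ is a product sutured manifold.} The plan is to prove the contrapositive by induction along a sutured hierarchy (Theorem~\ref{thm:hierarchy}), using the refinements of Propositions~\ref{prop:balanced-hierarchy} and~\ref{prop:good-hierarchy} so that every decomposing surface in the hierarchy is good. At each stage, Theorem~\ref{thm:Floer-decomp} writes $\SFH$ of the predecessor as a direct sum of two summands (outer vs.\ ``inner'' $\SpinC$ structures) indexed by the new decomposing surface; tautness and nontriviality of the decomposition force both summands to be nonzero, so $\SFH$ can only remain rank one across a product decomposition (Proposition~\ref{prop:product-decomp}). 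Since the hierarchy terminates at a product (where $\SFH=\Field$ by Lemma~\ref{lem:HF-prod-sut-mfld}), rank-one implies that the entire hierarchy consists of product decompositions, and one concludes by showing that a sutured manifold which reduces to a product via product decompositions is itself a product.

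The main obstacle is precisely this last claim -- that in a nontrivial, non-product step of a sutured hierarchy of a taut balanced sutured manifold, the two $\SpinC$-summands of Theorem~\ref{thm:Floer-decomp} are \emph{both} nonvanishing. For generic decomposing surfaces in the hierarchy this is not automatic; the careful choice of ``horizontal'' decomposing surfaces, and the control of outer $\SpinC$ classes on each side, is the heart of the matter and is what Ghiggini's argument in genus one~\cite{Ghiggini08:FiberedGenusOne} and Ni's argument in general~\cite{Ni07:FiberedKnot} accomplish. The strategy is to use Gabai's existence of taut depth-one foliations together with the decomposition formula to produce, in any horizontally prime non-product piece, a decomposing surface whose outer and inner $\SpinC$ classes both carry nonzero $\SFH$, yielding at least rank two and completing the contrapositive. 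Once the key lemma is in hand, the rest of the argument above goes through and $K$ is fibered with fiber $F$.
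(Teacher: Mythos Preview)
The paper does not actually prove Theorem~\ref{thm:fiberedness}. It is stated with attribution to Ni and Ghiggini, the easy direction is left as Exercise~5 of Section~3, and the paper remarks only that the hard direction ``can also be proved using sutured Floer homology, though the argument is more intricate, and close in spirit to Ni's original proof.'' So there is no ``paper's own proof'' to compare against beyond that.

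Your easy (``only if'') direction is exactly the intended solution to that exercise: the open-book diagram of Example~\ref{eg:open-book}, surface decomposition along the fiber, and Lemma~\ref{lem:HF-prod-sut-mfld}. One small discrepancy: in the paper's conventions the outer $\SpinC$ structures for a minimal-genus $F$ sit in Alexander grading $-g(K)$, not $+g(K)$ (see the proof of Theorem~\ref{thm:knot-genus}); the symmetry $\HFKa_i(K,j)\cong\HFKa_{i-2j}(K,-j)$ reconciles this, but you should say so.

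For the hard direction, your reduction to the key lemma ``taut, balanced, $\SFH\cong\Field$ $\Rightarrow$ product'' is correct and is indeed how Juh\'asz packages the result. But your sketch of the key lemma has a genuine gap. The claim that for a \emph{generic} step in a sutured hierarchy the ``inner'' $\SpinC$ summand is nonzero is simply not true, and even when it is true it does not follow from tautness plus nontriviality alone. Concretely: if $(Y,\Gamma)\stackrel{S}{\rightsquigarrow}(Y',\Gamma')$ with both taut, Theorem~\ref{thm:Floer-decomp} tells you $\SFH(Y',\Gamma')$ is a summand of $\SFH(Y,\Gamma)$, but says nothing about the complementary summand; having $\SFH$ stay rank one across a step does \emph{not} force that step to be a product decomposition. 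So the induction you outline does not run. The actual argument (in Juh\'asz's version) first reduces to the \emph{horizontally prime} case and then produces, in any non-product horizontally prime piece, two homologically distinct surfaces each of which is the outer side of some decomposition with nonzero $\SFH$---this is what forces rank $\geq 2$. Your final paragraph gestures at this, but as written it is a citation rather than an argument; the substantive work (why such surfaces exist and why their outer classes differ) is exactly what Ni and Juh\'asz supply and is not recoverable from the hierarchy alone.
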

Ni's result is, in fact, more general than
Theorem~\ref{thm:fiberedness}: it holds for nullhomologous knots in
arbitrary 3-manifolds. There is also an analogous statement for closed
3-manifolds~\cite[Theorem 1.1]{Ni09:FiberedMfld}.

In the rest of this section, we will sketch a proof of
Theorem~\ref{thm:knot-genus}. First, the easy direction:
\begin{proposition}\label{prop:easy-direction}\cite{OS04:Knots}
  Fix a Seifert surface $F$ for a knot $K$ in $S^3$.  Then
  $\HFKa_*(K,j)=0$ if $j<-g(F)$ or $j>g(F)$ (where $g(F)$ is the genus
  of $F$).
\end{proposition}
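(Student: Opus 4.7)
The plan is to use a sutured Heegaard diagram adapted to $F$, read off the Alexander grading of each generator combinatorially, and check directly that $|A(\x)| \leq g(F)$ for every generator $\x$.

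First I would verify that $F$ is a good decomposing surface in $(Y,\Gamma) = (S^3 \setminus \nbd(K), \Gamma)$ with two meridional sutures: the unique boundary component $K$ of $F$ meets both annular sutures, so it intersects both $R_+$ and $R_-$. By the existence result quoted just after Definition~\ref{def:adapted-HD}, there is then a sutured Heegaard diagram $\HD = (\Sigma, \alphas, \betas)$ for $(Y,\Gamma)$ adapted to $F$, with subsurface $P \subset \Sigma$. Since $F$ deformation retracts onto $P$ (the strips $A \times [1/2, 1]$ and $B \times [0, 1/2]$ collapse back onto arcs in $\bdy P$), we have $\chi(P) = \chi(F) = 1 - 2g(F)$.

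Second I would use the identification of the Alexander grading with half the Chern-class evaluation,
\[
A(\x) = \tfrac{1}{2}\bigl\langle c_1(\spinc(\x)), [\widehat F]\bigr\rangle,
\]
where $\widehat F$ is $F$ capped off along its boundary. By the Morse-theoretic construction of Section~\ref{sec:spinc}, $\spinc(\x)$ is represented by a vector field $v_\x$ obtained from $\nabla f$ by surgery along the flowlines through the points of $\x$, so $\bigl\langle c_1(\spinc(\x)), [\widehat F]\bigr\rangle$ equals the Euler number of the plane field $v_\x^\perp|_{\widehat F}$. Because the diagram is adapted to $F$, this Euler number can be evaluated locally: away from $P$ and from the flowlines through $\x$, the vector field $v_\x$ meets $\widehat F$ transversely in a standardized way, and the total evaluation reduces to an explicit combinatorial expression in $\chi(P)$, the intersection numbers $|\alphas \cap \bdy P|$ and $|\betas \cap \bdy P|$, and $\#(\x \cap P)$. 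Plugging in $\chi(P) = 1 - 2g(F)$ and bounding $\#(\x \cap P)$ by the number of $\alpha$- and $\beta$-arcs available inside $P$ yields $|A(\x)| \leq g(F)$, which gives the desired vanishing of $\HFKa_*(K,j)$ for $|j| > g(F)$.

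The main obstacle is the local Chern-class computation: one has to carefully account for contributions near $\bdy P$ (where $v_\x$ transitions between pointing out of $R_+$ and into $R_-$) and at each point of $\x$ (where $v_\x$ deviates from the gradient flow), keeping signs and corner contributions straight. Once those local terms are in hand, the combinatorial inequality $|A(\x)| \leq g(F)$ is immediate from $\chi(P) = 1 - 2g(F)$ and the fact that the Heegaard generators intersect $P$ in a controlled way.
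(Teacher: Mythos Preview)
Your setup matches the paper's: view $F$ as a good decomposing surface, pick a sutured Heegaard diagram adapted to $F$, and express the Alexander grading combinatorially in terms of $P$. The paper simply records the clean formula $A(\x) = |\x \cap P| - g(F)$ (which it says is essentially the original Ozsv\'ath--Szab\'o definition); your Chern-class computation, if carried out, should recover this, though it is a more roundabout route to the same endpoint.

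The real gap is in your upper bound. From $A(\x) = |\x \cap P| - g(F)$ together with $|\x \cap P| \geq 0$ you get $A(\x) \geq -g(F)$ for free, and that is exactly how the paper obtains the lower bound. But to get $A(\x) \leq g(F)$ by your route you would need $|\x \cap P| \leq 2g(F)$, and your justification---``bounding $\#(\x \cap P)$ by the number of $\alpha$- and $\beta$-arcs available inside $P$''---does not deliver this. The number of $\alpha$- or $\beta$-arcs crossing $P$ is a feature of the particular adapted diagram, not something controlled by $\chi(P) = 1 - 2g(F)$ alone; there is no reason it should be at most $2g(F)$ in general. Generators with $|\x \cap P| > 2g(F)$ may well exist at the chain level; they simply cancel in homology. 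The paper does not attempt a direct combinatorial upper bound: after establishing $A(\x) \geq -g(F)$, it invokes the conjugation symmetry $\HFKa_i(K,j) \cong \HFKa_{i-2j}(K,-j)$ to convert the lower bound into the upper bound. You should either supply that symmetry argument, or else explain concretely which feature of the adapted diagram forces $|\x \cap P| \leq 2g(F)$.
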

\begin{proof}[Proof sketch]
  We can view $F$ as a good decomposing surface for
  $(S^3\setminus\nbd(K),\Gamma)$, where $\Gamma$ consists of two
  meridional sutures. Choose a Heegaard diagram
  $(\Sigma,\alphas,\betas,P)$ adapted to $F$. It turns out that the
  Alexander grading of a generator $\x\in\HFKa(K)$ is given by
  $|\x\cap P|-g(F)$, where $|\x\cap P|$ denotes the number of points
  in $\x\cap P$ and $g(F)$ is the genus of $F$. (This is, in fact,
  fairly close to the original definition of the Alexander grading
  in~\cite{OS04:Knots}.) It follows that the Alexander grading is
  bounded below by $-g(F)$. For the upper bound we use a symmetry:
  $\HFKa_i(K,j)\cong \HFKa_{i-2j}(K,-j)$~\cite[Proposition
  3.10]{OS04:Knots}.
\end{proof}

\begin{remark}
  In the special case of fibered knots,
  Proposition~\ref{prop:easy-direction} can also be proved using the
  Heegaard diagram from Example~\ref{eg:open-book}. (See
  also~\cite{HKM09:contact}.) That construction can be generalized to
  give a diagram for Proposition~\ref{prop:easy-direction} in
  general. The resulting diagrams are, I think, examples of the ones
  used in this proof (i.e., they are sutured Heegaard diagrams adapted
  to the Seifert surface). One can also prove
  Proposition~\ref{prop:easy-direction} using grid diagrams~\cite{OSS:book}.
\end{remark}

\begin{proof}[Proof of Theorem~\ref{thm:knot-genus}]
  After Proposition~\ref{prop:easy-direction}, it remains to show that
  $\HFKa_*(K,-g(K))\neq 0$. Let $Y_0$ denote the exterior of $K$ and
  let $\Gamma_0$ be two meridional sutures on $\bdy Y$. Fix a
  minimal-genus Seifert surface $F$ for $K$. View $F$ as a decomposing
  surface for $Y_0$ (with $\bdy F$ intersecting each suture once). Let
  $(Y_1,\Gamma_1)$ be the result of a surface decomposition of
  $(Y_0,\Gamma_0)$ along $F$. Since $F$ was minimal genus, the resulting
  sutured manifold is taut. By Theorem~\ref{thm:Floer-decomp},
  $\SFH(Y_1,\Gamma_1)\cong \bigoplus_{\spinc\in
    O(F)}\SFH(Y,\Gamma,\spinc)$. A short argument, similar to the
  argument omitted in the proof of
  Proposition~\ref{prop:easy-direction}, shows that
  $\bigoplus_{\spinc\in O(F)}\SFH(Y,\Gamma,\spinc)=\HFKa_*(K,-g(K))$.

  So, by Theorem~\ref{thm:Floer-decomp}, it suffices to show that
  $\SFH(Y_1,\Gamma_1)$ is nontrivial.  By
  Proposition~\ref{prop:good-hierarchy}, we can find a sequence of
  sutured manifold decompositions
  \[ (Y_1,\Gamma_1)\stackrel{S_1}{\rightsquigarrow}\cdots\stackrel{S_n}{\rightsquigarrow} (Y_n,\Gamma_n)
  \]
  where each $S_i$ is good and $(Y_n,\Gamma_n)$ is a product sutured
  manifold. By Lemma~\ref{lem:HF-prod-sut-mfld},
  $\SFH(Y_n,\Gamma_n)=\Field$. So, applying
  Theorem~\ref{thm:Floer-decomp} $n$ times, $\SFH(Y_1,\Gamma_1)$ has an
  $\Field$ summand.
\end{proof}

Juh\'asz's proof, which we have sketched, of
Theorem~\ref{thm:knot-genus} is quite different from
Ozsv\'ath-Szab\'o's original proof.  Theorem~\ref{thm:fiberedness} can
also be proved using sutured Floer homology, though the argument is
more intricate, and close in spirit to Ni's original
proof. Apparently, at the time of writing there is no known proof of
Theorem~\ref{thm:detect-Thurston-norm} via sutured Floer homology.

\subsection{Sketch of proof of Theorem~\ref{thm:Floer-decomp}}\label{sec:decomp-proof}
We will sketch the proof from~\cite{GrigsbyWehrli10:naturality},
rather than Juh\'asz's original proof
from~\cite{Juhasz08:SuturedDecomp}.  Juh\'asz's original proof, which
uses Sarkar-Wang's \emph{nice
  diagrams}~\cite{SarkarWang07:ComputingHFhat}, is technically
simpler. Grigsby-Wehrli's proof has the advantage that it is more
natural (in a sense they make precise). It is also closer in spirit to
bordered Heegaard Floer theory.

I find it somewhat easier to think about the argument in the ``cylindrical'' formulation of Heegaard Floer homology~\cite{Lipshitz06:CylindricalHF}. This generalizes the description of holomorphic maps $\bD^2\to \Sym^2(\Sigma)$ used in Sections~\ref{sec:genus-2-ball} and~\ref{sec:grid-diags}. Specifically: \begin{proposition}\label{prop:cylindrical} With respect to a split complex structure on $\Sym^g(\Sigma)$, there is a correspondence between holomorphic maps 
  \begin{equation}\label{eq:non-cyl}
  v\co (\bD^2,\bdy\bD^2\cap \{\Re(z)\geq 0\}, \bdy\bD^2\cap \{\Re(z)\leq 0\})\to (\Sym^g(\Sigma),T_\alpha,T_\beta)
  \end{equation}
  and diagrams
  \begin{equation}\label{eq:cylindrical}
    \xymatrix{
      (S,\bdy_a S, \bdy_b S)\ar[r]^{u_\Sigma}\ar[d]^{u_\bD} & (\Sigma,\alphas,\betas)\\
      (\bD^2,\bdy\bD^2\cap \{\Re(z)\geq 0\}, \bdy\bD^2\cap \{\Re(z)\leq 0\})
    }
  \end{equation}
  where $S$ is a Riemann surface with boundary $\bdy S=\bdy_aS\cup \bdy_b S$; $u_\Sigma$ and $u_\bD$ are holomorphic; and $u_\bD$ is a $g$-fold branched cover. 
\end{proposition}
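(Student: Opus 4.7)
The proof is the tautological correspondence between maps into the $g$-fold symmetric product and $g$-fold branched covers; the content is verifying that the correspondence preserves holomorphicity and the Lagrangian boundary conditions. I would organize the argument in three steps.

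\textbf{Constructing the correspondence.} Given a diagram as in~\eqref{eq:cylindrical}, I define $v\co \bD^2 \to \Sym^g(\Sigma)$ by
\[
v(p) = \sum_{q \in u_\bD^{-1}(p)} u_\Sigma(q),
\]
viewed as a degree-$g$ divisor (i.e., an unordered $g$-tuple with multiplicities). Since $u_\bD$ is a $g$-fold branched cover, $v$ is well defined and continuous. Conversely, given $v$ as in~\eqref{eq:non-cyl}, I form the fiber product
\[
S = \bigl\{(p,q) \in \bD^2 \times \Sigma : q \in v(p) \bigr\},
\]
where points are counted with the multiplicity with which they occur in $v(p)$, and let $u_\bD$ and $u_\Sigma$ be the projections. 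That these two constructions are mutually inverse (as set-level assignments) is immediate, and the boundary conditions translate directly: $v(\bdy\bD^2 \cap \{\Re(z)\geq 0\}) \subset T_\alpha$ is equivalent to $u_\Sigma(\bdy_a S) \subset \alphas$, and similarly for the $\beta$-side.

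\textbf{Holomorphicity away from the diagonal.} On the open subset of $\Sym^g(\Sigma)$ consisting of $g$-tuples of distinct points, the quotient map $\pi\co \Sigma^{\times g}\to \Sym^g(\Sigma)$ is a local biholomorphism for the split structure (this is precisely how $\Sym^g(j)$ is defined). Hence on the open part of $\bD^2$ mapping into this locus, $v$ locally lifts to a holomorphic map $\widetilde v\co U \to \Sigma^{\times g}$; the $g$ components of $\widetilde v$ furnish the $g$ sheets of $S$ together with the holomorphic map $u_\Sigma$ on each. In the other direction, given a diagram, the preimage of the non-diagonal locus gives $g$ disjoint holomorphic sections of $u_\bD$, which compose with $u_\Sigma$ and project through $\pi$ to yield $v$ holomorphic there.

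\textbf{The main obstacle: behavior at the diagonal.} The delicate part is to handle the branch locus, where $u_\bD$ is ramified and the image of $v$ lies in the big diagonal of $\Sym^g(\Sigma)$. Here one must verify (a) that the fiber product $S$ inherits a genuine Riemann surface structure at ramification points, (b) that the maps $u_\bD$, $u_\Sigma$ are holomorphic across these points, and (c) that conversely the pushforward of a holomorphic branched cover extends holomorphically across the diagonal (a priori it is only continuous). The standard way to do this is local: near a point of ramification of order $k$, choose a chart on $\Sigma$ near the multiply-covered point and use the identification
\[
\Sym^k(\CC) \;\xrightarrow{\ \cong\ }\; \CC^k, \qquad \{z_1,\dots,z_k\} \mapsto (e_1,\dots,e_k),
\]
via elementary symmetric polynomials, which is a biholomorphism for the split structure. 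In these coordinates $u_\bD$ is modeled on $w \mapsto w^k$ on a disk factor, and direct computation shows that a continuous map $\bD^2 \to \Sym^k(\CC) = \CC^k$ comes from a holomorphic branched cover if and only if it is holomorphic as a map to $\CC^k$. Combining this local model across all ramification points of $u_\bD$ (equivalently, across all points of $\bD^2$ where $v$ meets the diagonal) upgrades the two constructions from step one to mutually inverse correspondences between honest holomorphic objects, completing the proof.
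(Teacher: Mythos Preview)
Your proposal is correct and takes essentially the same approach as the paper's sketch: pushforward of divisors in one direction, and a fiber product in the other (which the paper phrases as pulling back the tautological branched cover $\Sigma\times\Sym^{g-1}(\Sigma)\to\Sym^g(\Sigma)$, an equivalent construction). Your local analysis at the diagonal via elementary symmetric functions supplies exactly the detail the paper waves off as ``fairly straightforward.''
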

\begin{proof}[Sketch of proof]
  Given a diagram of the form~\eqref{eq:cylindrical} we get a map $\bD^2\to \Sym^g(\Sigma)$ by sending a point $p\in\bD^2$ to $u_\Sigma(u_\bD^{-1}(p))$ (which is $g$ points in $\Sigma$, counted with multiplicity, or equivalently a point in $\Sym^g(\Sigma)$). To go the other way, note that there is a $g$-fold branched cover $\pi\co \Sigma\times \Sym^{g-1}(\Sigma)\to \Sym^g(\Sigma)$ gotten by forgetting the ordering between the $(g-1)$-tuple of points in $\Sigma$ and the one additional point. Given $v\co \bD^2\to \Sym^g(\Sigma)$ as in Formula~\eqref{eq:non-cyl} we can pull back the branched cover $\pi$ to get a branched cover $u_\bD\co S\to \bD^2$. The surface $S$ comes equipped with a map to $\Sigma\times \Sym^{g-1}(\Sigma)$, and projecting to $\Sigma$ gives $u_\Sigma$:
\[
\xymatrix{
& & \Sigma\\
S\ar[r]\ar[urr]^{u_\Sigma}\ar[d]^{u_\bD} & \Sigma\times \Sym^{g-1}(\Sigma)\ar[d]^\pi \ar[ur]_{\pi_\Sigma} & \\
 \bD^2\ar[r]^-v & \Sym^{g}(\Sigma).}
\]
It is fairly straightforward to prove that both constructions give
holomorphic maps (of the specified forms) and that the two
constructions are inverses of each other.
  See~\cite[Section 13]{Lipshitz06:CylindricalHF} for more details (though this idea is not due to me).
\end{proof}

\begin{lemma}\label{lem:decompose-cx}
  Let $(\Sigma,\alphas,\betas,P)$ be a sutured Heegaard diagram
  adapted to a decomposing surface. If $\y$ occurs as a term in $\bdy(\x)$ then $|\x\cap P|=|\y\cap P|$.
\end{lemma}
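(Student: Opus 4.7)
My plan is to prove this as a purely topological statement about the domain of any disk contributing to $\langle \bdy\x,\y\rangle$. Holomorphicity enters only through two standard properties of such a domain $D\in H_2(\Sigma,\alphas\cup\betas)$: its multiplicity vanishes in a neighborhood of $\bdy\Sigma$, and $\bdy(\bdy D\cap\alphas)=\y-\x$ as a $0$-chain in $\alphas$. Set $C:=\bdy D\cap\alphas$.

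First I would consider the truncation $C_P:=C\cap P$, a $1$-chain in $\Sigma$, and compute its boundary. There are two kinds of contributions: the points of $\bdy C=\y-\x$ that already lie in $P$, and the points where $C$ crosses $\bdy P$. Since $C\subset\alphas$ and the adapted-diagram condition $B\cap\alphas=\emptyset$ forces $C\cap B=\emptyset$, the only $\bdy P$-contributions come from $C\cap A$. Applying the augmentation map (which vanishes on any boundary) yields
\[ 0 \;=\; |\y\cap P| - |\x\cap P| + \varepsilon(C\cap A), \]
where $\varepsilon(C\cap A)$ is the signed algebraic intersection count of $C$ with $A$, each point weighted by the multiplicity in $C$ of the ambient local arc of $\alphas$.

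Next I would show $\varepsilon(C\cap A)=0$ by a flux argument along $A$. Parametrize $A$ and let $m_A(t)$ be the multiplicity of $D$ at $A(t)$, a locally constant function on the complement of $A\cap(\alphas\cup\betas)$. Since $\bdy A\subset\bdy\Sigma$ and $D$ has multiplicity zero near $\bdy\Sigma$, $m_A$ vanishes at both endpoints. Since the second adapted-diagram condition gives $A\cap\betas=\emptyset$, $m_A$ can only jump across points of $A\cap\alphas$; at each such point the jump equals (with sign determined by the intersection orientation) the multiplicity in $C$ of the corresponding local arc of $\alphas$. The net change of $m_A$ is therefore $\pm\varepsilon(C\cap A)$, and it equals $0-0=0$.

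The only step requiring real care is matching sign conventions in the flux argument with those in the augmentation computation, i.e., verifying that the jump in $m_A$ when crossing $\alpha$ transversely at $p$ has the same sign as the contribution of $p$ to $\varepsilon(C\cap A)$. This is a local check at each transverse intersection in $A\cap\alphas$ and follows routinely once the orientation conventions on $\bdy D$ and on $\Sigma$ are fixed. Notice that no holomorphic or symplectic input is used beyond (i) and (ii): the lemma is really a statement about $2$-chains on $\Sigma$, which is why the two defining disjointness conditions $A\cap\betas=B\cap\alphas=\emptyset$ of an adapted diagram suffice to force $|\x\cap P|=|\y\cap P|$.
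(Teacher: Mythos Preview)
Your argument is correct, and in fact it is precisely the ``direct'' proof the paper has in mind: the paper does not spell out a proof of this lemma but remarks that it ``follows from various results about $\SpinC$-structures, but it is also fairly easy to prove directly,'' and relegates it to an exercise. Your computation---restricting $C=\bdy D\cap\alphas$ to $P$, using $B\cap\alphas=\emptyset$ to eliminate $B$-crossings, and then running the multiplicity along $A$ using $A\cap\betas=\emptyset$ and the vanishing of $D$ near $\bdy\Sigma$---is exactly that direct argument. Note that, as the paper also observes, your proof actually yields the stronger statement that $|\x\cap P|=|\y\cap P|$ whenever any domain (not just a holomorphic one) connects $\x$ to $\y$, since you only use the two combinatorial properties of $D$ and never its holomorphicity.
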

Lemma~\ref{lem:decompose-cx} follows from various results about $\SpinC$-structures, but it is also fairly easy to prove directly; see Exercise~\ref{exercise:decomp-lemma}.  In fact, a slightly stronger statement holds: if there is a domain connecting $\x$ to $\y$ then $|\x\cap P|=|\y\cap P|$.

\begin{lemma}\label{lem:is-outer}\cite[Lemma 5.4]{Juhasz08:SuturedDecomp}
  With notation as in Lemma~\ref{lem:decompose-cx}, a generator $\x$ represents an outer $\SpinC$-structure if and only if $\x\cap P=\emptyset$.  \end{lemma}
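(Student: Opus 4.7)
The plan is to build a specific representative $v_\x$ of the $\SpinC$-structure $\spinc(\x)$ from Morse-theoretic data and analyze where it can equal $-\nu_S$ on $S = S(P)$.

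First I fix a self-indexing Morse function $f$ on $Y$ compatible with the Heegaard splitting (as in the proof of Theorem~\ref{thm:HD-exists}), so that the flow lines $\eta_i$ through $\x$ meet $\Sigma = f^{-1}(3/2)$ transversally at the points of $\x$. Choose disjoint tubular neighborhoods $N(\eta_i)$ and extend $\nabla f|_{Y\setminus\bigcup N(\eta_i)}$ to a non-vanishing vector field $v_\x$ on $Y$; by definition this represents $\spinc(\x)$. Orient $S$ so that $\nu_S$ agrees with $\nabla f/|\nabla f|$ on $P\subset\Sigma$. On the strips $A\times[1/2,1]$ and $B\times[0,1/2]$, the vector $\nabla f = \partial/\partial t$ is tangent to $S(P)$, and on $P$ it is parallel to $+\nu_S$; so $\nabla f$ itself is never equal to $-\nu_S$ on $S$. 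Because $\x\subset\alphas\cap\betas$ and $A\cap\betas = B\cap\alphas = \emptyset$, no point of $\x$ lies on $A$ or $B$. Shrinking the $N(\eta_i)$ further, each intersection $N(\eta_i)\cap S$ is either empty (when $x_i\notin P$) or a small disk $D_i\subset P$ centered on $x_i\in\x\cap P$. This immediately gives the easy direction: if $\x\cap P=\emptyset$ then $v_\x = \nabla f$ on all of $S$, so $v_\x\neq -\nu_S$ on $S$ and $\spinc(\x)\in O(S)$.

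For the converse I would convert ``is $\spinc(\x)$ outer?'' into an obstruction-theoretic statement. A representative $v$ of $\spinc(\x)$ avoids $-\nu_S$ on $S$ if and only if the section $v|_S$ of the unit tangent sphere bundle $UTY|_S$ can be homotoped into the complement of the section $-\nu_S$. The obstruction is an integer, namely the algebraic intersection number $v|_S\cdot(-\nu_S)$ inside the $4$-manifold $UTY|_S$, which can be repackaged as a relative first Chern class evaluation on $[S,\bdy S]$ and which depends only on $\spinc(\x)$. Since $v_\x = \nabla f \neq -\nu_S$ outside $\bigcup D_i$, all intersections occur inside the $D_i$. A local model near each $x_i\in\x\cap P$ -- using that $\bdy N(\eta_i)\cong S^2$ carries $\nabla f$ with total Poincar\'e-Hopf index $0$, that $v_\x|_{\bdy D_i}$ is pinned to $+\nu_S$, and that the flow line $\eta_i$ pierces $P$ transversally in the $+\nu_S$ direction -- yields local contribution $+1$ at each $x_i$. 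Summing, $v_\x|_S\cdot(-\nu_S) = |\x\cap P|>0$, so no representative of $\spinc(\x)$ avoids $-\nu_S$ on $S$.

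The main obstacle will be the local sign computation in the last step: one must use the orientation conventions of Definitions~\ref{def:decompose} and~\ref{def:adapted-HD} to check that the local intersection number at each $x_i\in\x\cap P$ really is $+1$, so that the contributions add rather than cancel in pairs. A cleaner alternative is to reformulate ``outer'' in terms of a relative first Chern class (as alluded to after Theorem~\ref{thm:Floer-decomp}, following \cite[Lemma 3.10]{Juhasz08:SuturedDecomp}); the lemma then reduces to the claim that the relative Chern class of $\spinc(\x)$, trivialized along $\bdy$ using $\nu_S$, differs from its ``outer'' value by a signed count of $\x\cap P$, which is a standard relative-Euler-class calculation on the Heegaard surface.
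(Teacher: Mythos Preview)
The paper does not prove this lemma; it simply states it and cites Juh\'asz. Your approach is in the same spirit as Juh\'asz's.

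Your easy direction is correct: once you observe that the flow lines $\eta_i$ miss $A$, $B$, and (by hypothesis) $P$, the representative $v_\x$ coincides with $\nabla f$ on all of $S(P)$, and $\nabla f$ visibly avoids $-\nu_S$ there.

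For the hard direction the framework is sound, but two steps deserve more than a clause. First, that the intersection number $v|_S\cdot(-\nu_S)$ depends only on $\spinc(\x)$: representatives of a fixed $\SpinC$-structure are homotopic on $Y\setminus B$ for some ball $B$ which may be taken disjoint from $S$, so the homotopy class of $v|_S$ is well-defined; since $TY$ is trivializable, the intersection number is then an honest homotopy invariant, and vanishes exactly when $v|_S$ can be pushed into the fiberwise-contractible complement of $-\nu_S$. Second, the local contribution, which you rightly flag as the crux. One way to pin it down: on the half of $N(\eta_i)$ below $D_i$, the field $\nabla f$ has a single zero (the index-$1$ critical point) of Poincar\'e--Hopf index $-1$, so $\nabla f$ on the boundary sphere of that half-ball has degree $-1$; since $\nabla f|_{D_i}$ is essentially the constant $+\nu_S$ and contributes $0$ to this count, the lower hemisphere alone carries degree $-1$. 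Replacing $\nabla f$ by the non-vanishing $v_\x$ makes the total boundary degree $0$, forcing $v_\x|_{D_i}$ to have degree $\pm 1$. The sign is uniform across the $x_i$ because the local model is identical at each, so the total is $\pm|\x\cap P|\neq 0$. Your suggested alternative via the relative first Chern class is exactly how Juh\'asz organizes the argument (his Lemma~3.10 feeding into Lemma~5.4), and is the cleaner route.
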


\begin{proof}[Proof of Theorem~\ref{thm:Floer-decomp}]
  Fix a Heegaard diagram $\HD=(\Sigma,\alphas,\betas,P)$ for $(Y,\Gamma)$ adapted to $S$.
  Let $\HD'=(\Sigma',\alphas',\betas')$ be the sutured Heegaard diagram obtained as follows. Topologically,
  \[
  \Sigma'=\bigl(\Sigma\setminus \interior(P)\bigr)\amalg P\amalg P/\sim
  \]
  where $\sim$ identifies the subset $A$ of $\bdy\bigl(\Sigma\setminus \interior(P)\bigr)$ with $A$ in the boundary of the first copy $P_A$ of $P$, and the subset $B$ of $\bdy\bigl(\Sigma\setminus \interior(P)\bigr)$ with $B$ in the boundary of the second copy $P_B$ of $P$. There is a projection map $\pi\co \Sigma'\to \Sigma$, which is $2$-to-$1$ on $P$ and $1$-to-$1$ elsewhere. There is a unique lift $\alphas'$ of the curves $\alphas$ from $\Sigma$ to $\Sigma'$: the lifted curves are disjoint from $P_B$. Similarly, there is a unique lift $\betas'$ of the curves $\betas$, and this lift is disjoint from $P_A$. See Figure~\ref{fig:decomp-HD}

\begin{figure}
  \centering
  \includegraphics{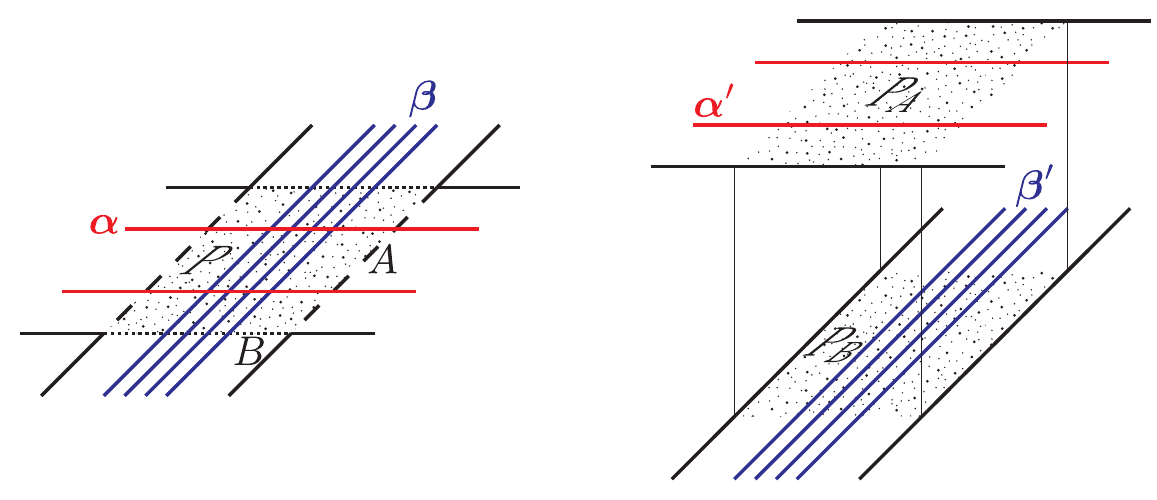}
  \caption{\textbf{A surface decomposition of a Heegaard diagram.} A piece of the original diagram is on the left and the corresponding piece of the decomposed diagram is on the right. The region $P$ and its two preimages $P_A$ and $P_B$ are speckled. The curve $A$ is dashed and $B$ is dotted. A more artistic version of this figure is~\cite[Figure 4]{Juhasz08:SuturedDecomp}.}
  \label{fig:decomp-HD}
\end{figure}

  Let 
  \[
  \SFC_P(\HD)=\langle \{\x\mid \x\cap P=\emptyset\}\rangle \subset\SFC(\HD).
  \]
  By Lemma~\ref{lem:decompose-cx}, $\SFC_P(\HD)$ is a direct summand of the chain complex $\SFC(\HD)$. 

  By Lemma~\ref{lem:is-outer}, an equivalent formulation of Theorem~\ref{thm:Floer-decomp} is:
  
  \emph{There is an isomorphism $\SFH_P(\HD)\cong \SFH(\HD').$}

  This statement has two advantages: it is more concrete, so we can prove it, and it does not make reference to $\SpinC$ structures, which we have not discussed much. It has the disadvantage that it is not intrinsic---it talks about diagrams, not sutured manifolds.

  Notice that $\alphas'\cap \betas'$ corresponds (via the projection $\pi$) to $\alphas\cap\betas\cap (\Sigma\setminus P)$. This induces an identification of generators between $\SFC_P(\HD)$ and $\SFC(\HD')$. We will show that for an appropriate choice of complex structure, this identification intertwines the differentials. (As usual, we are suppressing transversality issues and assuming we can work with split almost complex structures.)

Working in the cylindrical formulation from Proposition~\ref{prop:cylindrical}, suppose $\x$ and $\y$ are generators of $\SFC_P(\HD)$ and that $\y$ occurs in $\bdy(\x)$. Then there is a diagram $\bD\stackrel{u_\bD}{\longleftarrow} S\stackrel{u_\Sigma}{\longrightarrow}\Sigma$ as in Formula~\eqref{eq:cylindrical}. We want to produce a similar diagram, but in $\HD'$.

  The idea is to insert long necks in $\Sigma$ along $A$ and $B$, or equivalently, to pinch $A$ and $B$, decomposing  $\Sigma$  into two parts: $P/\bdy P$ and $\Sigma/P$. (The argument is similar to the first part of the argument in~\cite[Chapter 9]{LOT1}.) Consider a sequence of curves $u_i=(u_{\bD,i},u_{\Sigma,i})$ as above, with respect to a sequence of neck lengths converging to $\infty$.

  \emph{Claim 1.} As $A$ and $B$ collapse, one can find a subsequence of the $u_i$ so that:
  \begin{itemize}
  \item The surfaces $S_i$ converge to a nodal Riemann surface $S_\infty$.
  \item $S_\infty$ has two components, $S_\infty^P$ and $S_\infty^\Sigma$, attached at a collection of boundary points (nodes).
  \item The maps $u_i$ converge to holomorphic maps
    \begin{align*}
    u_{\bD,\infty}^\Sigma&\co S_\infty^\Sigma\to \bD^2 & 
    u_{\bD,\infty}^P&\co S_\infty^P\to \bD^2 \\ 
    u_{\Sigma,\infty}^\Sigma&\co S_\infty^\Sigma\to \Sigma/P & 
    u_{\Sigma,\infty}^P&\co S_\infty^P\to P/\bdy P.
    \end{align*}
  \item The maps $u_{\bD,\infty}^P$ and $u_{\bD,\infty}^\Sigma$ send each side of each node to the same point in $\bdy \bD^2$; that is, $u_{\bD,\infty}$ extends continuously over the nodes.
  \item At each node, $u_{\Sigma,\infty}^P$ and $u_{\Sigma,\infty}^\Sigma$ map to an arc between two $\alpha$- or $\beta$-circles and, further, both sides of the node map to the same such arc.
  \end{itemize}
  See Figure~\ref{fig:degen-curve} for a schematic example.

  \begin{figure}
    \centering
    \includegraphics{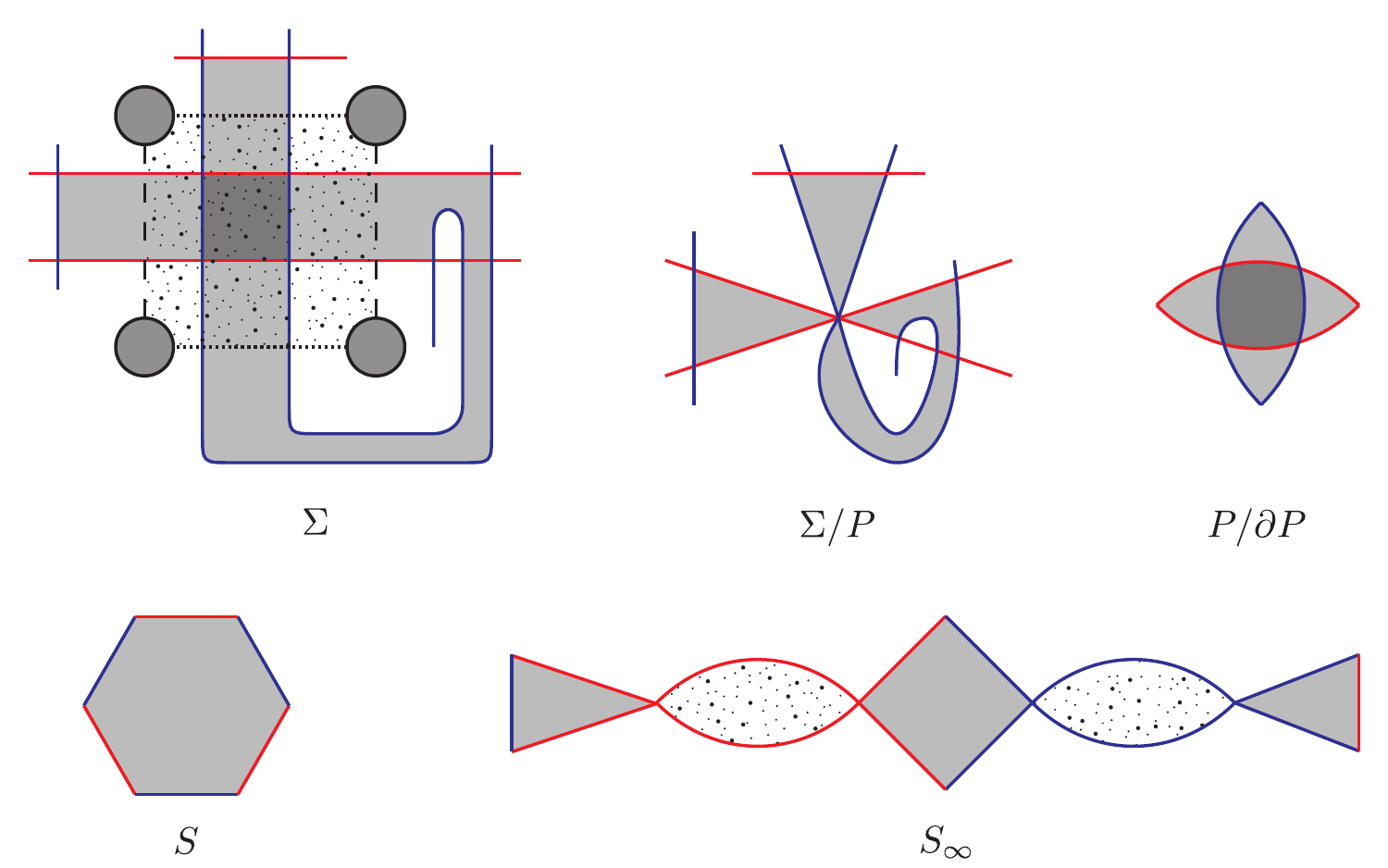}
    \caption{\textbf{A holomorphic curve after degeneration.} The domain of $u$ is shaded, and $P$ is speckled. In the domain of $u$, the darkly-shaded part is covered twice. In $P/\bdy P$, the four corners are identified. The conformal structure of $S$ is not (usually) the one indicated. In $S_\infty$, $S_\infty^\Sigma$ is shaded and $S_\infty^P$ is speckled.}
    \label{fig:degen-curve}
  \end{figure}

  Claim 1 is a version of Gromov's compactness theorem~\cite{Gromov85} (see also~\cite{BEHWZ03:CompactnessInSFT}), though the fact that we are considering maps between surfaces make it considerably easier than the general case.

  \emph{Claim 2.} Near any limiting surface as in Claim 1 there is a sequence of holomorphic curves converging to it.

  Claim 2 is called a gluing theorem. (Again, the fact that we are looking at maps between surfaces means this is a reasonably simple case.)

  Together, Claims 1 and 2 mean that we can use this degenerated surface to compute the differential on $\SFC_P$.

  \emph{Claim 3.} The surface $S_\infty^P$ consists of a disjoint union of bigons (disks with two boundary nodes). The map $u_{\Sigma,\infty}^P$ sends each bigon to a strip in $P$, with boundary either two $\alpha$-circles or two $\beta$-circles. The map $u_{\bD,\infty}^P$ is constant on each bigon.

  Notice that Claim 3 implies that $u_{\Sigma,\infty}^P$ and $u_{\bD,\infty}^P$ can be reconstructed from $u_{\Sigma,\infty}^\Sigma$ and $u_{\bD,\infty}^\Sigma$, and that $u_{\Sigma,\infty}^P$ and $u_{\bD,\infty}^P$ exist if and only if $u_{\Sigma,\infty}^\Sigma$ and $u_{\bD,\infty}^\Sigma$ satisfy certain easy-to-state properties (Exercise~\ref{ex:claim-3}).

  Similar results hold for holomorphic curves in $\Sigma'$, after collapsing the arcs $A$ and $B$ there. The difference is that we now have three components: $\Sigma/P$, $P_A/A$ and $P_B/B$. The analogue of Claim 3 says:

  \emph{Claim 3$'$.} Each of the surfaces $S_\infty^{P_A}$ and $S_\infty^{P_B}$ consists of a disjoint union of bigons. The map $u_{\Sigma,\infty}^{P_A}$ sends each bigon to a strip in $P_A$, with boundary on two $\alpha$-circles. The map $u_{\Sigma,\infty}^{P_B}$ sends each bigon to a strip in $P_B$, with boundary on two $\beta$-circles. The map $u_{\bD,\infty}^P$ is constant on each bigon.

  Again, Claim 3$'$ implies that the curves $u_{\Sigma,\infty}^{P_A}$, $u_{\bD,\infty}^{P_A}$, $u_{\Sigma,\infty}^{P_B}$ and $u_{\bD,\infty}^{P_B}$ can be reconstructed from $u_{\Sigma,\infty}^\Sigma$ and $u_{\bD,\infty}^\Sigma$. In particular, 
   there is an identification between the curves in Claim 3 and the curves in Claim 3$'$. Since we can use these degenerated curves to compute the differentials on $\SFC_P(\HD)$ and $\SFC(\HD')$, this completes the proof.
\end{proof}


\subsection{Some open questions}
Here are some questions about sutured Floer homology which I think are
open, and which I would find interesting to have answered. (Whether or
not anyone else would find them interesting I cannot say.)
\begin{enumerate}
\item Can one give a proof of Theorem~\ref{thm:detect-Thurston-norm}
  using sutured Floer homology? In this context~\cite[Section
  7.8]{KronheimerMrowka10:sutured} seems
  relevant.
\item Further explore the constructions in~\cite{AE:SFH-minus}, or other ``minus'' variants of sutured Floer homology.
\item What can be said about the next-to-outer $\SpinC$ structures? What topological information do they contain? (Perhaps the pairing theorem in~\cite{Zarev09:BorSut} is relevant, as might be~\cite{LOT:faith}.)
\end{enumerate}

\subsection{Suggested exercises}
\begin{enumerate}
\item Let $(\Sigma,\alphas,\betas,P)$ be a sutured Heegaard diagram
  adapted to a decomposing surface $S$. How does one compute the genus
  of $S=S(P)$?
\item Deduce Proposition~\ref{prop:product-decomp} from
  Theorem~\ref{thm:Floer-decomp}.
\item Convince yourself that Figure~\ref{fig:adapted-HD} represents
  the figure 8 knot, and that $S(P)$ is a minimal genus Seifert
  surface. (Hint: the figure 8 knot is fibered with monodromy
  $ab^{-1}$.) Give a sutured Heegaard diagram for the trefoil
  complement, adapted to a minimal genus Seifert surface.
\item What is the relationship between the proof of the ``easy
  direction'' of Theorem~\ref{thm:knot-genus}, i.e.,
  Proposition~\ref{prop:easy-direction}, that we gave and the proof
  in~\cite{OS04:Knots}? (That is, are the diagrams used
  in~\cite{OS04:Knots} examples of diagrams adapted to the Seifert
  surface $F$ in the sense of Definition~\ref{def:adapted-HD}?)
\item Prove that if $K$ is a fibered knot then $\HFK_*(K,-g(K))\cong
  \Field$ (i.e., the easy direction of Theorem~\ref{thm:fiberedness}).
\item Fill in the details in the proof of Proposition~\ref{prop:cylindrical}.
\item\label{exercise:decomp-lemma} Prove Lemma~\ref{lem:decompose-cx}.
\item\label{ex:claim-3} In the proof of Theorem~\ref{thm:Floer-decomp}, say precisely what properties $S_\infty^\Sigma$, $u_{\Sigma,\infty}^\Sigma$ and $u_{\bD,\infty}^\Sigma$ must satisfy for the surface $S_\infty^P$ and the maps $u_{\Sigma,\infty}^P$ and $u_{\bD,\infty}^P$ to exist. (See the discussion immediately after Claim 3.)
\end{enumerate}

\section{Miscellaneous further remarks}
\renewcommand{\thesec}{Lecture 4} 
\renewcommand{\sectitle}{Further remarks}
The main goal of this lecture is to draw some connections with the
lecture series on Khovanov homology. As a side benefit, I will mention
another nice applications of Heegaard Floer homology (mostly without
proof, unfortunately).

For most of this talk we will focus on the invariant
$\HFa(Y)=\SFH(Y\setminus \bD^3,\Gamma)$ associated to a closed $3$-manifold
$Y$, as in Section~\ref{sec:HFa}.

\subsection{Surgery exact triangle}
A \emph{framed knot} in a $3$-manifold $Y$ is a knot $K\subset Y$
together with a slope $n$ (isotopy class of essential simple closed
curves) on $\bdy \nbd(K)$. Given a framed knot $(K,n)$ we can do
\emph{surgery} on $(K,n)$ by gluing a thickened disk ($3$-dimensional
$2$-handle) to $Y\setminus\nbd(K)$ along $n$, and then capping the
resulting $S^2$ boundary component with a $\bD^3$. Let $Y_n(K)$ denote
the result of doing surgery to $Y$ along $(K,n)$.

Call a triple of slopes $(n,n',n'')$ in an oriented $T^2$ a
\emph{triad} if it is possible to orient $n$, $n'$, and $n''$ so that
their intersection numbers satisfy $n\cdot n'=n'\cdot n''=n''\cdot
n=-1$ (compare~\cite[Section 2]{BrDCov}).
%
%
Given a knot $K\subset Y$, orient $\bdy\nbd(K)=\bdy (Y\setminus
\nbd(K))$ as the boundary of $(Y\setminus \nbd(K))$.

\begin{theorem}\label{thm:surj-tri}\cite{OS04:HolDiskProperties}
  Let $(n,n',n'')$ be a triad of slopes in $\bdy(Y\setminus \nbd(K))$.
  Then there is an exact triangle
  \[
  \xymatrix{
     & \HFa(Y_n(K)) \ar[dr]& \\
     \ar[ur] \HFa(Y_{n''}(K)) & & \HFa(Y_{n'}(K)).\ar[ll] 
  }
  \]
\end{theorem}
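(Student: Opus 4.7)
The plan is to prove this via the standard holomorphic polygon machinery adapted to a Heegaard multi-diagram subordinate to the framed knot $K$. First I would build a pointed Heegaard diagram $(\Sigma,\alphas,\betas,\gammas,\deltas,z)$ for the exterior $Y\setminus\nbd(K)$ in which $\alphas$ is a $g$-tuple of attaching circles for one handlebody, while $\betas$, $\gammas$, $\deltas$ each are $g$-tuples that agree outside a small region near $\bdy\nbd(K)$ and differ there by a single curve representing the three slopes $n$, $n'$, $n''$ respectively. Concretely, one stabilizes so that the Heegaard surface contains $\bdy\nbd(K)$ as a sub-torus and takes the three ``surgery curves'' to be parallel to $n$, $n'$, $n''$. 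Then $(\Sigma,\alphas,\betas)$, $(\Sigma,\alphas,\gammas)$, $(\Sigma,\alphas,\deltas)$ are Heegaard diagrams for $Y_n(K)$, $Y_{n'}(K)$, $Y_{n''}(K)$, and the triad hypothesis forces each of $(\Sigma,\betas,\gammas)$, $(\Sigma,\gammas,\deltas)$, $(\Sigma,\betas,\deltas)$ to describe a connected sum $\#^{g-1}(S^1\times S^2)$, whose Floer homology has a distinguished top generator $\Theta_{\beta\gamma}$, $\Theta_{\gamma\delta}$, $\Theta_{\beta\delta}$.

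Next I would define the three candidate maps by counting pseudo-holomorphic triangles in $\Sym^g(\Sigma)$: for $\x\in T_\alpha\cap T_\beta$ set
\[
F_{\alpha\beta\gamma}(\x)=\sum_{\y\in T_\alpha\cap T_\gamma}\sum_{\psi}\bigl(\#\cM(\psi)\bigr)\,\y,
\]
where $\psi$ ranges over homotopy classes of Whitney triangles with vertices $\x$, $\Theta_{\beta\gamma}$, $\y$ of expected dimension zero, and similarly for $F_{\alpha\gamma\delta}$ and $F_{\alpha\delta\beta}$. That each $F$ is a chain map follows from the standard analysis of ends of one-dimensional moduli spaces of triangles: the only non-Gromov ends correspond to a degeneration splitting off a disk at one vertex, which exhibits $F$ as intertwining the differentials.

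The next step is to show each consecutive composition is null-homotopic. For this I would count holomorphic rectangles with corners at $\x$, $\Theta_{\beta\gamma}$, $\Theta_{\gamma\delta}$, and a point of $T_\alpha\cap T_\delta$, defining a chain homotopy $H_{\alpha\beta\gamma\delta}$; the ends of the one-dimensional rectangle moduli spaces give
\[
\partial H + H\partial \;=\; F_{\alpha\gamma\delta}\circ F_{\alpha\beta\gamma} \;+\; (\text{terms involving } \mu_3(\Theta_{\beta\gamma},\Theta_{\gamma\delta})),
\]
and the extra term vanishes because $(\Sigma,\betas,\gammas,\deltas)$ realizes a trivial triangle product on the canonical top generators (a direct calculation in $\#^g(S^1\times S^2)$). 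Doing this symmetrically for all three compositions produces three null-homotopies $H_1,H_2,H_3$.

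Finally, to upgrade the null-homotopic triangle into an exact triangle, I would invoke an algebraic lemma (the ``triangle detection lemma'' of Ozsv\'ath-Szab\'o): given the three chain maps and null-homotopies above, exactness follows provided that certain combinations of the $H_i$ and $F_j$ (a pentagon-type combination built from $2$-dimensional moduli spaces of holomorphic pentagons) induce quasi-isomorphisms on an appropriately filtered model. The way to verify this hypothesis is by a local model computation: one reduces, via a Mayer-Vietoris/stabilization argument, to the case where $\Sigma=T^2$ and the three curves are three slopes on $T^2$ forming a triad. In this toy case each chain complex is computable by hand, and the maps and homotopies can be checked explicitly to satisfy the hypothesis of the algebraic lemma. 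The \textbf{main obstacle} is precisely this last step: controlling the higher holomorphic polygons well enough to reduce the global statement to the genus-one model, which requires a careful combination of Gromov compactness for polygons, a stretching-the-neck argument along the torus $\bdy\nbd(K)\subset\Sigma$, and explicit identification of the model pentagon counts on $T^2$.
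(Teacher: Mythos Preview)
The paper does not supply its own proof of this theorem: it is stated with a citation to Ozsv\'ath--Szab\'o \cite{OS04:HolDiskProperties} and used as a black box, with only a remark that the argument extends to the sutured setting and that an alternative derivation goes through bordered Floer theory. So there is nothing to compare against beyond the original source.

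Your outline is essentially the standard Ozsv\'ath--Szab\'o holomorphic-polygon argument, and the architecture (multi-diagram, triangle maps, rectangle null-homotopies, triangle-detection lemma, reduction to a genus-one model) is correct. One point is inaccurately stated: in the null-homotopy for $F_{\alpha\gamma\delta}\circ F_{\alpha\beta\gamma}$, the extra rectangle end is the triangle map $F_{\alpha\beta\delta}\bigl(\,\cdot\,,F_{\beta\gamma\delta}(\Theta_{\beta\gamma},\Theta_{\gamma\delta})\bigr)$, and the relevant fact is not that the product $F_{\beta\gamma\delta}(\Theta_{\beta\gamma},\Theta_{\gamma\delta})$ \emph{vanishes} but that it is a \emph{boundary} in $\CFa(\#^{g-1}(S^2\times S^1))$ (it lies below the top generator for grading reasons). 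Likewise, what you call ``$\mu_3$'' is the triangle product, not a higher operation; the pentagon counts enter only in verifying the quasi-isomorphism hypothesis of the detection lemma, where the key computation is that the triple $\Theta_{\beta\gamma},\Theta_{\gamma\delta},\Theta_{\delta\beta'}$ (with $\beta'$ a translate of $\beta$) maps to the top generator $\Theta_{\beta\beta'}$. These are wording issues rather than gaps in the strategy.
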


In fact, the same result holds for $\HFa(Y)$ replaced by
$\SFH(Y,\Gamma)$ for any sutured $3$-manifold $(Y,\Gamma)$. The
original proof (from~\cite{OS04:HolDiskProperties}) extends to this
case. This also follows immediately from the exact triangle for
bordered solid tori~\cite[Section 11.2]{LOT1} together with Zarev's
bordered-sutured theory---particularly~\cite[Theorem
3.10]{Zarev:JoinGlue}.

There is always a distinguished slope for $K$, the \emph{meridian},
which bounds a disk in $\nbd(K)$. If $K\subset S^3$ then $K$ also has
a well-defined \emph{longitude}, a slope which is nullhomologous in
$S^3\setminus\nbd(K)$. For knots in $S^3$, therefore, we can identify
slopes with rational numbers, by declaring that $p/q$ corresponds to
$p$ times the meridian plus $q$ times the longitude.

\subsection{Lens space surgery}
Theorem~\ref{thm:surj-tri} has many applications. It is a central tool
in computations of 3-manifold invariants; see for
instance~\cite{JabukaMark08:surf-times-circ} for an intricate
example.

In a different direction, let us consider $3$-manifolds $Y$ for which $\HFa(Y)$ is trivial. First, we must decide what we mean by ``trivial''. To start, we have:
\begin{theorem}\label{thm:chi}
  \cite[Proposition 5.1]{OS04:HolDiskProperties} Given a torsion
  $\SpinC$ structure $\spinc$ on $Y$, $\chi(\HFa(Y,\spinc))=\pm 1$. In
  particular, for $Y$ a rational homology sphere (meaning $H_1(Y;\QQ)=0$), there is a choice of
  absolute $\ZZ/2\ZZ$ grading so that $\chi(\HFa(Y))=|H_1(Y)|$, the
  number of elements of $H_1(Y)$.
\end{theorem}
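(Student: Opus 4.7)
The plan is to compute $\chi(\HFa(Y,\spinc))$ directly from a pointed Heegaard diagram $(\Sigma,\alphas,\betas,z)$ for $Y\setminus\bD^3$ and identify it with a classical topological quantity. First, I would check that for torsion $\spinc$ the summand $\CFa(Y,\spinc)$ carries a well-defined relative $\ZZ$-grading, hence an absolute $\ZZ/2\ZZ$-grading. The ambiguity in the Maslov index of a domain $\phi\in\pi_2(\x,\y)$ between two generators is controlled by $\langle c_1(\spinc),[\phi]\rangle$ (really by the $z$-multiplicity weighted by $c_1$), which vanishes exactly when $\spinc$ is torsion. The parity of this absolute grading agrees with the local intersection sign of the generator $\x\in T_\alpha\cap T_\beta$ inside $\Sym^g(\Sigma)$: both $T_\alpha$ and $T_\beta$ are products of oriented circles, hence orientable, and after choosing orientations each $\x$ carries a sign $\sgn(\x)=\pm 1$. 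Because the differential shifts the $\ZZ/2\ZZ$-grading by one, the Euler characteristic is the signed count
\[
\chi(\HFa(Y,\spinc))=\sum_{\{\x\,:\,\spinc(\x)=\spinc\}}\sgn(\x).
\]

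Next I would sum over torsion spin-c structures. The total signed count equals the algebraic intersection number $T_\alpha\cdot T_\beta$ in $\Sym^g(\Sigma)$, which a standard linear-algebra computation (expanding the permutation sum defining $T_\alpha\cap T_\beta$ over the factors) identifies with $\det(\alpha_i\cdot\beta_j)$, the determinant of the matrix of algebraic intersection numbers of the attaching curves on $\Sigma$. For a rational homology sphere $Y$, applying Mayer--Vietoris to the Heegaard splitting $Y\setminus\bD^3=H_\alpha\cup_\Sigma H_\beta$ shows that $(\alpha_i\cdot\beta_j)$ is a presentation matrix for $H_1(Y)$; since $H_1(Y)$ is finite this presentation is square and $\lvert\det(\alpha_i\cdot\beta_j)\rvert=\lvert H_1(Y)\rvert$. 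Combined with $\lvert\SpinC(Y)\rvert=\lvert H^2(Y)\rvert=\lvert H_1(Y)\rvert$ from Lemma~\ref{spinc-torseur} (applied to $Y\setminus\bD^3$), this yields the ``in particular'' clause up to a single global sign, which I absorb into the choice of absolute $\ZZ/2\ZZ$-grading.

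The main obstacle is upgrading this total count to the per-spin-c assertion $\chi(\HFa(Y,\spinc))=\pm 1$. The natural route is to identify $\chi(\HFa(Y,\spinc))$ with the Turaev torsion $\tau(Y,\spinc)$, which for a torsion spin-c structure on a rational homology sphere is a unit. Concretely, I would realize the chain complex $\CFa(Y,\spinc)$ using a Heegaard diagram whose signed count of generators in the class $\spinc$ matches a Fox-calculus expression for $\tau(Y,\spinc)$: the algebraic intersection numbers refine to elements of $\ZZ[H_1(Y)]$, and evaluating at a character singles out one spin-c summand. The verification that this refinement really gives the Turaev torsion (and that each summand lands in $\{\pm 1\}$, not just that the total is $\pm\lvert H_1(Y)\rvert$) is where the genuine work sits; as a backup one can proceed inductively via the surgery exact triangle (Theorem~\ref{thm:surj-tri}), using additivity of Euler characteristics along exact triangles together with the observation that $Y=S^3$ is a trivial base case, provided one is careful about how the spin-c decomposition interacts with the triangle.
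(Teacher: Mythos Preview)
The paper does not prove this theorem; it is simply quoted with a citation to \cite[Proposition 5.1]{OS04:HolDiskProperties}, so there is no in-paper argument to compare against. What follows are comments on your sketch on its own terms.

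Your computation of the \emph{total} Euler characteristic is correct and is the standard one: identifying $\chi(\CFa(Y))$ with the algebraic intersection number $T_\alpha\cdot T_\beta=\pm\det(\alpha_i\cdot\beta_j)$ and recognizing that matrix as a presentation of $H_1(Y)$ gives $\chi(\HFa(Y))=\pm|H_1(Y)|$ for a rational homology sphere. That part is fine and is essentially how Ozsv\'ath--Szab\'o argue.

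The per-$\spinc$ statement is where your sketch wobbles. The claim that the Turaev torsion $\tau(Y,\spinc)$ of a rational homology sphere is a unit is not correct: $\tau(Y,\spinc)$ is a rational number, generally not $\pm1$ (think of lens spaces). Turaev torsion is what governs $\chi$ of $\HF^+$/$\HF^-$, not of $\HFa$. The actual mechanism behind $\chi(\HFa(Y,\spinc))=\pm1$ is the $\ZZ[H_1(Y)]$-refined intersection number you allude to: one shows that $\sum_{\x}\sgn(\x)\,[\spinc(\x)]\in\ZZ[H_1(Y)]$ equals $\pm\sum_{h\in H_1(Y)}h$, i.e.\ has coefficient $\pm1$ at every group element. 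This is checked by verifying invariance of the refined count (up to sign and translation) under Heegaard moves and then computing it in a model diagram; it does not pass through the Turaev torsion being a unit.

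Your backup via the surgery triangle is also problematic as stated. The maps in the exact triangle do not respect the $\SpinC$-decomposition in any simple way (spin$^c$ structures on the three manifolds are related through $\SpinC$ of the cobordisms, and a single $\spinc$ on $Y_n$ typically interacts with many on $Y_{n'}$), so additivity of Euler characteristics along the triangle does not immediately yield a per-$\spinc$ statement. One would need a much more careful bookkeeping of how the triangle splits, which is essentially as much work as the direct argument above.
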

Suppose $Y$ is a rational homology sphere. Saying $\HFa(Y)$ is trivial, then, should mean $\dim(\HFa(Y))=\chi(\HFa(Y))=|H_1(Y)|$. In this case, we say that $Y$ is an \emph{$L$-space}. The terminology comes from the fact that the lens spaces $L(p,q)$ are all $L$-spaces: $|H_1(L(p,q))|=p$ and $\HFa(L(p,q))\cong (\Field)^p$ (Exercise~\ref{exercise:lens}).

As a first application of Theorem~\ref{thm:surj-tri}, we have:
\begin{corollary}\label{cor:larger-surg}
  Let $S^3_n(K)$ denote $n$-surgery on the knot $K$. 
  If $S^3_n(K)$ is an $L$-space then so is $S^3_m(K)$ for any $m>n$.  
\end{corollary}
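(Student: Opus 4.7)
The plan is to induct on $m - n$, reducing to the case $m = n + 1$, and to apply Theorem~\ref{thm:surj-tri} to the triad of slopes $(n, n+1, \infty)$ on $\bdy\nbd(K)$, where $\infty$ denotes the meridian. In the meridian-longitude basis these slopes correspond to the primitive vectors $n\mu + \lambda$, $(n+1)\mu + \lambda$, and $\mu$; a direct $2\times 2$ determinant computation shows the three pairwise algebraic intersection numbers all have absolute value one, and a choice of orientations makes the three cyclic products equal to $-1$, verifying the triad condition. Since $\infty$-surgery on a knot in $S^3$ returns $S^3$, and $\HFa(S^3) \cong \Field$, the surgery exact triangle supplies an exact sequence
\[
\cdots \to \HFa(S^3_n(K)) \to \HFa(S^3_{n+1}(K)) \to \HFa(S^3) \to \HFa(S^3_n(K)) \to \cdots
\]

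The induction step now proceeds from two matching dimension bounds. An $L$-space is by definition a rational homology sphere, so the hypothesis forces $n \neq 0$; I will treat the case $n > 0$, which is the case of interest since the induction requires all subsequent $S^3_m(K)$ to be rational homology spheres as well. Under this hypothesis, $\dim_\Field \HFa(S^3_n(K)) = |H_1(S^3_n(K))| = n$, and exactness of the triangle yields the upper bound
\[
\dim_\Field \HFa(S^3_{n+1}(K)) \;\leq\; \dim \HFa(S^3_n(K)) + \dim \HFa(S^3) \;=\; n+1.
\]

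For the matching lower bound, $S^3_{n+1}(K)$ is a rational homology sphere with $|H_1| = n+1$, so every one of its $n+1$ $\SpinC$-structures is torsion and Theorem~\ref{thm:chi} gives $|\chi(\HFa(S^3_{n+1}(K),\spinc))| = 1$ for each $\spinc$. Summing the trivial dimension bound $\dim \HFa(S^3_{n+1}(K),\spinc) \geq |\chi(\HFa(S^3_{n+1}(K),\spinc))|$ over the $\SpinC$-decomposition gives
\[
\dim_\Field \HFa(S^3_{n+1}(K)) \;\geq\; \sum_{\spinc} |\chi(\HFa(S^3_{n+1}(K),\spinc))| \;=\; n+1.
\]
Equality therefore holds in both inequalities, so $S^3_{n+1}(K)$ is an $L$-space, completing the inductive step. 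There is no serious obstacle here; the argument is a clean combination of the exact-triangle upper bound and the universal lower bound $\dim H \geq |\chi(H)|$ from Theorem~\ref{thm:chi}. The only bookkeeping items are verifying the triad condition for $(n, n+1, \infty)$ and observing that $n > 0$ keeps the entire induction inside the realm of rational homology spheres.
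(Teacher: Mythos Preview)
Your proof is correct and follows exactly the paper's approach: induct to reduce to $m=n+1$, apply the surgery exact triangle to the triad $(n,n+1,\infty)$ for the upper bound $\dim\HFa(S^3_{n+1}(K))\leq n+1$, and invoke Theorem~\ref{thm:chi} for the matching lower bound. You supply more detail than the paper (the triad verification and the observation that $n>0$ is needed to keep the induction among rational homology spheres), but the argument is the same.
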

\begin{proof}
  By induction, it suffices to prove that $S^3_{n+1}(K)$ is an
  $L$-space.  Applying Theorem~\ref{thm:surj-tri} to the slopes $n$,
  $n+1$ and $\infty$, we see that $\dim\HFa(S^3_{n+1}(K))\leq
  n+1$. Theorem~\ref{thm:chi} gives the opposite inequality, proving
  the result.
\end{proof}

$L$-spaces are fairly rare, though many examples are known. (For example, the branched double cover of any alternating link is an $L$-space~\cite{BrDCov}; this follows from the techniques in Section~\ref{sec:dcov}.)

Via Theorem~\ref{thm:surj-tri} and its refinements (like the surgery formulas from~\cite{OS08:IntSurg,OS11:RatSurg}), one can give restrictions on which surgeries can yield $L$ spaces and, in particular, lens spaces. Perhaps the most dramatic example (so far) is a theorem of Kronheimer-Mrowka-Ozsv\'ath-Szab\'o, originally proved using monopole (Seiberg-Witten) Floer homology:

\begin{theorem}\label{thm:KMOS}
  \cite{KMOS} Suppose that for some $p/q\in\QQ$, $S^3_{p/q}(K)$ is
  orientation-preserving diffeomorphic to the lens space
  $L(p,q)$. Then $K$ is the unknot.
\end{theorem}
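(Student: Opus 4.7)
The plan is to show that if $S^3_{p/q}(K) \cong L(p,q)$, then $\HFKa(K) \cong \Field$ concentrated in Alexander grading $0$, at which point Theorem~\ref{thm:knot-genus} forces $g(K)=0$ and hence $K$ is the unknot. First, I would reduce to the case $p/q > 0$ by mirroring $K$ if necessary, since mirroring reverses the orientation on the surgered manifold and $\HFKa$ transforms predictably under mirroring. The hypothesis that $L(p,q)$ is an L-space then says that $K$ is an \emph{L-space knot}: it admits some positive L-space surgery. Via a refinement of Corollary~\ref{cor:larger-surg} to rational slopes (or directly from the rational surgery formula), this already implies that $S^3_n(K)$ is an L-space for every sufficiently large integer $n$.

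Second, I would invoke the structure theorem for L-space knots, which is a consequence of the integer surgery formula~\cite{OS08:IntSurg}: if $K$ is an L-space knot then $\HFKa(K,j)$ is either $0$ or $\Field$ for every $j$, and the Alexander gradings $j$ with $\HFKa(K,j) \neq 0$ form a symmetric sequence $n_k > n_{k-1} > \cdots > n_{-k}$ with $n_i = -n_{-i}$. Equivalently, these $n_i$ are exactly the exponents in the Conway-normalized Alexander polynomial $\Delta_K(t)$, whose nonzero coefficients alternate between $+1$ and $-1$, and $\CFKm(K)$ is a ``staircase'' complex entirely determined by this sequence.

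Third, and this is the technical heart, I would apply the rational surgery formula~\cite{OS11:RatSurg} to the staircase $\CFKm(K)$ and match the resulting computation of $\HFa(S^3_{p/q}(K))$, $\SpinC$-structure by $\SpinC$-structure, against the known computation of $\HFa(L(p,q))$ (one copy of $\Field$ in each of the $p$ $\SpinC$-structures, with absolute $\QQ$-grading prescribed by the recursive formula for lens space $d$-invariants). The comparison reduces to the statement that a certain sequence of nonnegative integer correction invariants $V_s = V_s(K)$ extracted from the staircase must coincide with the $d$-invariants of $L(p,q)$ in each $\SpinC$-class.

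The main obstacle is executing this last comparison uniformly in $(p,q)$. Here one exploits the symmetry $n_i = -n_{-i}$ together with monotonicity and jump properties of the $V_s$-sequence to argue inductively that the $V_s$ must all vanish, forcing the staircase to collapse to a single generator. This yields $\Delta_K(t) = 1$, so $\HFKa(K,j) = 0$ for every $j \neq 0$, and Theorem~\ref{thm:knot-genus} then gives $g(K)=0$, so $K$ is the unknot.
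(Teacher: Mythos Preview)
The paper does not actually prove this theorem; it is cited from \cite{KMOS} and only a brief strategic sketch is offered in the surrounding text. That sketch, for integer slopes $q=1$, runs by \emph{downward} induction: assuming $S^3_p(K)=L(p,1)$, one shows $S^3_{p-1}(K)=L(p-1,1)$ using either the absolute $\QQ$-grading or the integer surgery formula, and the induction terminates at $p=0$, which is Gabai's property~$R$ theorem. Your route is genuinely different: you aim to show directly that $g(K)=0$ via $d$-invariant matching and the rational surgery formula, and then invoke Theorem~\ref{thm:knot-genus}. This is a legitimate alternative, and it trades Gabai's property~$R$ as a black box for the genus-detection theorem (which itself rests on Gabai's sutured hierarchies). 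The paper's inductive approach stays closer to the original KMOS argument; yours is more uniform in $p/q$ and ties back to the main theme of these notes.

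That said, your final paragraph is too vague to stand as written, and there are two concrete issues to address. First, the ``$\SpinC$-structure by $\SpinC$-structure'' matching is not automatic: the diffeomorphism $S^3_{p/q}(K)\cong L(p,q)$ may permute $\SpinC$-structures, so you only know the \emph{multisets} of $d$-invariants agree. The clean fix is to sum. The surgery formula gives $d(S^3_{p/q}(K),i)=d(L(p,q),i)-2N_i$ with each $N_i\geq 0$ built from the $V_s$; summing both sides over all $i$ and comparing with the sum of the $d(L(p,q),j)$ forces $\sum_i N_i=0$, hence every $N_i=0$. Second, to conclude that \emph{all} $V_s$ vanish (not just those that happen to appear for the given slope), you need to know that enough of them are seen. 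This is where the bound $p/q\geq 2g(K)-1$, valid because $K$ is an $L$-space knot and $S^3_{p/q}(K)$ is an $L$-space, enters: it guarantees the $N_i$ involve $V_0,\dots,V_{g-1}$. With those two points in place the ``inductive'' argument via symmetry and jump properties is unnecessary, and the staircase collapses immediately.
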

Many cases of Theorem~\ref{thm:KMOS} were already known; see the introduction
to~\cite{KMOS} for a discussion of the history.

Note that there are nontrivial knots $K$ in $S^3$ admitting lens space surgeries. For instance, $S^3_{pq+1}(T_{p,q})=L(pq+1,q^2)$. A number of other knots (called \emph{Berge knots}) are known to have lens space surgeries, and many others have $L$-space surgeries (see, for instance,~\cite{HLV:Berge-Gabai}, and its references). So, the following is \emph{false}: if $\HFa(S^3_{p/q}(K))\cong \HFa(S^3_{p/q}(U))$ then $K=U$. In particular, the proof of Theorem~\ref{thm:KMOS} needs (at least) one more ingredient.

In Theorem~\ref{thm:KMOS}, the case of $0$-surgeries is called the \emph{property $R$ conjecture}, and was proved by Gabai~\cite{Gabai87:foliations-2-3}. So, to prove Theorem~\ref{thm:KMOS} for $q=1$, say, it suffices to show that if $S^3_p(K)=L(p,1)$ then $S^3_{p-1}(K)=L(p-1,1)$. This is the opposite direction of induction from Corollary~\ref{cor:larger-surg}.

To accomplish this downward induction, one can either use the \emph{absolute $\QQ$-grading} on $\HFa(Y)$, as in the original proof of Theorem~\ref{thm:KMOS} or, for a quicker proof, the surgery formula from~\cite{OS08:IntSurg}. (In fact, the latter proof is sufficiently simple that Theorem~\ref{thm:KMOS} makes a good exercise when learning the surgery formula.)

Theorem~\ref{thm:KMOS} is one of everyone's favorite applications of low-dimensional Floer theories, and so gets mentioned a lot. For some other striking applications to lens space surgeries, which are fairly accessible from the discussion in these lectures, see for instance~\cite{OS05:surgeries}.

\subsection{The spectral sequence for the branched double cover}\label{sec:dcov}
As another application of the surgery exact triangle (and related
techniques), we discuss a relationship between Heegaard Floer
homology and Khovanov homology:
\begin{theorem}\label{thm:s-seq}
  \cite[Theorem 1.1]{BrDCov} For any link $L$ in $S^3$ there is a spectral
  sequence $\rKh(m(L))\Rightarrow \HFa(\Sigma(L))$.
\end{theorem}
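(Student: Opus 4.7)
The plan is to construct a filtered chain complex modeled on the cube of resolutions of a diagram of $L$ whose total homology is $\HFa(\Sigma(L))$ and whose $E_2$ page recovers $\rKh(m(L))$. Fix a diagram $D$ of $L$ with $n$ crossings, and for each vertex $v \in \{0,1\}^n$ let $U_v$ be the complete resolution of $D$ prescribed by $v$. This is an unlink with some number $k(v)$ of components, and its branched double cover is $\Sigma(U_v) \cong \#^{k(v)-1}(S^2\times S^1)$. A calculation due to Ozsv\'ath--Szab\'o identifies $\HFa$ of this manifold with the exterior algebra on $k(v)-1$ generators, matching, as a vector space, the value $V^{\otimes(k(v)-1)} = \rKh(U_v)$ of the reduced Khovanov TQFT on $U_v$.

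The next input is the surgery triangle. At each crossing $c$ of $D$ one identifies an unknot $K_c \subset \Sigma(U_v)$ (the preimage of a short arc across $c$) such that the $0$-resolution, $1$-resolution, and unresolved crossing correspond to three slopes on $\bdy\nbd(K_c)$ forming a triad in the sense of Theorem~\ref{thm:surj-tri}. Using the resulting exact triangles along each edge of $\{0,1\}^n$, the plan is to assemble a hypercube of Heegaard multi-diagrams and build a chain complex
\[
X \;=\; \bigoplus_{v\in\{0,1\}^n} \CFa\bigl(\Sigma(U_v)\bigr), \qquad d \;=\; \sum_{v < v'} d_{v,v'},
\]
where $v \leq v'$ in $\{0,1\}^n$ and $d_{v,v'}$ counts holomorphic $(|v'-v|+2)$-gons in the corresponding multi-diagram. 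The length-one components $d_{v,v'}$ are the edge maps from the surgery exact triangles, while the longer-length diagonal maps are the corrections needed to force $d^2=0$, and they arise from the standard combinatorics of degenerations of holomorphic polygons. An induction on $n$, collapsing one coordinate of the cube at a time and using the mapping cone description of the exact triangle, shows that $H_*(X,d) \cong \HFa(\Sigma(L))$.

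Filtering $X$ by the cube grading $|v|$ then yields a spectral sequence converging to $\HFa(\Sigma(L))$ whose $E_1$ page is $\bigoplus_v \HFa(\Sigma(U_v))$ with $d_1$ the edge maps. By the vector-space identification of the first paragraph this $E_1$ page is isomorphic to the reduced Khovanov complex of $L$ after swapping the roles of the two resolutions at every crossing---equivalently, to the reduced Khovanov complex of $m(L)$. Identifying $d_1$ with Khovanov's differential then reduces to checking that each surgery-cobordism map $\HFa(\Sigma(U_v)) \to \HFa(\Sigma(U_{v'}))$, induced by a $3$-dimensional $2$-handle attachment along $K_c$, corresponds under the above identification to Khovanov's merge or split map; this is a direct computation in $\#^{k(v)}(S^2\times S^1)$.

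The hard part will be the construction in the second paragraph: producing admissible Heegaard multi-diagrams that simultaneously realize all the required surgery triads, controlling the higher polygon counts well enough to ensure $d^2=0$, and showing inductively that the resulting iterated mapping cone computes $\HFa(\Sigma(L))$. The identification of $d_1$ with Khovanov's TQFT maps is also non-trivial, but once the correct bases on $\HFa(\#^{k-1}(S^2\times S^1))$ are in hand it is essentially a finite-dimensional linear-algebra check, so I expect the real obstacle to be the hypercube construction rather than the final identification.
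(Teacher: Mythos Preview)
Your outline is correct and matches the paper's approach essentially point for point: both build the link surgery spectral sequence by realizing the cube of resolutions as an iterated mapping cone of surgery maps (with higher polygon counts supplying the homotopies), identify the vertex groups with $\HFa\bigl(\#^{k(v)-1}(S^2\times S^1)\bigr)\cong(\Field\oplus\Field)^{\otimes(k(v)-1)}$, and then check that the $d_1$ differential agrees with the reduced Khovanov differential for $m(L)$. The only cosmetic difference is that the paper phrases the hypercube construction via homotopy-commutative squares and short exact sequences rather than directly in terms of polygon counts, but these are the same data.
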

Here, $\rKh(m(L))$ denotes the reduced Khovanov homology of the mirror
of $L$. The manifold $\Sigma(L)$ is the double cover of $S^3$ branched
along $L$. That is, the meridians of $L$ define a canonical
isomorphism $H_1(S^3\setminus\nbd(L))\cong \ZZ^{|L|}$. The composition
\[
\pi_1(S^3\setminus\nbd(L))\to H_1(S^3\setminus\nbd(L))=\ZZ^{|L|}\to\ZZ/2\ZZ,
\]
where the last map sends each basis vector (i.e., meridian) to $1$,
defines a connected double cover $\tilde{Y}$ of $S^3\setminus\nbd(L)$. The
boundary of $\tilde{Y}$ is a union of tori. Each of these tori has a
distinguished meridian---the total preimage of a meridian of the
corresponding component of $L$. Filling in these meridians with
thickened disks and filling the resulting $S^2$ boundary components with
$\bD^3$'s gives the double cover of $S^3$ branched along $L$.

Theorem~\ref{thm:s-seq} has received a lot of
attention; see~\cite[Section 1.2]{LOT:DCov1} for references to related
work. In particular, Kronheimer-Mrowka later used similar ideas to prove that Khovanov homology detects the unknot~\cite{KronheimerMrowka11:detect}.

\begin{proof}[Sketch of Proof of Theorem~\ref{thm:s-seq}]
  \begin{figure}
    \centering
    \includegraphics{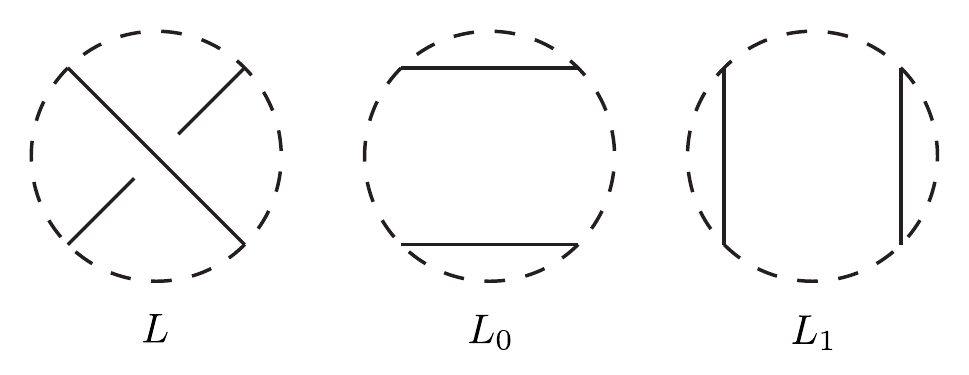}
    \caption{\textbf{Resolutions of a crossing.} The conventions agree
    with~\cite{Khovanov00:CatJones}, not~\cite{BrDCov}.}
    \label{fig:resolutions}
  \end{figure}
  The relationship between Theorem~\ref{thm:s-seq} and
  Theorem~\ref{thm:surj-tri} comes from the following observation: let
  $L$ be a link diagram, $c$ a crossing in $L$, and $L_0$ and $L_1$
  the two resolutions of $L$ at $c$, as in
  Figure~\ref{fig:resolutions}. Let $\gamma$ be the vertical arc in
  $\RR^3$ with boundary on $L$ lying above $c$. The total preimage
  $\widetilde{\gamma}$ of $\gamma$ in $\Sigma(L)$ is a circle
  $K$. There are surgery slopes $n$ and $n'$ on $\bdy \nbd(K)$ so that
  $n$ (respectively $n'$) surgery on $K$ gives $L_1$ (respectively
  $L_0$), and $(\infty,n,n')$ satisfy the conditions of
  Theorem~\ref{thm:surj-tri}. Thus, Theorem~\ref{thm:surj-tri} gives a
  long exact sequence
  \[
  \cdots\to \HFa(\Sigma(L))\to \HFa(\Sigma(L_1))\to
  \HFa(\Sigma(L_0))\to \HFa(\Sigma(L))\to\cdots.
  \]
  There is an analogous skein sequence for Khovanov homology. This skein relation does not characterize Khovanov homology but, as we will see, it almost implies the existence of a spectral sequence of the desired form.
  
  To proceed, we actually need a slight strengthening of
  Theorem~\ref{thm:surj-tri}: with notation as in that theorem, there
  is a short exact sequence of chain complexes
  \begin{equation}\label{eq:ses}
  0\to \CFa(Y_{n'}(K))\to \CFa(Y_{n''}(K))\to \CFa(Y_n(K))\to 0
  \end{equation}
  (for appropriately chosen Heegaard diagrams). Further, this surgery
  triangle is local in the following sense. Fix two disjoint knots
  $K$ and $L$ in $Y$, and framings $m,m',m''$ for $K$ and $n,n',n''$ for
  $K'$ as in Theorem~\ref{thm:surj-tri}. Let $Y_{m,n}(K\cup L)$ be
  the result of performing $m$ surgery on $K$ and $n$ surgery on $L$. Then there is a
  homotopy-commutative diagram
  \begin{equation}\label{eq:big-square}
  \xymatrix{
     & 0\ar[d] & 0\ar[d] & 0\ar[d] & \\
     0\ar[r] & \CFa(Y_{m',n'}(K\cup L))\ar[r]^{f_{00,10}}\ar[d]^{f_{00,01}} &
     \CFa(Y_{m'',n'}(K\cup L))\ar[r]\ar[d]^{f_{10,11}} & \CFa(Y_{m,n'}(K\cup
     L))\ar[r]\ar[d] & 0\\
     0\ar[r] & \CFa(Y_{m',n''}(K\cup L))\ar[r]^{f_{01,11}}\ar[d] &
     \CFa(Y_{m'',n''}(K\cup L))\ar[r]\ar[d] & \CFa(Y_{m,n''}(K\cup
     L))\ar[r]\ar[d] & 0\\
     0\ar[r] & \CFa(Y_{m',n}(K\cup L))\ar[r]\ar[d] &
     \CFa(Y_{m'',n}(K\cup L))\ar[r]\ar[d] & \CFa(Y_{m,n}(K\cup
     L))\ar[r]\ar[d] & 0\\
    & 0 & 0 & 0 &
  }
  \end{equation}
  where the rows and columns are the exact sequences of
  Equation~\ref{eq:ses}. Here, homotopy-commutative means, for
  instance, that there is a map $f_{00,11}\co \CFa(Y_{m',n'}(K\cup
  L))\to \CFa(Y_{m'',n''}(K\cup L))$ so that 
  \[
  \bdy\circ f_{00,11}+f_{00,11}\circ \bdy = f_{01,11}\circ
  f_{00,01}+f_{10,11}\circ f_{00,10}.
  \]
  The natural analogue holds for surgeries on a link of $n>2$
  components, as well.

  As an algebraic corollary, we have the fact that 
  \begin{equation}\label{eq:iterated-cone}
  \CFa(Y_{m,n}(K\cup L)) \simeq \Cone\left(
    \mathcenter{\xymatrix{\CFa(Y_{m',n'}(K\cup L))\ar[r]^{f_{00,10}}\ar[d]^{f_{00,01}} \ar[dr]^{f_{00,11}}&
     \CFa(Y_{m'',n'}(K\cup L))\ar[d]^{f_{10,11}}\\
     \CFa(Y_{m',n''}(K\cup L))\ar[r]^{f_{01,11}} &
     \CFa(Y_{m'',n''}(K\cup L))}}
  \right);
  \end{equation}
  see Exercise~\ref{exercise:cone}. Here, the $\Cone$ means that we
  take the whole diagram and view it as a complex. That is, take the
  direct sum of the complexes at the four vertices, and use the
  differentials on the complexes and maps between them to define a
  differential. For instance, if $x\in \CFa(Y_{m',n'}(K\cup L))$ then
  the differential of $x$ is given by
  $\bdy_\CF(x)+f_{00,10}(x)+f_{00,01}(x)+f_{00,11}(x)$. (If we were
  not working over $\Field$, there would be some signs.) Again, the
  analogous results hold for a link with more than two components.

  Notice that the complex in Formula~\ref{eq:iterated-cone} has an
  obvious filtration. The terms in the associated graded complex are
  $\CFa(Y_{m',n'}(K\cup L))$, $\CFa(Y_{m'',n'}(K\cup L))\oplus
  \CFa(Y_{m',n''}(K\cup L))$, and $\CFa(Y_{m'',n''}(K\cup L))$. Thus,
  there is a spectral sequence with $E^1$ term given by 
  \[
  \HFa(Y_{m',n'}(K\cup L)) \oplus\HFa(Y_{m'',n'}(K\cup L))\oplus
  \HFa(Y_{m',n''}(K\cup L))\oplus\HFa(Y_{m'',n''}(K\cup L))
  \]
  converging to $\HFa(Y_{m,n}(K\cup L))$. This is called the
  \emph{link surgery spectral sequence}.

  Returning to the branched double cover, suppose that $L$ has $k$
  crossings. Consider the link $K$ in $\Sigma(L)$ corresponding to the
  $k$ crossings, as in the first paragraph of the proof. The surgery
  spectral sequence corresponding to this link has $E^\infty$-page
  $\HFa(\Sigma(L))$. It remains to identify the $E^2$-page with
  Khovanov homology. In fact, the $E^1$-page is identified with the
  reduced Khovanov complex. At the level of vertices, notice that the
  Floer group corresponding to each vertex is the branched double
  cover of an unlink. Hence, by Exercise~\ref{exercise:unlink-HF}, if
  the unlink has $\ell$ components then this branched double cover has
  $\HFa$ given by $(\Field\oplus\Field)^{\otimes (n-1)}$, in agreement
  with the corresponding term in the reduced Khovanov
  complex. Identifying the differential on the $E^1$-page is then a
  fairly short computation; see~\cite{BrDCov}. (The arrows are exactly
  backwards from the usual Khovanov differential, which is the reason
  for the $m(L)$ in the statement of the theorem.)
\end{proof}

\subsection{Suggested exercises}
\begin{enumerate}
\item Corollary~\ref{cor:larger-surg} holds for rational surgeries, as well. Prove it.
\item\label{exercise:cone} Prove Formula~\eqref{eq:iterated-cone} (assuming Formula~\eqref{eq:big-square}).
\item\label{exercise:unlink-HF} Show that the branched double cover of an $n$-component unlink
  in $S^3$ is the connected sum of $(n-1)$ copies of $S^2\times
  S^1$. Deduce that the branched double cover has $\HFa$ given by
  $(\Field\oplus\Field)^{\otimes (n-1)}$.
\end{enumerate}


\renewcommand{\thesec}{Bibliography}
\renewcommand{\sectitle}{}
\bibliographystyle{hamsalpha}
\bibliography{heegaardfloer}

\newcommand{\etalchar}[1]{$^{#1}$}
\providecommand{\bysame}{\leavevmode\hbox to3em{\hrulefill}\thinspace}
\providecommand{\href}[2]{#2}
\providecommand{\eprint}{\begingroup \urlstyle{rm}\Url}
\begin{thebibliography}{KMO{\relax Sz}07}

\bibitem[AE11]{AE:SFH-minus}
Akram~S. Alishahi and Eaman Eftekhary, \emph{A refinement of sutured {F}loer
  homology}, 2011, \eprint{arXiv:1112.3540}.

\bibitem[BEH{\etalchar{+}}03]{BEHWZ03:CompactnessInSFT}
Fr{\'e}d{\'e}ric Bourgeois, Yakov Eliashberg, Helmut Hofer, Kris Wysocki, and
  Eduard Zehnder, \emph{Compactness results in symplectic field theory}, Geom.
  Topol. \textbf{7} (2003), 799--888, \eprint{arXiv:math.SG/0308183}.

\bibitem[CGH12a]{ColinGhigginiHonda11:HF-ECH-3}
Vincent Colin, Paolo Ghiggini, and Ko~Honda, \emph{The equivalence of
  {H}eegaard {F}loer homology and embedded contact homology {III}: from hat to
  plus}, 2012, \eprint{arXiv:1208.1526}.

\bibitem[CGH12b]{ColinGhigginiHonda11:HF-ECH-1}
\bysame, \emph{The equivalence of {H}eegaard {F}loer homology and embedded
  contact homology via open book decompositions {I}}, 2012,
  \eprint{arXiv:1208.1074}.

\bibitem[CGH12c]{ColinGhigginiHonda11:HF-ECH-2}
\bysame, \emph{The equivalence of {H}eegaard {F}loer homology and embedded
  contact homology via open book decompositions {II}}, 2012,
  \eprint{arXiv:1208.1077}.

\bibitem[Eft05]{Eftekhary05:LongitudeWhitehead}
Eaman Eftekhary, \emph{Longitude {F}loer homology and the {W}hitehead double},
  Algebr. Geom. Topol. \textbf{5} (2005), 1389--1418,
  \eprint{arXiv:math.GT/0407211}.

\bibitem[Gab83]{Gabai83:foliations}
David Gabai, \emph{Foliations and the topology of {$3$}-manifolds}, J.
  Differential Geom. \textbf{18} (1983), no.~3, 445--503.

\bibitem[Gab84]{Gabai84:knot-foliations}
\bysame, \emph{Foliations and genera of links}, Topology \textbf{23} (1984),
  no.~4, 381--394.

\bibitem[Gab86]{Gabai86:arborescent}
\bysame, \emph{Genera of the arborescent links}, Mem. Amer. Math. Soc.
  \textbf{59} (1986), no.~339, i--viii and 1--98.

\bibitem[Gab87]{Gabai87:foliations-2-3}
\bysame, \emph{Foliations and the topology of {$3$}-manifolds. {III}}, J.
  Differential Geom. \textbf{26} (1987), no.~3, 479--536.

\bibitem[Ghi08]{Ghiggini08:FiberedGenusOne}
Paolo Ghiggini, \emph{Knot {F}loer homology detects genus-one fibred knots},
  Amer. J. Math. \textbf{130} (2008), no.~5, 1151--1169.

\bibitem[Gro85]{Gromov85}
M.~Gromov, \emph{Pseudoholomorphic curves in symplectic manifolds}, Invent.
  Math. \textbf{82} (1985), no.~2, 307--347.

\bibitem[GW10]{GrigsbyWehrli10:naturality}
J.~Elisenda Grigsby and Stephan~M. Wehrli, \emph{On the naturality of the
  spectral sequence from {K}hovanov homology to {H}eegaard {F}loer homology},
  Int. Math. Res. Not. IMRN (2010), no.~21, 4159--4210.

\bibitem[HKM09a]{HKM09:Sutured}
Ko~Honda, William~H. Kazez, and Gordana Mati{\'c}, \emph{The contact invariant
  in sutured {F}loer homology}, Invent. Math. \textbf{176} (2009), no.~3,
  637--676.

\bibitem[HKM09b]{HKM09:contact}
\bysame, \emph{On the contact class in {H}eegaard {F}loer homology}, J.
  Differential Geom. \textbf{83} (2009), no.~2, 289--311.

\bibitem[HLV14]{HLV:Berge-Gabai}
Jennifer Hom, Tye Lidman, and Faramarz Vafaee, \emph{Berge-{G}abai knots and
  {L}-space satellite operations}, 2014, \eprint{arXiv:1406.1597}.

\bibitem[HN10]{HeddenNi10:small}
Matthew Hedden and Yi~Ni, \emph{Manifolds with small {H}eegaard {F}loer ranks},
  Geom. Topol. \textbf{14} (2010), no.~3, 1479--1501.

\bibitem[HT07]{HutchingsTaubes07:GluingI}
Michael Hutchings and Clifford~Henry Taubes, \emph{Gluing pseudoholomorphic
  curves along branched covered cylinders. {I}}, J. Symplectic Geom. \textbf{5}
  (2007), no.~1, 43--137.

\bibitem[HT09]{HutchingsTaubes09:GluingII}
\bysame, \emph{Gluing pseudoholomorphic curves along branched covered
  cylinders. {II}}, J. Symplectic Geom. \textbf{7} (2009), no.~1, 29--133.

\bibitem[Hut02]{Hutchings02:ECH}
Michael Hutchings, \emph{An index inequality for embedded pseudoholomorphic
  curves in symplectizations}, J. Eur. Math. Soc. (JEMS) \textbf{4} (2002),
  no.~4, 313--361.

\bibitem[JM08]{JabukaMark08:surf-times-circ}
Stanislav Jabuka and Thomas~E. Mark, \emph{On the {H}eegaard {F}loer homology
  of a surface times a circle}, Adv. Math. \textbf{218} (2008), no.~3,
  728--761.

\bibitem[JT12]{JT:Naturality}
Andr{\'a}s Juh{\'a}sz and Dylan~P. Thurston, \emph{Naturality and mapping class
  groups in {H}eegaard {F}loer homology}, 2012, \eprint{arXiv:1210.4996}.

\bibitem[Juh06]{Juhasz06:Sutured}
Andr{\'a}s Juh{\'a}sz, \emph{Holomorphic discs and sutured manifolds}, Algebr.
  Geom. Topol. \textbf{6} (2006), 1429--1457, \eprint{arXiv:math/0601443}.

\bibitem[Juh08]{Juhasz08:SuturedDecomp}
\bysame, \emph{Floer homology and surface decompositions}, Geom. Topol.
  \textbf{12} (2008), no.~1, 299--350, \eprint{arXiv:math/0609779}.

\bibitem[Juh09]{Juhasz:Cobordism}
\bysame, \emph{Cobordisms of sutured manifolds}, 2009,
  \eprint{arXiv:0910.4382}.

\bibitem[Juh13]{Juhasz:Survey}
\bysame, \emph{A survey of {H}eegaard {F}loer homology}, 2013,
  \eprint{arXiv:1310.3418}.

\bibitem[Kho00]{Khovanov00:CatJones}
Mikhail Khovanov, \emph{A categorification of the {J}ones polynomial}, Duke
  Math. J. \textbf{101} (2000), no.~3, 359--426.

\bibitem[KLT10a]{KutluhanLeeTaubes:HFHMI}
{\c{C}}a\u{g}atay Kutluhan, Yi-Jen Lee, and Clifford~Henry Taubes,
  \emph{{$\HF=\HM$} {I}: {H}eegaard {F}loer homology and {S}eiberg--{W}itten
  {F}loer homology}, 2010, \eprint{arXiv:1007.1979}.

\bibitem[KLT10b]{KutluhanLeeTaubes:HFHMII}
\bysame, \emph{{$\HF=\HM$} {II}: {R}eeb orbits and holomorphic curves for the
  ech/{H}eegaard-{F}loer correspondence}, 2010, \eprint{1008.1595}.

\bibitem[KLT10c]{KutluhanLeeTaubes:HFHMIII}
\bysame, \emph{{$\HF=\HM$} {III}: {H}olomorphic curves and the differential for
  the ech/{H}eegaard {F}loer correspondence}, 2010, \eprint{arXiv:1010.3456}.

\bibitem[KLT11]{KutluhanLeeTaubes:HFHMIV}
\bysame, \emph{{$\HF=\HM$} {IV}: The {S}eiberg-{W}itten {F}loer homology and
  ech correspondence}, 2011, \eprint{arXiv:1107.2297}.

\bibitem[KLT12]{KutluhanLeeTaubes:HFHMV}
{\c {C}}a\u{g}atay Kutluhan, Yi-Jen Lee, and Clifford~Henry Taubes,
  \emph{{$\HF=\HM$} {V}: {S}eiberg-{W}itten {F}loer homology and handle
  additions}, 2012, \eprint{arXiv:1204.0115}.

\bibitem[KM07]{KronheimerMrowka}
Peter Kronheimer and Tomasz Mrowka, \emph{Monopoles and three-manifolds}, New
  Mathematical Monographs, vol.~10, Cambridge University Press, Cambridge,
  2007.

\bibitem[KM10]{KronheimerMrowka10:sutured}
\bysame, \emph{Knots, sutures, and excision}, J. Differential Geom. \textbf{84}
  (2010), no.~2, 301--364, \eprint{arXiv:0807.4891}.

\bibitem[KM11]{KronheimerMrowka11:detect}
P.~B. Kronheimer and T.~S. Mrowka, \emph{Khovanov homology is an
  unknot-detector}, Publ. Math. Inst. Hautes \'Etudes Sci. (2011), no.~113,
  97--208, \eprint{arXiv:1005.4346}.

\bibitem[KMO{\relax Sz}07]{KMOS}
Peter Kronheimer, Tomasz Mrowka, Peter~S. Ozsv{\'a}th, and Zolt{\'a}n {\relax
  Sz}ab{\'o}, \emph{Monopoles and lens space surgeries}, Ann. of Math. (2)
  \textbf{165} (2007), no.~2, 457--546, \eprint{arXiv:math.GT/0310164}.

\bibitem[Lip06]{Lipshitz06:CylindricalHF}
Robert Lipshitz, \emph{A cylindrical reformulation of {H}eegaard {F}loer
  homology}, Geom. Topol. \textbf{10} (2006), 955--1097,
  \eprint{arXiv:math.SG/0502404}.

\bibitem[LOSS09]{LOSS09:transverse}
Paolo Lisca, Peter Ozsv{\'a}th, Andr{\'a}s~I. Stipsicz, and Zolt{\'a}n
  Szab{\'o}, \emph{Heegaard {F}loer invariants of {L}egendrian knots in contact
  three-manifolds}, J. Eur. Math. Soc. (JEMS) \textbf{11} (2009), no.~6,
  1307--1363.

\bibitem[LOT08]{LOT1}
Robert Lipshitz, Peter~S. Ozsv{\'a}th, and Dylan~P. Thurston, \emph{Bordered
  {H}eegaard {F}loer homology: {I}nvariance and pairing}, 2008,
  \eprint{arXiv:0810.0687v4}.

\bibitem[LOT10]{LOT:DCov1}
\bysame, \emph{Bordered {F}loer homology and the spectral sequence of a
  branched double cover {I}}, 2010, \eprint{arXiv:1011.0499}.

\bibitem[LOT11]{LOT:tour}
\bysame, \emph{Tour of bordered {F}loer theory}, PNAS \textbf{108} (2011),
  no.~20, 8085--8092.

\bibitem[LOT12]{LOTnotes}
\bysame, \emph{Notes on bordered {F}loer homology}, 2012,
  \eprint{arXiv:1211.6791}.

\bibitem[LOT13]{LOT:faith}
Robert Lipshitz, Peter Ozsv{\'a}th, and Dylan Thurston, \emph{A faithful
  linear-categorical action of the mapping class group of a surface with
  boundary}, J. Eur. Math. Soc. (JEMS) \textbf{15} (2013), no.~4, 1279--1307.

\bibitem[Man14]{Manolescu:knot-survey}
Ciprian Manolescu, \emph{An introduction to knot floer homology}, 2014,
  \eprint{arXiv:1401.7107}.

\bibitem[Mil63]{Milnor63:MorseTheory}
J.~Milnor, \emph{Morse theory}, Based on lecture notes by M. Spivak and R.
  Wells. Annals of Mathematics Studies, No. 51, Princeton University Press,
  Princeton, N.J., 1963.

\bibitem[Mil65]{Milnor65:h-cobordism}
John Milnor, \emph{Lectures on the {$h$}-cobordism theorem}, Notes by L.
  Siebenmann and J. Sondow, Princeton University Press, Princeton, N.J., 1965.

\bibitem[MOS09]{MOS06:CombinatorialDescrip}
Ciprian Manolescu, Peter Ozsv{\'a}th, and Sucharit Sarkar, \emph{A
  combinatorial description of knot {F}loer homology}, Ann. of Math. (2)
  \textbf{169} (2009), no.~2, 633--660, \eprint{arXiv:math.GT/0607691}.

\bibitem[MW95]{MicallefWhite95:intersection-positivity}
Mario~J. Micallef and Brian White, \emph{The structure of branch points in
  minimal surfaces and in pseudoholomorphic curves}, Ann. of Math. (2)
  \textbf{141} (1995), no.~1, 35--85.

\bibitem[Ni07]{Ni07:FiberedKnot}
Yi~Ni, \emph{Knot {F}loer homology detects fibred knots}, Invent. Math.
  \textbf{170} (2007), no.~3, 577--608.

\bibitem[Ni09]{Ni09:FiberedMfld}
\bysame, \emph{Heegaard {F}loer homology and fibred 3-manifolds}, Amer. J.
  Math. \textbf{131} (2009), no.~4, 1047--1063.

\bibitem[OS{\relax Sz}]{OSS:book}
Peter Ozsv{\'a}th, Andr{\'a}s Stipsicz, and Zolt{\'a}n {\relax Sz}ab{\'o}, In
  preparation.

\bibitem[OS{\relax Sz}09]{OSS09:singular}
\bysame, \emph{Floer homology and singular knots}, J. Topol. \textbf{2} (2009),
  no.~2, 380--404.

\bibitem[O{\relax Sz}03]{AbsGraded}
Peter~S. Ozsv{\'a}th and Zolt{\'a}n {\relax Sz}ab{\'o}, \emph{Absolutely graded
  {F}loer homologies and intersection forms for four-manifolds with boundary},
  Adv. Math. \textbf{173} (2003), no.~2, 179--261,
  \eprint{arXiv:math.SG/0110170}.

\bibitem[O{\relax Sz}04a]{OS04:GenusBounds}
\bysame, \emph{Holomorphic disks and genus bounds}, Geom. Topol. \textbf{8}
  (2004), 311--334, \eprint{arXiv:math.GT/0311496}.

\bibitem[O{\relax Sz}04b]{OS04:Knots}
\bysame, \emph{Holomorphic disks and knot invariants}, Adv. Math. \textbf{186}
  (2004), no.~1, 58--116, \eprint{arXiv:math.GT/0209056}.

\bibitem[O{\relax Sz}04c]{OS04:HolDiskProperties}
\bysame, \emph{Holomorphic disks and three-manifold invariants: properties and
  applications}, Ann. of Math. (2) \textbf{159} (2004), no.~3, 1159--1245,
  \eprint{arXiv:math.SG/0105202}.

\bibitem[O{\relax Sz}04d]{OS04:HolomorphicDisks}
\bysame, \emph{Holomorphic disks and topological invariants for closed
  three-manifolds}, Ann. of Math. (2) \textbf{159} (2004), no.~3, 1027--1158,
  \eprint{arXiv:math.SG/0101206}.

\bibitem[O{\relax Sz}05a]{OS05:Contact}
\bysame, \emph{Heegaard {F}loer homology and contact structures}, Duke Math. J.
  \textbf{129} (2005), no.~1, 39--61.

\bibitem[O{\relax Sz}05b]{OS05:EMS-survey}
\bysame, \emph{On {H}eegaard diagrams and holomorphic disks}, European
  {C}ongress of {M}athematics, Eur. Math. Soc., Z\"urich, 2005, pp.~769--781.

\bibitem[O{\relax Sz}05c]{OS05:surgeries}
\bysame, \emph{On knot {F}loer homology and lens space surgeries}, Topology
  \textbf{44} (2005), no.~6, 1281--1300.

\bibitem[O{\relax Sz}05d]{BrDCov}
\bysame, \emph{On the {H}eegaard {F}loer homology of branched double-covers},
  Adv. Math. \textbf{194} (2005), no.~1, 1--33, \eprint{arXiv:math.GT/0309170}.

\bibitem[O{\relax Sz}06a]{OS06:HolDiskFour}
\bysame, \emph{Holomorphic triangles and invariants for smooth four-manifolds},
  Adv. Math. \textbf{202} (2006), no.~2, 326--400,
  \eprint{arXiv:math.SG/0110169}.

\bibitem[O{\relax Sz}06b]{OS06:Clay1}
\bysame, \emph{An introduction to {H}eegaard {F}loer homology}, Floer homology,
  gauge theory, and low-dimensional topology, Clay Math. Proc., vol.~5, Amer.
  Math. Soc., Providence, RI, 2006, pp.~3--27.

\bibitem[O{\relax Sz}06c]{OS06:Clay2}
\bysame, \emph{Lectures on {H}eegaard {F}loer homology}, Floer homology, gauge
  theory, and low-dimensional topology, Clay Math. Proc., vol.~5, Amer. Math.
  Soc., Providence, RI, 2006, pp.~29--70.

\bibitem[O{\relax Sz}08a]{OS05:HFL}
\bysame, \emph{Holomorphic disks, link invariants and the multi-variable
  {A}lexander polynomial}, Algebr. Geom. Topol. \textbf{8} (2008), no.~2,
  615--692, \eprint{arXiv:math/0512286}.

\bibitem[O{\relax Sz}08b]{OS08:IntSurg}
\bysame, \emph{Knot {F}loer homology and integer surgeries}, Algebr. Geom.
  Topol. \textbf{8} (2008), no.~1, 101--153.

\bibitem[O{\relax Sz}09]{OS09:cube}
\bysame, \emph{A cube of resolutions for knot {F}loer homology}, J. Topol.
  \textbf{2} (2009), no.~4, 865--910.

\bibitem[O{\relax Sz}11]{OS11:RatSurg}
\bysame, \emph{Knot {F}loer homology and rational surgeries}, Algebr. Geom.
  Topol. \textbf{11} (2011), no.~1, 1--68.

\bibitem[O{\relax Sz}T08]{OST}
Peter~S. Ozsv{\'a}th, Zolt{\'a}n {\relax Sz}ab{\'o}, and Dylan~P. Thurston,
  \emph{Legendrian knots, transverse knots and combinatorial {F}loer homology},
  Geom. Topol. \textbf{12} (2008), no.~2, 941--980,
  \eprint{arXiv:math.GT/0611841}.

\bibitem[Per08]{Perutz07:HamHand}
Timothy Perutz, \emph{Hamiltonian handleslides for {H}eegaard {F}loer
  homology}, Proceedings of {G}\"okova {G}eometry-{T}opology {C}onference 2007,
  G\"okova Geometry/Topology Conference (GGT), G\"okova, 2008, pp.~15--35.

\bibitem[Ras03]{Rasmussen03:Knots}
Jacob Rasmussen, \emph{Floer homology and knot complements}, Ph.D. thesis,
  Harvard University, Cambridge, MA, 2003, \eprint{arXiv:math.GT/0306378}.

\bibitem[RH11]{GrippHuang:plane-fields}
Vinicius Gripp~Barros Ramos and Yang Huang, \emph{An absolute grading on
  {H}eegaard {F}loer homology by homotopy classes of oriented 2-plane fields},
  2011, \eprint{arXiv:1112.0290}.

\bibitem[RH12]{GH:bord-gr}
\bysame, \emph{A topological grading on bordered {H}eegaard {F}loer homology},
  2012, \eprint{arXiv:1211.7367}.

\bibitem[Sch89]{Scharlemann89:sutured}
Martin Scharlemann, \emph{Sutured manifolds and generalized {T}hurston norms},
  J. Differential Geom. \textbf{29} (1989), no.~3, 557--614.

\bibitem[SW10]{SarkarWang07:ComputingHFhat}
Sucharit Sarkar and Jiajun Wang, \emph{An algorithm for computing some
  {H}eegaard {F}loer homologies}, Ann. of Math. (2) \textbf{171} (2010), no.~2,
  1213--1236, \eprint{arXiv:math/0607777}.

\bibitem[Tau10a]{Taubes10:ECH-SW1}
Clifford~Henry Taubes, \emph{Embedded contact homology and {S}eiberg-{W}itten
  {F}loer cohomology {I}}, Geom. Topol. \textbf{14} (2010), no.~5, 2497--2581.

\bibitem[Tau10b]{Taubes10:ECH-SW2}
\bysame, \emph{Embedded contact homology and {S}eiberg-{W}itten {F}loer
  cohomology {II}}, Geom. Topol. \textbf{14} (2010), no.~5, 2583--2720.

\bibitem[Tau10c]{Taubes10:ECH-SW3}
\bysame, \emph{Embedded contact homology and {S}eiberg-{W}itten {F}loer
  cohomology {III}}, Geom. Topol. \textbf{14} (2010), no.~5, 2721--2817.

\bibitem[Tau10d]{Taubes10:ECH-SW4}
\bysame, \emph{Embedded contact homology and {S}eiberg-{W}itten {F}loer
  cohomology {IV}}, Geom. Topol. \textbf{14} (2010), no.~5, 2819--2960.

\bibitem[Tau10e]{Taubes10:ECH-SW5}
\bysame, \emph{Embedded contact homology and {S}eiberg-{W}itten {F}loer
  cohomology {V}}, Geom. Topol. \textbf{14} (2010), no.~5, 2961--3000.

\bibitem[Thu86]{Thurston86:norm}
William~P. Thurston, \emph{A norm for the homology of {$3$}-manifolds}, Mem.
  Amer. Math. Soc. \textbf{59} (1986), no.~339, i--vi and 99--130.

\bibitem[Tur97]{Turaev97:spinc}
Vladimir Turaev, \emph{Torsion invariants of {${\rm Spin}^c$}-structures on
  {$3$}-manifolds}, Math. Res. Lett. \textbf{4} (1997), no.~5, 679--695.

\bibitem[Zar09]{Zarev09:BorSut}
Rumen Zarev, \emph{Bordered {F}loer homology for sutured manifolds}, 2009,
  \eprint{arXiv:0908.1106}.

\bibitem[Zar10]{Zarev:JoinGlue}
\bysame, \emph{Joining and gluing sutured {F}loer homology}, 2010,
  \eprint{arXiv:1010.3496}.

\end{thebibliography}
\end{document}